\theoremstyle{plain}
\newtheorem{theorem}{Theorem}[section]
\newtheorem*{theoremA}{Theorem A}
\newtheorem*{theoremB}{Theorem B}
\newtheorem{lemma}[theorem]{Lemma}
\newtheorem{corollary}[theorem]{Corollary}
\theoremstyle{definition}
\newtheorem*{definitionnonum}{Definition}
\theoremstyle{remark}
\newtheorem{remark}{Remark}
\theoremstyle{plain}
\numberwithin{equation}{section}
\newcommand{\mR}{\mathbb{R}}                    
\newcommand{\e}{\text{e}}                       
\newcommand{\eps}{\varepsilon}                  
\newcommand{\der}{\text{d}}
\newcommand{\abs}[1]{\lvert{#1} \rvert}          
\newcommand{\aabs}[1]{\left\lvert{#1} \right\rvert}
\newcommand{\absg}[1]{\lvert{#1} \rvert_g}
\newcommand{\absgs}[1]{\lvert{#1} \rvert_{\sasaki{g}}} 
\newcommand{\norm}[1]{\lVert{#1} \rVert}         
\newcommand{\anorm}[1]{\left\lVert{#1}\right\rVert} 
\newcommand{\br}[1]{\abr{#1}}                   
\newcommand{\abr}[1]{\langle#1 \rangle}        
\newcommand{\babr}[1]{\big\langle#1 \big\rangle}
\newcommand{\brg}[1]{\langle#1 \rangle_g}
\newcommand{\brgs}[1]{\langle#1 \rangle_{\sasaki{g}}}
\newcommand{\mdef}{\coloneqq}                   
\newcommand{\comp}{\circ}                       
\newcommand\restr[2]{{
  \left.\kern-\nulldelimiterspace
  #1 
  \vphantom{\big|} 
  \right|_{#2} 
}}
\def\vint_#1{\mathchoice%
      {\mathop{\kern0.2em\vrule width 0.6em height 0.69678ex depth -0.58065ex
              \kern-0.8em \intop}\nolimits_{\kern-0.4em#1}}%
      {\mathop{\kern0.1em\vrule width 0.5em height 0.69678ex depth -0.60387ex
              \kern-0.6em \intop}\nolimits_{#1}}%
      {\mathop{\kern0.1em\vrule width 0.5em height 0.69678ex depth -0.60387ex
              \kern-0.6em \intop}\nolimits_{#1}}%
      {\mathop{\kern0.1em\vrule width 0.5em height 0.69678ex depth -0.60387ex
              \kern-0.6em \intop}\nolimits_{#1}}}
\def\vintslides_#1{\mathchoice%
      {\mathop{\kern0.1em\vrule width 0.5em height 0.697ex depth -0.581ex
              \kern-0.6em \intop}\nolimits_{\kern-0.4em#1}}%
      {\mathop{\kern0.1em\vrule width 0.3em height 0.697ex depth -0.604ex
              \kern-0.4em \intop}\nolimits_{#1}}%
      {\mathop{\kern0.1em\vrule width 0.3em height 0.697ex depth -0.604ex
              \kern-0.4em \intop}\nolimits_{#1}}%
      {\mathop{\kern0.1em\vrule width 0.3em height 0.697ex depth -0.604ex
              \kern-0.4em \intop}\nolimits_{#1}}}
\newcommand{\aveint}[2]{\mathchoice%
      {\mathop{\kern0.2em\vrule width 0.6em height 0.69678ex depth -0.58065ex
              \kern-0.8em \intop}\nolimits_{\kern-0.45em#1}^{#2}}%
      {\mathop{\kern0.1em\vrule width 0.5em height 0.69678ex depth -0.60387ex
              \kern-0.6em \intop}\nolimits_{#1}^{#2}}%
      {\mathop{\kern0.1em\vrule width 0.5em height 0.69678ex depth -0.60387ex
              \kern-0.6em \intop}\nolimits_{#1}^{#2}}%
      {\mathop{\kern0.1em\vrule width 0.5em height 0.69678ex depth -0.60387ex
              \kern-0.6em \intop}\nolimits_{#1}^{#2}}}
\newcommand{\doo}{\partial}
\DeclareMathOperator{\diver}{div}
\DeclareMathOperator{\Vol}{Vol}                 
\newcommand{\lv}{\mathtt{v}}
\newcommand{\lh}{\mathtt{h}}
\newcommand{\ls}{\mathtt{s}}
\newcommand{\vnabla}{\overset{\lv}{\nabla}}
\newcommand{\hnabla}{\overset{\lh}{\nabla}}
\newcommand{\vbundle}{\mathcal{V}}
\newcommand{\hbundle}{\mathcal{H}}
\newcommand{\vdiver}{\overset{v}{\diver}}
\newcommand{\sasaki}[1]{{{#1}_{\ls}}}
\newcommand{\nablasm}{{\nabla_{SM}}}
\newcommand{\K}{\mathcal{K}}
\newcommand{\esc}[1]{\mathcal{E}_{{#1}}}
\newcommand{\hflow}[2]{\phi_{#1,#2}^{\lh}}
\newcommand{\vflow}[2]{\phi_{#1,#2}^{\lv}}
\newcommand{\valueat}[1]{\Big|_{#1}}
\newcommand{\cm}{K_\nabla} 
\newcommand{\ball}[2]{B_{{#1}} ({#2})}
\newcommand{\sphere}[2]{S_{{#1}} ({#2})}
\newcommand{\middletext}[1]{\quad\text{#1}\quad}
\newcommand{\setsep}{\,;}
\newcommand{\set}[2]{\{ {#1} \setsep {#2} \}}
\newcommand{\tf}[1]{S^{#1}}
\renewcommand{\abs}[1]{\left|#1\right|}
\title{Tensor tomography on Cartan-Hadamard manifolds}
\author{Jere Lehtonen}
\address{Department of Mathematics and Statistics, University of Jyv\"askyl\"a, P.O. BOX 35 (MaD) FI-40014 University of Jyv\"askyl\"a, Finland}
\email{jere.ta.lehtonen@jyu.fi}
\author{Jesse Railo}
\email{jesse.t.railo@jyu.fi}
\author{Mikko Salo}
\email{mikko.j.salo@jyu.fi}
\date{} 
\begin{document}

\begin{abstract}
We study the geodesic X-ray transform on Cartan-Hadamard manifolds, generalizing the X-ray transforms on Euclidean and hyperbolic spaces that arise in medical and seismic imaging. We prove solenoidal injectivity of this transform acting on functions
and tensor fields of any order. The functions are assumed to be exponentially
decaying if the sectional curvature is bounded, and polynomially decaying
if the sectional curvature decays at infinity. This work extends the results
of~\cite{Leh16} to dimensions $n \geq 3$ and to the case of tensor fields
of any order.
\end{abstract}

\maketitle

\section{Introduction}\label{sec:introduction}

\subsection{Motivation} 

This article considers the geodesic X-ray transform on noncompact Riemannian
manifolds. This transform encodes the integrals of a function $f$, where $f$
satisfies suitable decay conditions at infinity, over all geodesics.
In the case of Euclidean space the geodesic X-ray transform is just the usual
X-ray transform involving integrals over all lines, and in two dimensions it coincides with the Radon transform introduced in the seminal work of Radon in 1917~\cite{Rad17}.

X-ray and Radon type transforms in Euclidean space are widely used as mathematical models for medical and industrial imaging methods, such as CT, PET, SPECT and MRI (see \cite{Natterer}). In these applications one is interested in reconstructing unknown coefficients in a bounded region. However, it is often convenient to model the problems in terms of compactly supported functions in the noncompact space $\mR^n$, which makes it possible to use Fourier transform based methods for instance.

Another important class of imaging problems arises in geophysics, when determining interior properties of the Earth from acoustic scattering or earthquake measurements. In these problems one encounters X-ray transforms over general families of curves, which often correspond to geodesic curves of a sound speed profile within the Earth. Moreover, if the sound speed is anisotropic (depends on direction), then one needs to consider geodesic X-ray transforms of tensor fields \cite{Sha94}. A typical feature is that rays originating near the Earth surface eventually curve back to the surface. A simple mathematical model, which has been used as a first approximation for this behaviour, is to think of the domain as embedded in hyperbolic space $\mathbb{H}^n$ and to consider the geodesic X-ray transform in $\mathbb{H}^n$ \cite{Bal2005}. The hyperbolic geodesic X-ray transform also appears in Electrical Impedance Tomography in connection with the method of Barber and Brown \cite{Berenstein} and in partial data problems \cite{KenigSalo}.

Another setting where X-ray transforms on noncompact manifolds appear is inverse scattering theory (for instance in quantum mechanics, acoustics, or electromagnetics). The connection between scattering theory and Radon type transforms goes back at least to Lax and Phillips \cite{LaxPhillips}, and the X-ray transform of a scattering potential can be determined from measurements of the full scattering amplitude at high frequencies (see e.g.\ \cite{Weder2014}). The X-ray transforms that appear in these contexts are often Euclidean. However, in inverse scattering applications related to general relativity and black holes one encounters more general manifolds that resemble asymptotically hyperbolic ones \cite{JoshiSaBarreto}, and in recent results on phaseless inverse scattering problems more general geodesic X-ray transforms also arise (see \cite{Klibanov2017} and references therein). We remark that both in quantum mechanics and general relativity, the functions that one would like to reconstruct are often not compactly supported and thus it is important to deal with noncompact manifolds.

In this article we will study the invertibility of geodesic X-ray transforms on noncompact Riemannian manifolds. Our results will include Euclidean and hyperbolic space as special cases, but will apply to more general manifolds with nonpositive curvature (Cartan-Hadamard manifolds). This work also follows the long tradition of integral geometry problems as discussed for instance in \cite{GelfandGindikinGraev, Hel99, Hel13}. Here one of the main points is that our results apply to manifolds that do not need to have special symmetries (see the recent preprint \cite{GGSU17} for related results).

\subsection{Results}

For Euclidean or hyperbolic
space in dimensions $n \geq 2$, one has the following basic theorems on the
injectivity of this transform (see~\cite{Hel99},~\cite{Jen04},~\cite{Hel94}):

\begin{theoremA}
If $f$ is a continuous function in $\mR^n$ satisfying $|f(x)|
\leq {C(1+|x|)}^{-\eta}$ for some $\eta > 1$, and if $f$ integrates to zero
over all lines in $\mR^n$, then $f \equiv 0$.
\end{theoremA}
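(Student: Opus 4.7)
The plan is to combine a reduction to two dimensions with the Fourier slice theorem. The hypothesis $\eta > 1$ ensures that each line integral $\int_\mR f(y+tv)\,dt$ converges absolutely; together with continuity this makes $f$ bounded, and in particular a tempered distribution on $\mR^n$.

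\textbf{Reduction to dimension two.} Fix an arbitrary point $x_0 \in \mR^n$ and a $2$-plane $\Pi$ passing through $x_0$. The restriction $g \mdef f|_\Pi$ is a continuous function on $\Pi \cong \mR^2$ obeying the same pointwise decay bound, and every line of $\Pi$ is also a line of $\mR^n$, so $g$ integrates to zero over all lines of $\Pi$. If the theorem is known for $n = 2$, then $g \equiv 0$, hence $f(x_0) = 0$; since $x_0$ was arbitrary, $f \equiv 0$.

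\textbf{Fourier slice in $\mR^2$.} In two dimensions the X-ray transform coincides with the Radon transform
\[
Rf(\omega, s) = \int_{x \cdot \omega = s} f(x)\,d\sigma(x), \qquad \omega \in S^1,\; s \in \mR.
\]
By a direct Fubini computation, when $f \in L^1(\mR^2)$ one has the Fourier slice identity $\widehat{Rf}(\omega, \tau) = \hat f(\tau \omega)$, where $\widehat{\cdot}$ on the left is the one-dimensional Fourier transform in $s$. Granted this identity, the hypothesis $Rf \equiv 0$ forces $\hat f(\tau \omega) = 0$ for every $\tau \in \mR$ and $\omega \in S^1$, hence $\hat f \equiv 0$ and $f \equiv 0$ by Fourier inversion.

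\textbf{Main obstacle.} The assumption $\eta > 1$ does not place $f$ in $L^1(\mR^2)$ (which would need $\eta > 2$), so Fubini does not immediately justify Fourier slice. I would resolve this by approximating $f$ by a double sequence $f_{\eps, R} \mdef (\chi_R f) \conv \rho_\eps$, where $\rho_\eps$ is a radial Schwartz mollifier and $\chi_R$ a smooth cut-off to $\ball{R}{0}$. Each $f_{\eps, R}$ is Schwartz, so the classical Fourier slice identity applies to it. The pointwise decay estimate $|Rf(\omega, s)| \lesssim (1+|s|)^{1-\eta}$, which follows from the bound on $f$ by integrating along the line $\{x \cdot \omega = s\}$, combined with $Rf \equiv 0$, lets one pass to the limits $R \to \infty$ and then $\eps \to 0$ in $\mathcal S'(\mR^2)$, yielding $\hat f \equiv 0$ as a tempered distribution and hence $f \equiv 0$. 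The sharpness of the exponent $\eta > 1$ is witnessed by Zalcman's classical counterexample of a nonzero continuous $f$ on $\mR^2$ with $|f(x)| \lesssim (1+|x|)^{-1}$ and vanishing Radon transform.
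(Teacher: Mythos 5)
The paper does not actually prove Theorem~A; it is quoted as a classical result from~\cite{Hel99},~\cite{Jen04},~\cite{Hel94}, so there is no in-paper argument to compare against, and your proposal has to stand on its own. Your reduction to $n=2$ by restricting to two-planes is correct and standard. The problem is the two-dimensional step. The slice identity $\widehat{Rf_{\eps,R}}(\omega,\tau)=\widehat{f_{\eps,R}}(\tau\omega)$ does hold for each fixed $\eps,R$, but your limit passage does not close. On the Fourier side you only obtain $\widehat{f_{\eps,R}}\to\hat f$ in $\mathcal S'(\mR^2)$, and distributional convergence carries no information about values on the measure-zero family of rays $\{\tau\omega\}$, so ``$\hat f(\tau\omega)=0$'' is not a meaningful conclusion of the limit. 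On the Radon side, $R(\chi_Rf)(\omega,\cdot)\to Rf(\omega,\cdot)\equiv 0$ only pointwise, with the majorant $C(1+|s|)^{1-\eta}$, which is \emph{not} in $L^1(\der s)$ when $1<\eta\le 2$; dominated convergence fails, and in fact $\|R(\chi_Rf)(\omega,\cdot)\|_{L^1(\der s)}$ can grow like $R^{2-\eta}$, so the one-dimensional Fourier transforms $\widehat{Rf_{\eps,R}}(\omega,\tau)$ you need to send to zero are not even bounded uniformly in $R$. This is not a removable technicality: the hypothesis $\eta>1$ must enter quantitatively, whereas in your argument it is used only to make line integrals converge --- a property the Zalcman--Armitage counterexamples also enjoy along each individual line.

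The Fourier strategy can be repaired, but by a different mechanism: instead of restricting $\hat f$ to lines, test it against $\varphi\in C_c^\infty(\mR^2\setminus\{0\})$. Writing $\hat\varphi(x)=\int_{S^1}\psi_\omega(x\cdot\omega)\,\der\omega$ with $\psi_\omega(s)=\int_0^\infty\tau\varphi(\tau\omega)\e^{-\i s\tau}\,\der\tau$, which is Schwartz uniformly in $\omega$ because $\varphi$ vanishes near the origin, one gets an \emph{absolutely convergent} Fubini computation $\langle\hat f,\varphi\rangle=\int_{S^1}\int_\mR Rf(\omega,s)\psi_\omega(s)\,\der s\,\der\omega=0$; here $\eta>1$ is used exactly once, to get $\int_\mR(1+\sqrt{s^2+t^2})^{-\eta}\,\der t\lesssim(1+|s|)^{1-\eta}<\infty$. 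This yields $\supp\hat f\subseteq\{0\}$, so $f$ is a polynomial, and the decay forces $f\equiv0$. The classical proof in~\cite{Hel99} is different again: it converts the vanishing of the angular average of $Rf(\cdot,p)$ into the Abel integral equation $\int_p^\infty F_{x_0}(r)\,r\,(r^2-p^2)^{-1/2}\,\der r=0$ for the circular means $F_{x_0}$ of $f$ about an arbitrary center $x_0$ (again convergent precisely because $\eta>1$), inverts it to get $F_{x_0}\equiv0$, and lets $r\to0$ to conclude $f(x_0)=0$. Either of these would replace your cut-off/mollification limit; as written, that limit is the gap.
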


\begin{theoremB}
If $f$ is a continuous function in the hyperbolic space $\mathbb{H}^n$
satisfying $|f(x)| \leq C \e^{-d(x,o)}$, where $o \in \mathbb{H}^n$ is some
fixed point, and if $f$ integrates to zero over all geodesics in
$\mathbb{H}^n$, then $f \equiv 0$.
\end{theoremB}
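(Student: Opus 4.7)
My plan is to reduce to the two-dimensional case and then invoke a support theorem.

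\emph{Reduction to dimension two.} Every geodesic in $\mathbb{H}^n$ is contained in some totally geodesic 2-plane $P \cong \mathbb{H}^2$ passing through the base point $o$; moreover, for any $x \in \mathbb{H}^n$ there is such a $P$ containing $x$ (the case $x = o$ being immediate). The restriction $f|_P$ is continuous, and since the induced distance on $P$ coincides with $d(\cdot,o)|_P$ it inherits the exponential decay. Any geodesic of $P$ is also a geodesic of $\mathbb{H}^n$, so $f|_P$ integrates to zero along every geodesic of $P$. Hence it is enough to prove the statement on $\mathbb{H}^2$: if I know $f|_P \equiv 0$ for every such plane, then $f \equiv 0$.

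\emph{Two-dimensional case via a support theorem.} On $\mathbb{H}^2$ my plan is to apply Helgason's support theorem for the geodesic Radon transform: if $Rf$ vanishes on all geodesics which do not meet the closed geodesic ball $\overline{B(o,R)}$, and $f$ has suitable decay, then $\supp f \subset \overline{B(o,R)}$. Applied with $R > 0$ arbitrarily small, and combined with continuity of $f$, this gives $\supp f \subset \{o\}$ and hence $f \equiv 0$.

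\emph{Main obstacle.} The delicate point is to justify the support theorem under the sharp hypothesis $|f(x)| \le C e^{-d(x,o)}$ rather than compact support or faster decay. This decay rate is critical: geodesic spheres in $\mathbb{H}^n$ have area growing like $e^{(n-1)d}$, so the line integrals $\int_\gamma f\,ds$ are just absolutely convergent, and any standard Fourier-analytic inversion lives near the boundary of its natural function space. To handle this, I see two routes. The first is to pass to the Klein model, where geodesics become Euclidean chords of the unit ball and the weight $e^{-d(x,o)}$ is comparable to $\sqrt{1-|x|^2}$; one can then try to derive the conclusion from Theorem~A applied to a weighted Euclidean X-ray transform, using the boundary weight to control contributions from geodesics near the sphere at infinity. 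The second is harmonic-analytic: use the Helgason--Fourier transform on $\mathbb{H}^2$, in which the X-ray transform factors through a Jacobi/Abel transform whose inversion is explicit, and exponential decay is precisely the hypothesis required for the spectral integrals to converge. Either way, the analytic core of the argument is the boundary/decay trade-off, not the geometry, which is why the reduction to $\mathbb{H}^2$ genuinely simplifies matters.
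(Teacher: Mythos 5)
Your reduction to dimension two is correct and standard: every geodesic of $\mathbb{H}^n$ lies in a totally geodesic copy of $\mathbb{H}^2$ through $o$, these planes cover $\mathbb{H}^n$, the induced distance agrees with the ambient one, and geodesics of the plane are geodesics of the ambient space, so the hypotheses descend to $f|_P$. But this reduction is the easy part; the entire analytic content of Theorem B is the two-dimensional injectivity statement, and that is exactly what your proposal does not establish. The support theorem you want to invoke is not available under the stated hypothesis: Helgason's support theorem for the geodesic Radon transform on hyperbolic space requires rapid decay, namely $\sup_x \e^{k\,d(x,o)}|f(x)| < \infty$ for \emph{every} $k>0$, whereas Theorem B assumes only the single borderline rate $\e^{-d(x,o)}$ (and indeed there are counterexamples to support-type conclusions when the decay is too weak, as the paper notes in the Euclidean setting via \cite{Zal82}, \cite{Arm94}). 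So "apply the support theorem with $R$ arbitrarily small" is not a step you can take; it assumes a stronger theorem than the one you are trying to prove. Your two fallback "routes" (Klein model with a boundary weight, or the Helgason--Fourier/Abel transform) are plausible directions --- the second is close to how the classical proof actually proceeds, via spherical averages around $o$ and an Abel-type integral equation whose convergence is what the $\e^{-d(x,o)}$ decay buys --- but as written they are plans, not arguments: no inversion formula is derived, no convergence is checked, and the critical decay trade-off you correctly identify as the main obstacle is left unresolved.

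For context: the paper does not prove Theorem B either; it is quoted as a classical result with references to \cite{Hel99}, \cite{Jen04}, \cite{Hel94}. Note also that the paper's own machinery (Theorem~\ref{thm_main1} specialized to $K_0=1$) only yields injectivity on $\mathbb{H}^n$ for decay rates $\eta > \frac{n+1}{2}$, which is strictly weaker than Theorem B's $\eta = 1$; so you cannot recover Theorem B from the paper's main results, and a complete proof really does require the sharper harmonic-analytic argument you only sketched.
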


We remark that some decay conditions for the function $f$ are required,
since there are examples of nontrivial functions in $\mathbb{R}^2$ which
decay like $|x|^{-2}$ on every line and whose X-ray
transform vanishes~\cite{Zal82},~\cite{Arm94}.
Related results on the invertibility of Radon type transforms on constant
curvature spaces or noncompact homogeneous spaces may be found
in~\cite{Hel99},~\cite{Hel13}.

The purpose of this article is to give analogues of the above theorems
on more general, not necessarily symmetric Riemannian manifolds.
We will work in the setting of Cartan-Hadamard manifolds,
i.e.\ complete simply connected Riemannian manifolds with nonpositive
sectional curvature. Euclidean and hyperbolic spaces are special
cases of Cartan-Hadamard manifolds, and further explicit examples are recalled
in Section~\ref{sec:examples}. It is well known that any Cartan-Hadamard manifold
is diffeomorphic to $\mathbb{R}^n$,
the exponential map at any point is a diffeomorphism,
and the map $x \mapsto {d(x,p)}^2$
is strictly convex for any $p \in M$ (see e.g.~\cite{Pet06}).

\begin{definitionnonum}
Let $(M,g)$ be a Cartan-Hadamard manifold, and fix a point $o \in M$. If $\eta > 0$, define the spaces of exponentially and polynomially decaying continuous functions by
\begin{align*}
E_{\eta}(M) &= \{ f \in C(M) \,;\, |f(x)| \leq C \e^{-\eta d(x,o)} \text{ for some $C > 0$} \}, \\
P_{\eta}(M) &= \{ f \in C(M) \,;\, |f(x)| \leq {C(1+d(x,o))}^{-\eta} \text{ for some $C > 0$} \}.
\end{align*}
Also define the spaces
\begin{align*}
E_{\eta}^1(M) &= \{ f \in C^1(M) \,;\, |f(x)| + |\nabla f(x)| \leq C \e^{-\eta d(x,o)} \text{ for some $C > 0$} \}, \\
P_{\eta}^1(M) &= \{ f \in C^1(M) \,;\, \text{$|f(x)| \leq {C(1+d(x,o))}^{-\eta}$ and } \\
 & \hspace{100pt} \text{$|\nabla f(x)| \leq {C(1+d(x,o))}^{-\eta-1}$ for some $C > 0$} \}.
\end{align*}
Here $\nabla = \nabla_g$ is the total covariant derivative in $(M,g)$ and
$|\,\cdot\,| = |\,\cdot \,|_g$ is the $g$-norm on tensors.
\end{definitionnonum}

It follows from Lemma~\ref{lemma_u_finite} that if $f \in P_{\eta}(M)$
for some $\eta > 1$, then the integral of $f$ over any maximal geodesic
in $M$ is finite. For such functions $f$ we may define the geodesic
X-ray transform $I_0 f$ of $f$ by
\[
  I_0 f(\gamma)
  = \int_{-\infty}^{\infty} f(\gamma(t)) \,dt, \qquad \text{$\gamma$ is a geodesic}.
\]
The inverse problem for the geodesic X-ray transform is to determine
$f$ from the knowledge of $I_0 f$. By linearity, uniqueness for this inverse
problem reduces to showing that $I_0 f = 0$ implies $f = 0$.

More generally, suppose that $f$ is a $C^1$-smooth
symmetric covariant $m$-tensor field
on $M$, written in local coordinates (using the Einstein summation convention)
as
\begin{equation*}
f = f_{j_1\dots j_m}(x) \,dx^{j_1} \otimes \dots \otimes dx^{j_m}.
\end{equation*}
We say that $f \in P_{\eta}(M)$ if $|f|_g \in P_{\eta}(M)$, and $f \in P_{\eta}^1(M)$ if $|f|_g \in P_{\eta}(M)$ and $|\nabla f|_g \in P_{\eta+1}(M)$, etc.
We recall that, in terms of local coordinates, 
\[
  \absg{f(x)} = \Big( g^{j_1 k_1}(x)\cdots g^{j_m k_m}(x) f_{j_1\dots j_m}(x)f_{k_1\dots k_m}(x)\Big)^{1/2}
\]
where $(g^{jk})$ is the inverse matrix of $(g_{jk})$.

Now if $f \in P_{\eta}(M)$ for some $\eta > 1$, then the geodesic X-ray transform $I_m f$ of $f$ is well defined by the formula
\[
I_m f(\gamma) = \int_{-\infty}^{\infty} f_{\gamma(t)}(\dot{\gamma}(t), \ldots, \dot{\gamma}(t)) \,dt, \qquad \text{$\gamma$ is a geodesic}.
\]
This transform always has a kernel when $m \geq 1$: if $h$ is
a symmetric $(m-1)$-tensor field satisfying $h \in P_{\eta}^1(M)$
for some $\eta > 0$, then $I_m (\sigma \nabla h) = 0$ where
$\sigma$ denotes symmetrization of a tensor field
(see Section~\ref{subsec:tensors}).
We say that $I_m$ is solenoidal injective
if $I_m f = 0$ implies $f = \sigma \nabla h$
for some $(m-1)$-tensor field $h$.

Our first theorem proves solenoidal injectivity of $I_m$
for any $m \geq 0$ on Cartan-Hadamard manifolds
with bounded sectional curvature, assuming exponential
decay of the tensor field and its first derivatives.
We will denote the sectional curvature of a two-plane
$\Pi \subset T_x M$ by $K_x(\Pi)$, and we write
$-K_0 \leq K \leq 0$ if $-K_0 \leq K_x(\Pi) \leq 0$
for all $x \in M$ and for all two-planes $\Pi \subset T_x M$.

\begin{theorem}\label{thm_main1}
Let $(M,g)$ be a Cartan-Hadamard manifold of dimension $n \geq 2$, and assume that
\[
-K_0 \leq K \leq 0, \qquad \text{for some $K_0 > 0$.}
\]
If $f$ is a symmetric $m$-tensor field in $E_{\eta}^1(M)$ for
some $\eta > \frac{n+1}{2}\sqrt{K_0}$, and if $I_m f = 0$, then
$f = \sigma \nabla h$ for some symmetric $(m-1)$-tensor field $h$ such that $h \in E_{\eta-\eps}(M)$ for any $\eps > 0$.
(If $m=0$, then $f \equiv 0$.)
\end{theorem}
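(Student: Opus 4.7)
The plan is to adapt the Pestov-identity (energy identity) technique to the noncompact Cartan-Hadamard setting, using exponentially weighted cutoffs so that the boundary terms from large geodesic balls vanish. To this end, I would associate to $f$ the transport function $u = u^f$ on the unit sphere bundle $SM$ defined by
\[
u(x,v) = \int_0^\infty f_{\gamma_{x,v}(t)}\bigl(\dot\gamma_{x,v}(t),\ldots,\dot\gamma_{x,v}(t)\bigr)\,\der t.
\]
Using $d(\gamma_{x,v}(t),o) \geq t - d(x,o)$ together with the exponential decay of $f$, one obtains $|u(x,v)| \leq C \e^{-\eta d(x,o)}$. Rauch's comparison theorem bounds normal Jacobi fields along geodesics by $\sinh(\sqrt{K_0}\,t)$, and the same comparison would control $\vnabla u$, $\hnabla u$, and their $X$-derivatives, at the price of an additional factor of order $\e^{(n-1)\sqrt{K_0}\,d(x,o)/2}$ coming from Jacobi field growth. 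Standard arguments show that $u$ is $C^1$ and satisfies the transport equation $Xu = -\lambda_m(f)$, where $X$ is the geodesic spray on $SM$ and $\lambda_m(f)(x,v) = f_x(v,\ldots,v)$; the hypothesis $I_m f = 0$ enters as the symmetry $u(x,v) + (-1)^m u(x,-v) = 0$.

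Next, I would apply the standard Pestov identity on $SM$,
\[
\|\vnabla Xu\|^2 = \|X\vnabla u\|^2 + (n-1)\|Xu\|^2 - \langle \R \vnabla u, \vnabla u\rangle,
\]
first to $\chi_R u$, where $\chi_R$ is a radial cutoff supported on $B(o,2R)$ and equal to $1$ on $B(o,R)$. The commutator errors from $[X,\chi_R]$ and $[\vnabla,\chi_R]$ are supported on the annulus $\{R \leq d(x,o) \leq 2R\}$. This is where the main technical obstacle lies: one must verify that every weighted $L^2$ integral on $SM$ converges, and that these commutator terms vanish as $R\to\infty$. The volume of $B(o,R)$ is bounded by $C\e^{(n-1)\sqrt{K_0}\,R}$ by Bishop-G\"unther comparison, while $u$ and its first derivatives carry exponential weights combining the decay $\e^{-\eta d(x,o)}$ with the Jacobi-field growth $\e^{(n-1)\sqrt{K_0}\,d(x,o)/2}$. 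The threshold $\eta > \tfrac{n+1}{2}\sqrt{K_0}$ is precisely tuned so that $2\eta$ beats the sum of the volume growth and the derivative loss, forcing the cutoff errors to vanish in the limit and yielding the global Pestov identity on all of $SM$.

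Finally, since $K \leq 0$, the curvature term $\langle \R\vnabla u,\vnabla u\rangle$ is nonpositive, so the limiting identity gives $\|X\vnabla u\|^2 + (n-1)\|Xu\|^2 \leq \|\vnabla Xu\|^2$. Decomposing $u$ into vertical spherical harmonics $u = \sum_k u_k$ and using the transport equation $Xu = -\lambda_m(f)$ (whose right-hand side has vertical degree at most $m$), one concludes in the standard way that $u_k = 0$ for $k \geq m$, i.e. $u$ is a polynomial of degree at most $m-1$ in $v$. This means $u(x,v) = h_x(v,\ldots,v)$ for some symmetric $(m-1)$-tensor field $h$, and the transport equation then translates directly into $f = \sigma\nabla h$; for $m=0$ the polynomial has degree $\leq -1$, so $u \equiv 0$ and consequently $f \equiv 0$.
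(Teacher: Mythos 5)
Your overall skeleton (the transport function $u^f$, the symmetry from $I_mf=0$, Jacobi-field bounds for the derivatives of $u^f$, volume growth of spheres, spherical harmonics in $v$, and the final reconstruction of $h$ from the low-degree modes) matches the paper. But the analytic engine you propose --- the second-order Pestov identity applied to $\chi_R u$ with cutoffs --- is precisely what the paper goes out of its way to avoid, and for a reason that creates a genuine gap in your argument. The identity $\|\vnabla Xu\|^2 = \|X\vnabla u\|^2 + (n-1)\|Xu\|^2 - \langle R\vnabla u,\vnabla u\rangle$ involves $X\vnabla u$, i.e.\ two derivatives of $u$, hence (through the integral formula for $u^f$) second derivatives of $f$ and derivatives of Jacobi fields with respect to their initial data. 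The theorem only assumes $f\in E^1_\eta(M)$, so $u^f$ is merely $W^{1,\infty}$, and the terms in your identity are not defined. The paper instead uses the first-order consequence of the Pestov identity from Paternain--Salo--Uhlmann, namely the Beurling-transform contraction $\|X_-u_k\|\le D_n(k)\|X_+u_k\|$ for each vertical harmonic $u_k$, combined with $X_+u_k+X_-u_{k+2}=0$ for $k\ge m$ and the iteration $\|X_-u_k\|\le \bigl[\prod_l D_n(k+2l)\bigr]\|X_+u_{k+2N}\|\to 0$; this only requires $u^f\in H^1(SM)$, which is exactly what the decay estimates deliver.

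A second, related gap: even granting the global Pestov identity, your claim that ``one concludes in the standard way that $u_k=0$ for $k\ge m$'' does not follow from the global identity when $m\ge 1$. The inequality $\|X\vnabla u\|^2+(n-1)\|Xu\|^2\le\|\vnabla Xu\|^2$ applied to the full $u$ gives nothing mode-by-mode; to kill the high modes one must localize in vertical frequency, which is exactly the $X_\pm$ estimate above, and one must separately justify that $\|X_+u_{k+2l}\|\to 0$ (from $\sum_k\|X_+u_k\|^2<\infty$) and that $\prod_l D_n(k+2l)<\infty$. Finally, your bookkeeping of the derivative loss is off: normal Jacobi fields with unit initial data grow like $\e^{\sqrt{K_0}t}$, not $\e^{(n-1)\sqrt{K_0}t/2}$, so $|\hnabla u^f|\lesssim \e^{-(\eta-\sqrt{K_0})d(x,o)}$; the threshold $\eta>\tfrac{n+1}{2}\sqrt{K_0}$ then comes from $2(\eta-\sqrt{K_0})>(n-1)\sqrt{K_0}$ against the volume growth $\e^{(n-1)\sqrt{K_0}r}$. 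You land on the right exponent, but not by a correct computation.
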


The second theorem considers the case where the sectional curvature
decays polynomially at infinity, and proves solenoidal injectivity
if the tensor field and its first derivatives also decay polynomially. 

\begin{theorem}~\label{thm_main2}
Let $(M,g)$ be a Cartan-Hadamard manifold of dimension $n \geq 2$, and assume that the function
\[
\K(x) = \sup \, \{ |K_x(\Pi)| \,;\,
  \Pi \subset T_x M \ \mathrm{is\ a\ two\text{-}plane} \}
\]
satisfies $\K \in P_{\kappa}(M)$ for some $\kappa > 2$.
If $f$ is a symmetric $m$-tensor field in $P_{\eta}^1(M)$ for
some $\eta > \frac{n+2}{2}$, and if $I_m f = 0$, then $f = \sigma \nabla h$ for
some symmetric $(m-1)$-tensor field $h \in P_{\eta-1}(M)$.
(If $m=0$, then $f \equiv 0$.)
\end{theorem}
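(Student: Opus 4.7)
The approach I would take follows the Pestov energy strategy for tensor tomography, adapted to the noncompact Cartan-Hadamard setting as in~\cite{Leh16}. First I would lift the problem to the unit sphere bundle $SM$ by setting
\begin{equation*}
  u^{f}(x,v) = \int_{0}^{\infty} f_{\gamma_{x,v}(t)}\bigl(\dot{\gamma}_{x,v}(t), \ldots, \dot{\gamma}_{x,v}(t)\bigr) \,dt,
\end{equation*}
which converges and is of class $C^{1}$ by (the cited) Lemma~\ref{lemma_u_finite}, applied to both $f$ and $\nabla f$. A direct computation gives $X u^{f} = -\lambda f$, where $X$ is the geodesic vector field and $\lambda f(x,v) = f_{x}(v,\ldots,v)$; the hypothesis $I_{m} f = 0$ produces the parity identity $u^{f}(x,-v) = (-1)^{m+1} u^{f}(x,v)$, so $u^{f}$ extends to a $C^{1}$ function on all of $SM$.

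Next I would establish polynomial decay for $u^{f}$ and its horizontal and vertical derivatives $\nablasm u^{f}$ on $SM$. Using Jacobi/Rauch comparison with the curvature bound $\K \in P_{\kappa}(M)$, $\kappa > 2$, one shows that along any unit-speed geodesic the Jacobi fields are essentially linear at infinity, with constants uniform in the geodesic. Combined with the $P_{\eta}^{1}$ decay of $f$ and $\nabla f$, this yields pointwise bounds of the form $|u^{f}(x,v)| + |\nablasm u^{f}(x,v)| \leq C(1+d(x,o))^{-\eta+1}$ on $SM$, which is the decay rate dictated by integrating $f$ along geodesics that may pass close to the basepoint $o$.

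The heart of the proof is then the Pestov identity on $S\ball{R}{o}$, which in its standard form reads
\begin{equation*}
  \norm{\nablasm X u^{f}}_{L^{2}(S\ball{R}{o})}^{2} = \norm{X \nablasm u^{f}}^{2} + (n-1)\norm{X u^{f}}^{2} - \langle \R \nablasm u^{f}, \nablasm u^{f}\rangle + \mathcal{B}_{R},
\end{equation*}
where the term involving $\R$ is a contraction with the Riemann tensor (nonnegative by nonpositivity of sectional curvature) and $\mathcal{B}_{R}$ is a boundary integral over $S\sphere{R}{o}$. The decay estimates of the previous step bound $\mathcal{B}_{R}$ by a power of $R$; the threshold $\eta > (n+2)/2$ is precisely what drives $\mathcal{B}_{R} \to 0$ once the decay rates of $u^{f}$ and $\nablasm u^{f}$ are balanced against the spherical volume growth $\sim R^{n-1}$. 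Passing to the limit $R \to \infty$ yields a coercive identity forcing the vertical spherical-harmonic components of $u^{f}$ of degree $\geq m$ to vanish, so $u^{f}$ is polynomial of degree at most $m-1$ in $v$. Writing $u^{f} = -\lambda h$ for a symmetric $(m-1)$-tensor $h$ and combining with $X u^{f} = -\lambda f$ gives $\lambda(f - \sigma \nabla h) = 0$; by injectivity of $\lambda$ on symmetric tensors, $f = \sigma \nabla h$, and when $m = 0$ there is no $h$ and $f \equiv 0$.

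The main obstacle is the third step: carrying out the Pestov identity on an exhaustion requires sharp control of the boundary integral $\mathcal{B}_{R}$, which must be provided by the polynomial decay estimates of the second step. This in turn rests on the curvature-decay hypothesis $\kappa > 2$, which makes Jacobi fields almost Euclidean at infinity and causes integration along geodesics to lose only one power in the decay rate. The interplay between this decay, the spherical volume growth $R^{n-1}$, and the two factors of $\nablasm u^{f}$ appearing in the boundary term is what pins down the precise threshold $\eta > (n+2)/2$.
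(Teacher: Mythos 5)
Your first two steps (the transport equation $Xu^f=-\lambda f$, the parity identity, and the polynomial decay of $u^f$ and $\nablasm u^f$ via Jacobi field comparison under $\K\in P_\kappa(M)$, $\kappa>2$) match the paper's Lemmas~\ref{lma:Xuf}, \ref{lma:sm-gradients-symmetry}, \ref{cor:jacobi-estimates} and~\ref{lma:uf-gradient-estimates}, and your accounting of the threshold $\eta>\tfrac{n+2}{2}$ against the volume growth $R^{n-1}$ is exactly the computation in Lemma~\ref{lma:u-H1}. The gap is in your third step. First, the standard Pestov identity involves $\norm{X\nablasm u^f}^2$ and $\norm{\nablasm Xu^f}^2$, i.e.\ second derivatives of $u^f$; under the hypothesis $f\in P^1_\eta(M)$ the function $u^f$ is only $W^{1,\infty}(SM)$ (Lipschitz, differentiable a.e.), and there is no reason for it to be twice differentiable, even weakly. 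The paper deliberately avoids the Pestov identity for precisely this reason and instead runs a purely first-order argument. Second, and more seriously, even granting the regularity, the standard Pestov identity does not "force the components of degree $\geq m$ to vanish" for $m\geq 2$: the identity contains the term $(n-1)\norm{Xu^f}^2=(n-1)\norm{f}^2$ competing against $\norm{\vnabla Xu^f}^2=\norm{\vnabla f}^2$, and for tensors of order $m\geq 2$ the latter dominates, so the identity is not coercive. This is the classical obstruction to tensor tomography for higher-order tensors, and it is why the paper uses the Guillemin--Kazhdan/Beurling-contraction mechanism instead: after decomposing $u^f=\sum_k u_k$ into vertical spherical harmonics, the transport equation gives $X_+u_k+X_-u_{k+2}=0$ for $k\geq m$, the contraction inequality $\norm{X_-u_k}\leq D_n(k)\norm{X_+u_k}$ of Lemma~\ref{lma:iteration} lets one iterate upward in frequency, and the convergence of $\prod_l D_n(k+2l)$ together with $\norm{X_+u_{k+2l}}\to 0$ (Corollary~\ref{cor:chap4cor2}, a consequence of $u^f\in H^1(SM)$) kills $X_\pm u_k$ for all $k\geq m$ simultaneously, for every $m$.

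A subsidiary issue: you assert that $u^f$ is $C^1$ "by Lemma~\ref{lemma_u_finite} applied to both $f$ and $\nabla f$." Differentiating under the integral sign in the direction of $\hnabla$ or $\vnabla$ produces Jacobi fields $J(t)$ and $D_tJ(t)$ multiplying $\hnabla f$ and $\vnabla f$ along the flow, so the integrability of the differentiated integrand is not a consequence of the decay of $\nabla f$ alone; one needs the uniform-in-the-geodesic Jacobi estimates of Corollary~\ref{cor:jacobi-estimates} (which is where $\kappa>2$ enters, via $\int_0^\infty s\K(\gamma(s))\,\der s<\infty$ along escaping geodesics), and even then the paper only obtains local Lipschitz regularity, not $C^1$. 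Your overall architecture (transport equation, decay, energy estimate, projection onto degree $\leq m-1$, reconstruction of $h$ via $\alpha^j$) is sound, but the energy step must be replaced by the frequency-localized first-order argument for the proof to go through at the stated regularity and for $m\geq 2$.
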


The second theorem is mostly of interest in two dimensions because of the following rigidity phenomenon: any manifold of dimension $\geq 3$ that satisfies the conditions of the theorem is isometric to Euclidean space~\cite{GW82}. See Section \ref{sec:examples} for a discussion. We will give the proof in any dimension since this may be useful in subsequent work.

We remark that Theorems~\ref{thm_main1}--\ref{thm_main2} correspond to
Theorems A and B above, but the manifolds considered in
Theorems~\ref{thm_main1}--\ref{thm_main2} can be much more general and
include many examples with nonconstant curvature (see Section~\ref{sec:examples}).
The results will be proved by using energy methods based on Pestov identities,
which have been studied extensively in the case of compact manifolds with
strictly convex boundary. We refer to~\cite{Muk77},~\cite{PS88},~\cite{Sha94},~\cite{Kn02},~\cite{PSU14}
for some earlier results.
In fact, Theorems~\ref{thm_main1}--\ref{thm_main2} can be viewed as
an extension of the tensor tomography results in~\cite{PS88}
from the case of compact nonpositively curved
manifolds with boundary to the case of certain noncompact manifolds.
We remark that one of the main points in our theorems is that the functions
and tensor fields are not compactly supported (indeed, the compactly supported
case would reduce to known results on compact manifolds with boundary).

More recently, the work~\cite{PSU13} gave a particularly simple derivation
of the basic Pestov identity for X-ray transforms
and proved solenoidal injectivity of $I_m$ on simple two-dimensional manifolds.
Some of these methods were extended to all dimensions in~\cite{PSU15}
and to the case of attenuated X-ray transforms in~\cite{GPSU16}.
Following some ideas in~\cite{PSU13},
the work~\cite{Leh16} proved versions of
Theorems~\ref{thm_main1}--\ref{thm_main2} for the case of two-dimensional
Cartan-Hadamard manifolds.

In this paper we combine the main ideas in~\cite{Leh16} with the methods
of~\cite{PSU15} and prove solenoidal injectivity results on Cartan-Hadamard
manifolds in any dimension $n \geq 2$. However, instead of using the Pestov
identity in its standard form (which requires two derivatives of the
functions involved), we will use a different argument
from~\cite{PSU15} related to the $L^2$ contraction property of
a Beurling transform on nonpositively curved manifolds.
This argument dates back to~\cite{GK80a,GK80}, it only involves first order
derivatives and immediately applies to tensor fields of arbitrary order.
The $C^1$ assumption in Theorems~\ref{thm_main1}--\ref{thm_main2} is due to
this method of proof, and the decay assumptions are related to the growth of
Jacobi fields. We mention that Theorems~\ref{thm_main1}--\ref{thm_main2} also
extend the two-dimensional results of~\cite{Leh16} by assuming slightly
weaker conditions.

This article is organized as follows. Section~\ref{sec:introduction} is the introduction, and Section~\ref{sec:examples}
contains examples of Cartan-Hadamard manifolds. In Section~\ref{sec:preliminaries} we review basic facts
related to geodesics on Cartan-Hadamard manifolds, geometry of the sphere
bundle and symmetric covariant tensors fields, following~\cite{Leh16}, \cite{PSU15}, \cite{DS11}.
Section~\ref{sec:estimates} collects some estimates concerning the
growth of Jacobi fields and related decay properties for solutions of transport
equations. Finally, Section~\ref{sec:proofs} includes the proofs of the
main theorems based on $L^2$ inequalities for Fourier coefficients.

\subsection*{Acknowledgements}
All authors were supported by the Academy of Finland (Finnish Centre of Excellence
in Inverse Problems Research, grant numbers 284715 and 309963), and J.L.\ and M.S.\ were also partly supported by the European Research Council under the European Union's Seventh Framework Programme (FP7/2007-2013) / ERC Starting Grant agreement no 307023.

\section{Examples of Cartan-Hadamard manifolds}\label{sec:examples}

In this section we recall some facts and examples related to Cartan-Hadamard manifolds.
Most of the details can be found
in~\cite{BO69},~\cite{KW74},~\cite{GW79},~\cite{GW82},~\cite{Pet06}.
We first discuss the case of two-dimensional manifolds,
which is quite different compared to manifolds of
higher dimensions.

\subsection{Dimension two}
Let $K \in C^\infty(\mathbb{R}^2)$.
A theorem of Kazdan and Warner~\cite{KW74} states that a necessary
and sufficient condition for existence of a complete Riemannian metric
on $\mathbb{R}^2$ with Gaussian curvature $K$ is
\begin{equation}\label{eq:KW74cond}
  \lim_{r\to \infty} \inf_{\abs{x}\geq r}
  K(x) \leq 0.
\end{equation}
This provides a wide class of Riemannian metrics satisfying
the assumptions of Theorem~\ref{thm_main1} in dimension two.
However, this does not directly give an example of a manifold satisfying
the assumptions of Theorem~\ref{thm_main2} since the
condition~\eqref{eq:KW74cond} is given with respect to the Euclidean metric of
$\mathbb{R}^2$.

Examples of manifolds satisfying
the assumptions of Theorem~\ref{thm_main2}
can be constructed using warped products.
Let $(r,\theta)$ be the polar coordinates in $\mathbb{R}^2$
and consider a warped product
\begin{equation}\label{eq:warped-metric}
  ds^2 = dr^2 + f^2(r)d\theta^2,
\end{equation}
where $f$ is a smooth function that is positive for $r > 0$ and satisfies $f(0) = 0$ and $f'(0) = 1$.
This is a Riemannian metric on $\mR^2$ having Gaussian curvature
\begin{equation}
  K(x) = -\frac{f''(\abs{x})}{f(\abs{x})},
\end{equation}
which depends only on the Euclidean distance $|x| \mdef r(x)$ to the origin.
We remark that distances to the origin in the Euclidean metric and
in the warped metric coincide.
It is shown in~\cite[Proposition 4.2]{GW79} that for every
$k \in C^\infty([0,\infty))$ with $k \leq 0$ there exists a unique
warped metric of the form~\eqref{eq:warped-metric} such that $k(|x|) = K(x)$.
Hence warped products provide many examples of two-dimensional
manifolds for which
$\mathcal{K}(x) \leq C(1+\abs{x})^{-\kappa}$ with $\kappa > 0$,
i.e.~$\mathcal{K} \in P_\kappa(M)$.

\subsection{Higher dimensions}
Warped products can also be used to construct examples of
higher dimensional Cartan-Hadamard manifolds satisfying the assumptions of
Theorem~\ref{thm_main1}, see e.g.~\cite{BO69}.

In the case of Theorem~\ref{thm_main2}
it turns out that the decay condition for curvature
is very restrictive in higher dimensions:
the only possible geometry is the Euclidean one.
This follows directly from a theorem
by Greene and Wu in~\cite{GW82}.
If $M$ is a Cartan-Hadamard manifold with $n = \dim(M) \geq 3$,
$k(s) = \sup \{\,\mathcal{K}(x)\setsep x\in M, d(x,o) = s\,\}$,
where $o$ is a fixed point, and one of the following holds:
\begin{enumerate}
  \item $n$ is odd and $\lim\inf_{s \to \infty} s^2k(s) \to 0$ or
  \item $n$ is even and $\int_0^\infty sk(s) \, \der s$ is finite,
\end{enumerate}
then $M$ is isometric to $\mathbb{R}^n$.

\section{Geometric facts}\label{sec:preliminaries}

Throughout this work we will assume
$(M,g)$ to be an $n$-dimensional Cartan-Hadamard manifold with $n \geq 2$
unless otherwise stated.
We also assume unit speed parametrization for geodesics.

In this section we collect some preliminary facts on geodesics on Cartan-Hadamard manifolds, derivatives on the unit tangent bundle and related Jacobi fields, and tensor fields. These facts will be used in the subsequent sections.

\subsection{Behaviour of geodesics}\label{subsec:geodesics}
By the Cartan-Hadamard theorem the exponential map $\exp_x$
is defined on all of $T_x M$ and is a diffeomorphism
for every $x \in M$.
Hence every pair of points can be joined by a unique geodesic. Let $SM = \{ (x,v) \in TM \,;\, \abs{v}=1 \}$ be the unit sphere bundle, and if $(x,v) \in SM$ denote by $\gamma_{x,v}$ 
the unique geodesic with $\gamma(0) = x$ and $\dot{\gamma}(0) = v$. The triangle inequality implies that 
\begin{equation}\label{eq:geodesic-distance}
  d_g(\gamma_{x,v}(t),o) \geq |t| - d_g(x,o)
\end{equation}
for all $t \in \mR, o \in M$. 

We say that a geodesic $\gamma$
is escaping with respect to the point $o$ if
the function $t \mapsto d_g(\gamma(t),o)$ is strictly
increasing on the interval $[0,\infty)$. The set of all such
geodesics is denoted by $\esc{o}$.
For $\gamma_{x,v} \in \esc{o}$ the triangle inequality gives 
\begin{equation}\label{eq:espacing-geodesic-distance}
  d_g(\gamma_{x,v}(t),o) \geq
  \begin{cases}
    d_g(x,o), &\text{if } 0 \leq t \leq 2 d_g(x,o),\\
    t - d_g(x,o), &\text{if } 2 d_g(x,o) < t.
  \end{cases}
\end{equation}
However, since $(M,g)$ is a Cartan-Hadamard manifold, Jacobi field estimates give a stronger bound. For $\gamma_{x,v} \in \esc{o}$ one has (see \cite[Corollary 4.8.5]{Jos08} or \cite[Section 6.3]{Pet06})
\begin{equation}\label{eq:espacing-geodesic-distance-stronger}
  d_g(\gamma_{x,v}(t),o) \geq \sqrt{d_g(x,o)^2 + t^2}, \qquad t \geq 0.
\end{equation}

The following lemma is proved in~\cite{Leh16} in two dimensions. The proof in higher dimensions is identical, but we include a short argument for completeness.

\begin{lemma}\label{lma:escaping-direction}
Suppose $o \in M$. At least one of the geodesics
$\gamma_{x,v}$ and $\gamma_{x,-v}$ is in $\esc{o}$.
\end{lemma}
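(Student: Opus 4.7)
The plan is to exploit the fact, recalled in the introduction, that on a Cartan-Hadamard manifold the function $x \mapsto d_g(x,o)^2$ is strictly convex. Concretely, for the geodesic $\gamma_{x,v}$ I would study
\[
\phi(t) \mdef d_g(\gamma_{x,v}(t),o)^2, \qquad t \in \mathbb{R}.
\]
Since strict convexity of a function on $M$ means, by definition, strict convexity of its restriction to every geodesic, $\phi$ is a smooth strictly convex function on $\mathbb{R}$, and therefore $\phi'$ is strictly increasing.

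Next I would split into two cases according to the sign of $\phi'(0)$. If $\phi'(0) \geq 0$, then $\phi'(t) > \phi'(0) \geq 0$ for every $t > 0$, so $\phi$ is strictly increasing on $[0,\infty)$; since $\sqrt{\,\cdot\,}$ is strictly increasing on $[0,\infty)$, the same holds for $d_g(\gamma_{x,v}(t),o) = \sqrt{\phi(t)}$, i.e.\ $\gamma_{x,v} \in \esc{o}$. If instead $\phi'(0) < 0$, I would apply this observation to the reversed geodesic $\gamma_{x,-v}(t) = \gamma_{x,v}(-t)$: its associated squared-distance function is $\tilde\phi(t) \mdef \phi(-t)$, which satisfies $\tilde\phi'(0) = -\phi'(0) > 0$, and the previous case yields $\gamma_{x,-v} \in \esc{o}$.

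There is no real obstacle here; the argument is essentially a one-liner from strict convexity of $d_g(\cdot,o)^2$. The only minor points worth a sentence each in the written proof are the degenerate case $x = o$, which is automatic because then $\phi(0) = 0$ is the minimum of the strictly convex $\phi$, so $\phi'(0) = 0$ and both $\gamma_{x,\pm v}$ escape, and the equivalence between strict monotonicity of $\phi$ and of $\sqrt{\phi}$, which follows from $\phi \geq 0$. No appeal to Jacobi field estimates such as~\eqref{eq:espacing-geodesic-distance-stronger} is needed for the lemma itself; those will enter only later, once one wishes to use the existence of an escaping direction to obtain quantitative decay.
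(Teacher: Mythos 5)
Your argument is correct and is essentially identical to the paper's own proof, which also considers $h(t)=d_g(\gamma_{x,v}(t),o)^2$, invokes its strict convexity on a Cartan--Hadamard manifold, and splits on the sign of $h'(0)$. The extra remarks about the case $x=o$ and passing from $\phi$ to $\sqrt{\phi}$ are fine but not needed beyond what the paper already records.
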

\begin{proof}
Since $(M,g)$ is a Cartan-Hadamard manifold, the function $h(t) = d_g(\gamma_{x,v}(t),o)^2$ is strictly convex, $h'' > 0$, on $\mR$. If $h'(0) \geq 0$ then $\gamma_{x,v}$ is escaping, and if $h'(0) \leq 0$ then $\gamma_{x,-v}$ is escaping.
\end{proof}

\subsection{On the geometry of the unit tangent bundle}\label{subsec:tm}
We first briefly explain the splitting of the tangent bundle of $SM$
into horizontal and vertical bundles. Then we give a short discussion on geodesics of $SM$. Finally, we include a proof that $SM$ is complete when $M$ is.

\subsubsection{The structure of the tangent bundle}
The following discussion is based on~\cite{Pat99},~\cite{PSU15},
where these topics are considered in more detail. We denote by $\pi \colon TM \to M$ the usual base point map
$\pi(x,v) = x$.
The connection map $\cm \colon T(TM) \to TM$ of the Levi-Civita
connection $\nabla$ of $M$
is defined as follows. Let $\xi \in T_{x,v} TM$
and $c \colon (-\eps,\eps) \to TM$ be a curve such that
$\dot{c}(0) = \xi$. Write
$
  c(t) = (\gamma(t),Z(t)),
$
where $Z(t)$ is a vector field along the curve $\gamma$,
and define
\begin{equation*}
  \cm(\xi) \mdef D_t Z(0) \in T_x M.
\end{equation*}

The maps $\cm$ and $d\pi$ yield a splitting
\begin{equation}\label{eq:TTM-splitting}
  T_{x,v} TM = \tilde\hbundle(x,v) \oplus \tilde\vbundle(x,v)
\end{equation}
where $\tilde\hbundle(x,v) = \ker \cm$ is the horizontal bundle
and $\tilde\vbundle(x,v) = \ker d_{x,v} \pi$
is the vertical bundle. Both are $n$-dimensional subspaces
of $T_{x,v} TM$.

On $TM$ we define the Sasaki metric $\sasaki{g}$ by
\begin{equation*}
  \brgs{v,w}
  = \brg{\cm (v),\cm (w)} + \brg{d\pi (v), d\pi (w)},
\end{equation*}
which makes $(TM,\sasaki{g})$ a Riemannian manifold
of dimension $2n$. The maps $\cm \colon \tilde\vbundle(x,v) \to T_x M$ and $d\pi
\colon \tilde\hbundle(x,v) \to T_x M$ are linear isomorphisms.
Furthermore, the splitting~\eqref{eq:TTM-splitting} is orthogonal with respect to $g_s$.
Using the maps $\cm$ and $d\pi$, we will identify vectors in the horizontal and vertical
bundles with corresponding vectors on $T_x M$.

The unit sphere bundle $SM$ was defined as
\begin{equation*}
  SM \mdef \bigcup_{x \in M} S_x M, \qquad S_x M \mdef \set{(x,v) \in T_xM}{|v|_g = 1}.
\end{equation*}
We will equip $SM$ with the metric induced by the Sasaki metric on $TM$. The geodesic flow $\phi_t(x,v) \colon \mR \times SM \to SM$
is defined as
\begin{equation*}
  \phi_t(x,v) \mdef (\gamma_{x,v}(t),\dot\gamma_{x,v}(t)).
\end{equation*}
The associated vector field is called the geodesic vector field
and denoted by $X$.

For $SM$ we obtain an orthogonal splitting
\begin{equation}\label{eq:TSM-splitting}
  T_{x,v} SM = \mR X(x,v) \oplus \hbundle(x,v) \oplus \vbundle(x,v)
\end{equation}
where $\mR X \oplus \hbundle(x,v) = \tilde\hbundle(x,v)$ and
$\vbundle(x,v) = \ker d_{x,v}(\pi|_{SM})$.
Both $\hbundle(x,v)$ and $\vbundle(x,v)$
have dimension $n-1$ and can be canonically identified
with elements in the codimension one subspace
${\{ v \}}^\perp \subset T_x M$ via
$d\pi$ and $\cm$, respectively. We will freely use this identification.

Following~\cite{PSU15}, if $u \in C^1(SM)$, then the gradient $\nablasm u$
has the decomposition
\begin{equation*}
  \nablasm u = (Xu)X + \hnabla u + \vnabla u,
\end{equation*}
according to~\eqref{eq:TSM-splitting}. The quantities 
$\hnabla u$ and $\vnabla u$ are called the horizontal
and the vertical gradients, respectively.
It holds that $\brg{\vnabla u(x,v),v} = 0$ and $\brg{\hnabla u(x,v),v} = 0$
for all $(x,v) \in SM$.

As discussed in~\cite{PSU15}, on two-dimensional manifolds the horizontal and vertical
gradients reduce to the horizontal and vertical vector fields
$X_\perp$ and $V$ via
\begin{equation*}
  \hnabla u(x,v) = -(X_\perp u(x,v)) v^\perp
  \middletext{and}
  \vnabla u(x,v) = (Vu(x,v)) v^\perp
\end{equation*}
where $v^\perp$ is such that $\{ v, v^\perp \}$
is a positive orthonormal basis of $T_x M$.
In~\cite{Leh16} the flows associated with
$X_\perp$ and $V$ were used to derive estimates for
$X_\perp u$ and $Vu$. We will proceed in a similar manner in the higher dimensional case.

Let $(x,v) \in SM$ and $w \in S_x M, \,w \perp v$.
We define $\hflow{w}{t} \colon \mR \to SM$ by $\hflow{w}{t}(x,v)
= (\gamma_{x,w}(t),V(t))$,
where $V(t)$ is the parallel transport of $v$ along $\gamma_{x,w}$. It holds
that
\begin{equation}\label{eq:hflow-K-dpi}
  \cm \left(\frac{\der}{\der t} \hflow{w}{t}(x,v)\valueat{t=0}\right) = 0
  \middletext{and}
  d\pi \left(\frac{\der}{\der t} \hflow{w}{t}(x,v)\valueat{t=0}\right) = w.
\end{equation}
We define $\vflow{w}{t} \colon \mR \to SM$ by
$\vflow{w}{t}(x,v) = (x,(\cos t)v + (\sin t)w)$. It holds that
\begin{equation}\label{eq:vflow-K-dpi}
  \cm \left(\frac{\der}{\der t} \vflow{w}{t}(x,v)\valueat{t=0}\right) = w
  \middletext{and}
  d\pi \left(\frac{\der}{\der t} \vflow{w}{t}(x,v)\valueat{t=0}\right) = 0.
\end{equation}
The following lemma states the relation 
between $\hflow{w}{t}$ and $\vflow{w}{t}$
and the horizontal and the vertical gradients of a function.

\begin{lemma}\label{lma:sm-gradients-flows}
Suppose $u$ is differentiable at $(x,v) \in SM$. Fix $w \in S_x M, w \perp v$.
Then it holds that
\begin{equation*}
  \brg{\hnabla u(x,v),w} = \frac{\der}{\der t}u(\hflow{w}{t}(x,v))
  \valueat{t=0}
 \end{equation*}
and
\begin{equation*}
  \brg{\vnabla u(x,v),w} = \frac{\der}{\der t}u(\vflow{w}{t}(x,v))
  \valueat{t=0}.
\end{equation*}
\end{lemma}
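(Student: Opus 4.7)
The plan is to unwind both identities directly from the chain rule together with the definition of the Sasaki metric and the orthogonal splitting of $T_{x,v}SM$ into $\mR X\oplus \hbundle\oplus\vbundle$.

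First I would compute the initial velocities $\xi_h \mdef \frac{d}{dt}\hflow{w}{t}(x,v)\big|_{t=0}$ and $\xi_v \mdef \frac{d}{dt}\vflow{w}{t}(x,v)\big|_{t=0}$ in $T_{x,v}SM$. The required values of $\cm$ and $d\pi$ on these tangent vectors are already given in equations~\eqref{eq:hflow-K-dpi} and~\eqref{eq:vflow-K-dpi}: $\xi_h$ lies in the horizontal bundle $\tilde{\hbundle}(x,v)$ and is identified with $w$ via $d\pi$, while $\xi_v$ lies in the vertical bundle $\tilde{\vbundle}(x,v)$ and is identified with $w$ via $\cm$. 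Since $w\perp v$, the vector $\xi_h$ in fact lies in $\hbundle(x,v)$ rather than in the line $\mR X(x,v)$ (whose image under $d\pi$ is $\mR v$), and of course $\xi_v \in \vbundle(x,v)$ because it is tangent to $SM$ (differentiating $|v(t)|^2=1$ at $t=0$ shows $\langle\cm(\xi_v),v\rangle = \langle w,v\rangle = 0$).

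Next, by the chain rule and the definition of the Sasaki gradient $\nablasm u$,
\begin{equation*}
\frac{d}{dt}u(\hflow{w}{t}(x,v))\valueat{t=0} = \langle \nablasm u(x,v),\xi_h\rangle_{\sasaki{g}}, \qquad
\frac{d}{dt}u(\vflow{w}{t}(x,v))\valueat{t=0} = \langle \nablasm u(x,v),\xi_v\rangle_{\sasaki{g}}.
\end{equation*}
Using the decomposition $\nablasm u = (Xu)X + \hnabla u + \vnabla u$ together with the orthogonality of the splitting~\eqref{eq:TSM-splitting} with respect to $\sasaki{g}$, the first inner product reduces to $\langle\hnabla u(x,v),\xi_h\rangle_{\sasaki{g}}$ and the second to $\langle\vnabla u(x,v),\xi_v\rangle_{\sasaki{g}}$. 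Finally, by the definition of the Sasaki metric, the inner product $\langle\cdot,\cdot\rangle_{\sasaki{g}}$ on $\hbundle(x,v)$ pulls back via $d\pi$ to $\langle\cdot,\cdot\rangle_g$ on $\{v\}^\perp$, and on $\vbundle(x,v)$ it pulls back via $\cm$ to the same. Under the canonical identifications this gives $\langle\hnabla u,\xi_h\rangle_{\sasaki{g}} = \langle\hnabla u(x,v),w\rangle_g$ and $\langle\vnabla u,\xi_v\rangle_{\sasaki{g}} = \langle\vnabla u(x,v),w\rangle_g$, which are the claimed identities.

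The only subtle point is bookkeeping with the three identifications at play (horizontal/vertical vectors with elements of $\{v\}^\perp\subset T_xM$, and the splitting $\tilde\hbundle = \mR X\oplus\hbundle$), but once $\xi_h$ and $\xi_v$ are placed in the correct summands of~\eqref{eq:TSM-splitting} the computation is immediate; there is no genuine obstacle beyond this check.
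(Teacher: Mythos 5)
Your argument is correct and is essentially the paper's own proof: both apply the chain rule, use equations~\eqref{eq:hflow-K-dpi} and~\eqref{eq:vflow-K-dpi} to identify the initial velocities of the two flows with $w$ in the horizontal and vertical summands, and then invoke the definition of the Sasaki metric and the orthogonality of the splitting~\eqref{eq:TSM-splitting}. Your extra remarks verifying that $\xi_h\in\hbundle(x,v)$ (since $w\perp v$) and $\xi_v\in\vbundle(x,v)$ are just a more explicit account of the bookkeeping the paper leaves implicit.
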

\begin{proof}
Using the chain rule and the equations~\eqref{eq:hflow-K-dpi} we get
\begin{equation*}
  \frac{\der}{\der t}u(\hflow{w}{t}(x,v))
  \valueat{t=0}
  = \brgs{\nabla_{SM} u(\hflow{w}{t}(x,v)),
    \frac{\der}{\der t}\hflow{w}{t}(x,v)}\valueat{t=0}
  = \brg{\hnabla u(x,v), w}.
\end{equation*}
For $\vnabla$ we use the equations~\eqref{eq:vflow-K-dpi}
in a similar fashion.
\end{proof}

The maps $\hflow{w}{t}$ and $\vflow{w}{t}$ are related to normal Jacobi
fields along geodesics. We can define
\begin{equation*}
  J^{\lh}_w(t) \mdef  \frac{\der}{\der s}\pi\left(\phi_t(\hflow{w}{s}(x,v))\right)
  \valueat{s=0}
  =  d_{\phi_t(x,v)}\pi\left(\frac{\der}{\der s}
  \phi_t(\hflow{w}{s}(x,v))\valueat{s=0} \right).
\end{equation*}
Since $\Gamma(s,t) = \pi\left(\phi_t(\hflow{w}{s}(x,v))\right)$ is a variation of $\gamma_{x,v}$ along geodesics, $J^{\lh}_w(t)$ is a Jacobi field along $\gamma_{x,v}$. It has the initial conditions
$J^{\lh}_w(0) = w$ and $D_t J^{\lh}_w(0) = 0$ by the symmetry lemma (see e.g.~\cite{Lee97}).

Replacing $\hflow{w}{s}$ with $\vflow{w}{s}$
gives a Jacobi field $J^{\lv}_w(t)$ with
the initial conditions $J^{\lv}_w(t)(0) = 0$ and $D_t J^{\lv}_w(t)(0) = w$.
In the both cases the Jacobi field is normal because $\br{v,w}_g = 0$.

By the symmetry lemma 
\begin{equation*}
  \cm \left(\frac{\der}{\der s}
  \phi_t(\hflow{w}{s}(x,v))\valueat{s=0} \right)
  = D_s \doo_t \gamma_{\hflow{w}{s}(x,v)}(t)\valueat{s=0}
  = D_t \doo_s \gamma_{\hflow{w}{s}(x,v)}(t)\valueat{s=0}
  = D_t J^{\lh}_w(t).
\end{equation*}
From the definition of the Sasaki metric we then see that
\begin{equation*}
  \babr{\nablasm u(x,v) , \frac{\der}{\der s}
  \phi_t(\hflow{w}{s}(x,v))\valueat{s=0}}_{\sasaki{g}}
  = \babr{\hnabla u(x,v), J^{\lh}_w(t)}_g + \babr{\vnabla u(x,v), D_t J^{\lh}_w(t)}_g.
\end{equation*}
and
\begin{equation*}
  \babr{\nablasm u(x,v) , \frac{\der}{\der s}
  \phi_t(\vflow{w}{s}(x,v))\valueat{s=0}}_{\sasaki{g}}
  = \babr{\hnabla u(x,v), J^{\lv}_w(t)}_g + \babr{\vnabla u(x,v), D_t J^{\lv}_w(t)}_g.
\end{equation*}

\begin{remark}\label{rmk:diff_ae} The constructions in this subsection remain valid at a.e.\ $(x,v) \in SM$ if one assumes that $u$ is in the space $W_\text{loc}^{1,\infty}(SM)$. Functions in $W_\text{loc}^{1,\infty}(SM)$ are characterized as locally Lipschitz functions, and further by Rademacher's theorem, differentiable almost everywhere and weak gradients equal to gradients almost everywhere (see e.g. \cite[Chapters 5.8.2--5.8.3]{Eva98}).
\end{remark}

\subsubsection{Geodesics on the unit tangent bundle} Next we describe some facts related to geodesics on $SM$ (see e.g.~\cite{BBNV03} and references therein). Let $R(U,V)$ denote the Riemannian curvature tensor. A curve $\Gamma(t) = (x(t),V(t))$ on $SM$ is a geodesic if and only if
\begin{equation}
\label{eq:smgeod}
\left\{
\begin{aligned}
\nabla_{\dot{x}}\dot{x} &= -R(V,\nabla_{\dot{x}}V)\dot{x} \\
\nabla_{\dot{x}}\nabla_{\dot{x}}V &= -\abs{\nabla_{\dot{x}}V}_g^2 V, \quad \abs{\nabla_{\dot{x}}V}_g^2 \text{ is a constant along $x(t)$}
\end{aligned}
\right.
\end{equation} holds for every $t$ in the domain of $\Gamma$ (see \cite[Equations 5.2]{Sas62}). Given $(x,v) \in SM$, the horizontal lift of $w \in T_xM$ is denoted by $w^\lh$, i.e. the unique vector $w^\lh \in T_{x,v}(SM)$ such that $d(\pi|_{SM})(w^\lh) = w$ and $\cm(w^\lh) = 0$, and the vertical lift $w^\lv$ is defined similarly. Initial conditions for $x, \dot{x}, V$ and $\nabla_{\dot{x}}V$ at $t = 0$ with $g(V(0),\nabla_{\dot{x}(0)}V(0)) = 0$ and $\abs{V(0)}_g = 1$ determine a unique geodesic $\Gamma = (x,V)$, by (\ref{eq:smgeod}), which satisfies the initial conditions $\Gamma(0) = (x(0),V(0))$ and $\dot{\Gamma}(0) = \dot{x}(0)^\lh+(\nabla_{\dot{x}(0)}V(0))^\lv$ where the lifts are done with respect to $(x(0),V(0)) \in SM$. The geodesics of $SM$ are of the following three types:
\begin{enumerate}
\item If $\nabla_{\dot{x}(0)}V(0) = 0$, then $\Gamma$ is a parallel transport of $V(0)$ along the geodesic $x$ on $M$ (horizontal geodesics).
\item If $\dot{x}(0) = 0$, then $\Gamma$ is a great circle on the fibre $\pi^{-1}(x(0))$ and $x(t) = x(0)$ (vertical geodesics, in this case one interprets the system (\ref{eq:smgeod}) via $\nabla_{\dot{x}} = D_t$).
\item All the rest, i.e. solutions of (\ref{eq:smgeod}) with initial conditions $\dot{x}(0) \neq 0$ and $\nabla_{\dot{x}(0)}V(0) \neq 0$ (oblique geodesics).
\end{enumerate}
We state the following lemma for the sake of clarity.

\begin{lemma}\label{lem:flowsAreGeod} Fix $(x,v) \in SM$ and $w \in S_xM$, $w \,\bot\, v$. Then $\phi_t(x,v)$ and $\hflow{w}{t}(x,v)$ are horizontal unit speed geodesics and $\vflow{w}{t}(x,v)$ is a vertical unit speed geodesic with respect to $t$.
\end{lemma}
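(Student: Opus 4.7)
The plan is to verify the lemma by direct application of the geodesic characterization~\eqref{eq:smgeod}, checking in each case that the curve solves the system with the appropriate initial data, and then computing the Sasaki norm of the initial tangent using the identifications~\eqref{eq:hflow-K-dpi} and~\eqref{eq:vflow-K-dpi}. Since the Sasaki metric is parallel along geodesics in $(SM,\sasaki{g})$, unit speed at $t=0$ propagates to all $t$, so I only need to check speed at the initial time.

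First I would handle $\phi_t(x,v) = (\gamma_{x,v}(t),\dot\gamma_{x,v}(t))$. Here $x(t) = \gamma_{x,v}(t)$ so $\nabla_{\dot x}\dot x = 0$ because $\gamma_{x,v}$ is a geodesic on $M$, and $V(t) = \dot\gamma_{x,v}(t)$ gives $\nabla_{\dot x} V = \nabla_{\dot\gamma}\dot\gamma = 0$. Both equations in~\eqref{eq:smgeod} are trivially satisfied, and since $\nabla_{\dot x(0)} V(0) = 0$ this is case (1), a horizontal geodesic. Similarly, for $\hflow{w}{t}(x,v) = (\gamma_{x,w}(t), V(t))$ with $V$ the parallel transport of $v$ along $\gamma_{x,w}$, one has $\nabla_{\dot x}\dot x = 0$ and $\nabla_{\dot x} V = 0$ by construction, again yielding case (1).

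Next I would treat $\vflow{w}{t}(x,v) = (x, (\cos t)v + (\sin t)w)$. Here $x(t) \equiv x$ so $\dot x \equiv 0$, which places us in case (2) of the classification, with $\nabla_{\dot x}$ interpreted as $D_t$. A direct computation on the fibre (using $\br{v,w}_g = 0$ and $|v|_g = |w|_g = 1$) gives $D_t V = -(\sin t) v + (\cos t) w$, hence $D_t D_t V = -V$ and $|D_t V|_g^2 \equiv 1$, so the second line of~\eqref{eq:smgeod} holds with constant speed $1$ along the fibre. The orthogonality $g(V(0), D_{t}V(0)) = g(v,w) = 0$ also holds, so the initial data is admissible.

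Finally I would check unit speed at $t = 0$ using the Sasaki metric. For $\phi_t$, the tangent $\dot\Gamma(0) = v^h + 0^v$ has $\sasaki{g}$-norm $|v|_g = 1$. By~\eqref{eq:hflow-K-dpi}, the initial tangent of $\hflow{w}{t}(x,v)$ is $w^h$, with Sasaki norm $|w|_g = 1$; by~\eqref{eq:vflow-K-dpi}, the initial tangent of $\vflow{w}{t}(x,v)$ is $w^v$, again with Sasaki norm $|w|_g = 1$. This completes the verification. There is no substantive obstacle here; the only mild care needed is to interpret $\nabla_{\dot x}$ as $D_t$ in the degenerate case $\dot x \equiv 0$ and to keep track of horizontal/vertical lifts when reading off the initial tangent vectors.
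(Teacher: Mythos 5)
Your proposal is correct and follows essentially the same route as the paper: verify each curve against the system~\eqref{eq:smgeod} (identifying which case of the classification it falls into), then read off the initial tangent via~\eqref{eq:hflow-K-dpi} and~\eqref{eq:vflow-K-dpi} and compute its Sasaki norm, with constant speed of geodesics handling all other $t$. You simply spell out the computations that the paper's one-line proof leaves implicit.
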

\begin{proof} The fact that $\phi_t(x,v)$ and $\hflow{w}{t}(x,v)$ are horizontal geodesics and $\vflow{w}{t}(x,v)$ is a vertical geodesic follows immediately from their definitions and the above discussion based on the system of differential equations (\ref{eq:smgeod}). The fact that $\phi_t(x,v)$, $\hflow{w}{t}(x,v)$ and $\vflow{w}{t}(x,v)$ are unit speed follows from the equations (\ref{eq:hflow-K-dpi}) and (\ref{eq:vflow-K-dpi}) and the definition of the Sasaki metric.
\end{proof}

Lemma \ref{lem:flowsAreGeod} allows us to derive the following formulas which are used in the proof of Lemma \ref{lma:uf-gradient-estimates}.

\begin{corollary}\label{cor:geodFlowDiff} Let $(x,v) \in SM$. Assume that $Y \in T_{x,v}(SM)$ has the decomposition \[Y = aX(x,v) + H + V, \quad H \in \hbundle(x,v),  V \in \vbundle(x,v), a \in \mR.\] Then 
\[\begin{split}
(D\phi_t)_{x,v}(aX(x,v)) &= aX(\phi_t(x,v)), \\
(D\phi_t)_{x,v}(H) &= |H|_{\sasaki{g}}\Big[(J_{w_\lh}^\lh(t))^\lh + (D_t J_{w_\lh}^\lh(t))^\lv\Big],
 \\
(D\phi_t)_{x,v}(V) &= |V|_{\sasaki{g}}\Big[(J_{w_\lv}^\lv(t))^\lh + (D_t J_{w_\lv}^\lv(t))^\lv\Big],
\end{split}\]
where $D\phi_t$ is the differential of $\phi_t$, $w_\lh = d\pi(H)/\abs{d\pi(H)}_g$ and $w_\lv = K_{\nabla}(V)/\abs{K_{\nabla}(V)}_g$. Moreover, $(D\phi_t)_{x,v}(X(x,v))$ is orthogonal to $(D\phi_t)_{x,v}(H)$ and $(D\phi_t)_{x,v}(V)$.
\end{corollary}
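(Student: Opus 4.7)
My plan is to handle the three summands $aX|_{x,v}$, $H$, $V$ separately by exhibiting, in each case, an explicit curve through $(x,v)$ whose initial velocity is the relevant vector, then applying $\phi_t$ and differentiating at $s=0$. For the $X$-direction I will use the group property of the geodesic flow: the curve $s \mapsto \phi_{as}(x,v)$ has initial velocity $aX|_{x,v}$, and its image under $\phi_t$ is $\phi_{t+as}(x,v)$, whose $s$-derivative at $0$ is $aX|_{\phi_t(x,v)}$. This gives the first formula.

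For the horizontal case, since $H \in \hbundle(x,v)$ we have $\cm(H)=0$, and the definition of the Sasaki metric forces $|H|_{\sasaki{g}} = |d\pi(H)|_g$. Setting $w_h = d\pi(H)/|d\pi(H)| \in S_xM$ with $w_h \perp v$, the identities \eqref{eq:hflow-K-dpi} say that $\frac{\der}{\der s}\hflow{w_h}{s}(x,v)|_{s=0}$ has $\cm = 0$ and $d\pi = w_h$, so after scaling the curve $c(s) = \hflow{w_h}{|H|_{\sasaki{g}}\, s}(x,v)$ has initial velocity exactly $H$. Applying $\phi_t$ and using the chain rule, $(D\phi_t)_{x,v}(H) = |H|_{\sasaki{g}} \,\tfrac{\der}{\der s}\phi_t(\hflow{w_h}{s}(x,v))|_{s=0}$. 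By the definition of $J^h_{w_h}$ in the excerpt, the $d\pi$ component of this tangent vector is $J^h_{w_h}(t)$, and the symmetry-lemma computation recalled in the excerpt shows that its $\cm$ component is $D_t J^h_{w_h}(t)$. Translating through the horizontal-vertical splitting yields the stated expression $(D\phi_t)_{x,v}(H) = |H|_{\sasaki{g}}\bigl[(J^h_{w_h}(t))^h + (D_t J^h_{w_h}(t))^v\bigr]$. The argument for $V \in \vbundle(x,v)$ is entirely parallel, replacing $\hflow{w_h}{s}$ by $\vflow{w_v}{s}$ with $w_v = \cm(V)/|\cm(V)|$, using \eqref{eq:vflow-K-dpi} to match initial velocities, and using the initial conditions $J^v_{w_v}(0)=0$, $D_t J^v_{w_v}(0)=w_v$.

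For the orthogonality claim, I observe that the Jacobi fields $J^h_{w_h}$ and $J^v_{w_v}$ both have \emph{normal} initial data (position and velocity perpendicular to $v$, since $w_h, w_v \perp v$), so they remain perpendicular to $\dot\gamma_{x,v}(t)$ for all $t$. Hence the horizontal lifts $(J^h_{w_h}(t))^h, (J^v_{w_v}(t))^h$ lie in $\hbundle(\phi_t(x,v))$ and the vertical lifts lie in $\vbundle(\phi_t(x,v))$. Since $(D\phi_t)_{x,v}(X|_{x,v}) = X|_{\phi_t(x,v)}$ lies in the $\mR X$ factor, and the Sasaki splitting $\mR X \oplus \hbundle \oplus \vbundle$ is orthogonal, the orthogonality follows. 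There is no genuinely hard step in this plan; the only care needed is the bookkeeping that scaling the flow curves by $|H|_{\sasaki{g}}$ or $|V|_{\sasaki{g}}$ produces exactly $H$ or $V$ as initial velocity, and the verification that the normal Jacobi field property is what rules out any $\mR X$ component in the images of $H$ and $V$.
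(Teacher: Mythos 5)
Your proposal is correct and follows essentially the same route as the paper: pick the explicit curves $\phi_s$, $\hflow{w_h}{s}$, $\vflow{w_v}{s}$, match initial velocities using \eqref{eq:hflow-K-dpi} and \eqref{eq:vflow-K-dpi} together with $|H|_{\sasaki{g}}=|d\pi(H)|_g$, push forward by $\phi_t$, and identify the horizontal and vertical components of the result with $J(t)$ and $D_tJ(t)$ via the symmetry-lemma computation. Your justification of the orthogonality claim via normality of the Jacobi fields is slightly more explicit than the paper's, but it is the same underlying fact.
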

\begin{proof} Lemma~\ref{lem:flowsAreGeod} gives that $\phi_s(x,v)$, $\hflow{w_\lh}{s}(x,v)$ and $\vflow{w_\lv}{s}(x,v)$ are unit speed geodesics on $SM$. If $\Gamma(s) = \phi_s(x,v)$, then $\Gamma(s)$ is a unit speed geodesic on $SM$, $\dot{\Gamma}(0) = X(x,v)$, and 
\[
  (D\phi_t)_{x,v}(X(x,v))
  = D\phi_t(\dot{\Gamma}(0)) = (\phi_t \circ \Gamma)'(0) = X(\phi_t(x,v)).
\]
Moreover, using the unit speed geodesic $\Gamma(s) = \hflow{w_\lh}{s}(x,v)$ on $SM$,
and using the formulas after Lemma~\ref{lma:sm-gradients-flows},
gives
\[\begin{split}
  (D\phi_t)_{x,v}(H) &= D\phi_t(|H|_{\sasaki{g}}\dot{\Gamma}(0))
  = |H|_{\sasaki{g}}(\phi_t \circ \Gamma)'(0) \\
  &= |H|_{\sasaki{g}}\Big[(J_{w_\lh}^\lh(t))^\lh + (D_t J_{w_\lh}^\lh(t))^\lv\Big]
	\end{split}
\]
which is orthogonal to $X(\phi_t(x,v))$.
Finally, the unit speed geodesic $\Gamma(s) = \vflow{w_\lv}{s}(x,v)$ on $SM$ gives
\[\begin{split}
  (D\phi_t)_{x,v}(V) &= D\phi_t(|V|_{\sasaki{g}}\dot{\Gamma}(0))
  = |V|_{\sasaki{g}}(\phi_t \circ \Gamma)'(0) \\
  &= |V|_{\sasaki{g}}\Big[(J_{w_\lv}^\lv(t))^\lh + (D_t J_{w_\lv}^\lv(t))^\lv\Big]
	\end{split}
\]
which is also orthogonal to $X(\phi_t(x,v))$.
\end{proof}

\subsubsection{Completeness of the unit tangent bundle}

We will need the fact that $SM$ is complete when $M$ is complete. This need arises from theory of Sobolev spaces on manifolds (see Section \ref{sec:proofs}). We could not find a reference so a proof is included.

\begin{lemma}\label{lem:SMcomplete} Let $M$ be a complete Riemannian manifold with or without boundary. Then $SM$ is complete.
\end{lemma}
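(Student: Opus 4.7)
The plan is to verify metric completeness of $SM$ by a Cauchy sequence argument, using that the base projection $\pi\colon SM\to M$ is $1$-Lipschitz and that each fibre $S_xM$ is compact. The Lipschitz bound is immediate from the Sasaki metric: for a $C^1$ curve $\Gamma(t)=(x(t),V(t))$ in $SM$,
\begin{equation*}
  |\dot\Gamma(t)|_{\sasaki{g}}^2 = |\dot x(t)|_g^2 + |\nabla_{\dot x}V(t)|_g^2 \geq |(\pi\circ\Gamma)'(t)|_g^2,
\end{equation*}
so the $\sasaki{g}$-length of $\Gamma$ dominates the $g$-length of $\pi\circ\Gamma$, giving $d_g(\pi(z_1),\pi(z_2))\leq d_{SM}(z_1,z_2)$ for all $z_1,z_2\in SM$.

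Now let $(z_n)=(x_n,v_n)\subset SM$ be Cauchy. By the above, $(x_n)$ is Cauchy in $M$ and converges to some $x\in M$ by completeness. To compare the $v_n$, which live in distinct fibres, I would fix for every sufficiently large $n$ a smooth curve $\alpha_n\colon[0,1]\to M$ from $x_n$ to $x$ of length at most $d_g(x_n,x)+1/n$; such a curve is easily produced from a local chart around $x$ (a half-space chart if $x\in\partial M$). Let $P_n\colon T_{x_n}M\to T_xM$ denote parallel transport along $\alpha_n$, and set $\tilde v_n=P_nv_n\in S_xM$. Since $S_xM$ is isometric to the round sphere $S^{n-1}$ and thus compact, a subsequence $\tilde v_{n_k}$ converges to some $v\in S_xM$.

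To promote this to convergence in $SM$, I would concatenate two curves joining $z_{n_k}$ to $(x,v)$: first the parallel-transport lift $t\mapsto(\alpha_{n_k}(t),P_{n_k,t}v_{n_k})$ of $\alpha_{n_k}$, whose $\sasaki{g}$-length equals $L_g(\alpha_{n_k})$ because its vertical covariant derivative vanishes by definition of parallel transport; and second a minimizing great-circle arc in the fibre $S_xM$ from $\tilde v_{n_k}$ to $v$, whose $\sasaki{g}$-length equals the round distance $d_{S_xM}(\tilde v_{n_k},v)$, since a fibre curve has zero horizontal part and $\nabla_{\dot x}V=D_tV$ there. Both lengths tend to zero, hence $d_{SM}(z_{n_k},(x,v))\to 0$; as a Cauchy sequence with a convergent subsequence converges to the same limit, $z_n\to(x,v)$. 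The main obstacle is precisely that the $v_n$ live in different tangent spaces, which rules out any direct subsequential argument inside $SM$; parallel transport along a nearly minimizing curve is the canonical bridge to a single compact fibre, and the boundary case requires only the existence of short smooth curves $\alpha_n$, which is supplied by any chart adapted to $\partial M$.
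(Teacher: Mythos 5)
Your argument is correct, and it shares the paper's first step --- the $1$-Lipschitz bound $d_{g}(\pi(z_1),\pi(z_2)) \leq d_{g_s}(z_1,z_2)$ coming from the definition of the Sasaki metric, which forces the base points to converge to some $x \in M$ --- but it then takes a genuinely different route to lift convergence to $SM$. The paper's proof is softer: it chooses a compact neighbourhood $K$ of the limit point, notes that $\pi^{-1}(K)$ is homeomorphic to $K \times S^{n-1}$ and hence compact, observes that the Cauchy sequence eventually lies in $\pi^{-1}(K)$, and concludes because a Cauchy sequence in a compact metric space converges; no limit is ever exhibited and the fibre directions are never compared. You instead construct the limit explicitly: parallel transport along nearly minimizing curves moves all the $v_n$ into the single compact fibre $S_xM$, a subsequence of the transported vectors converges to some $v$, and $d_{g_s}(z_{n_k},(x,v))$ is bounded by the Sasaki length of a horizontal (parallel-transport) lift plus a vertical great-circle arc, both of which tend to zero; your computation that these lengths equal $L_g(\alpha_{n_k})$ and the round fibre distance respectively is correct, and the final appeal to ``Cauchy with a convergent subsequence implies convergent'' closes the argument. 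What your approach buys is an explicit limit and the reusable estimate $d_{g_s}((x_1,v_1),(x_2,v_2)) \leq L_g(\alpha) + d_{S_{x_2}M}(P_\alpha v_1, v_2)$; what the paper's buys is brevity, since compactness of $\pi^{-1}(K)$ makes all fibre-comparison issues vanish. One minor remark: the curves $\alpha_n$ with $L_g(\alpha_n) \leq d_g(x_n,x)+1/n$ need no chart construction at all --- their existence is immediate from the definition of $d_g$ as an infimum of lengths of piecewise $C^1$ curves, and parallel transport along such curves is standard.
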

\begin{proof} Let $(y^{(j)})$ be a Cauchy sequence in $(SM,d_{\sasaki{g}})$. We show that it converges in the topology induced by $\sasaki{g}$.
The definition of the Sasaki metric implies that 
\[
  L_{\sasaki{g}}(\Gamma) \geq \int_0^\tau \abs{d\pi_{\Gamma(t)}(\dot{\Gamma}(t))}_g \,\der t
  = L_g(\pi \circ \Gamma) \geq d_g(\pi(\Gamma(0)),\pi(\Gamma(\tau)))
\]
where $\Gamma: [0,\tau] \to SM$ is any piecewise $C^1$-smooth curve. Hence
\begin{equation}\label{eq:sasaki-comparison}
  d_{\sasaki{g}}(a,b) \geq d_g(\pi(a), \pi(b))
\end{equation}
for all $a, b \in SM$. The above inequality implies that $(\pi(y^{(j)}))$ is a Cauchy sequence in $(M,g)$ and converges, say to $p \in M$, by completeness of $M$.

Consider a coordinate neighborhood $U$ of $p$ in $M$, so that $\pi^{-1}(U)$ is diffeomorphic to $U \times S^{n-1}$. Choose an open set $V$ and a compact set $K$ so that $p \in V \subset K \subset U$. Now $\pi^{-1}(K)$ is homeomorphic to $K \times S^{n-1}$ which is compact as a product of two compact sets.
Since $\pi(y^{(j)}) \to p$, there exists $N$ such that $\pi(y^{(j)}) \in V$ for all $j \geq N$, and this implies $y^{(j)} \in \pi^{-1}(K)$ for all $j \geq N$. Hence $(y^{(j)})$ has a limit in $(\pi^{-1}(K),d_{\sasaki{g}}|_{\pi^{-1}(K)})$ since it is a Cauchy sequence, and thus $(y^{(j)})$ converges also in $(SM,d_{\sasaki{g}})$.
\end{proof}

\subsection{Symmetric covariant tensor fields}\label{subsec:tensors}
We denote by $\tf{m}(M)$ the set of $C^1$-smooth symmetric covariant $m$-tensor fields and by $\tf{m}_x(M)$ the
symmetric covariant $m$-tensors at point $x$. Following \cite{DS11} (where more details are also given), 
we define the map $\lambda_x \colon \tf{m}_x(M) \to C^\infty(S_x M)$,
\begin{equation*}
  \lambda_x(f)(v) = f_x(v,\dots,v)
\end{equation*}
which is given in local coordinates by 
\begin{equation*}
  \lambda_x (f_{i_1 \dots i_m} dx^{i_1} \otimes
  \dots \otimes dx^{i_m})(v)
  = f_{i_1 \dots i_m}(x)v^{i_1}\dots v^{i_m}.
\end{equation*}
The map $\lambda$ smoothly depends on $x$ and hence we get
an embedding $\lambda \colon \tf{m}(M) \to C^1(SM)$.
The map $\lambda$ identifies symmetric \emph{trace-free} covariant
$m$-tensor fields with spherical harmonics 
(with respect to $v$) of degree $m$ on $SM$. More precisely, if $\tf{m}_x(M)$ and $C^\infty(S_x M)$ are endowed
with their usual $L^2$-inner products, then
$\lambda_x$ is an isomorphism, and even an isometry
up to a factor, from the set of trace-free symmetric $m$-tensors at $x$ onto the set of spherical harmonics (with respect to $v$) of degree $m$ on $S_x M$ (see \cite[Lemma 2.4 and subsequent remarks]{DS11}). We will use this identification and do not always write $\lambda$ explicitly.

The symmetrization of a tensor is defined by
\begin{equation*}
  \sigma(\omega_1 \otimes \cdots \otimes \omega_m)
  = \frac{1}{m!}
  \sum_{\pi \in \Pi_m} \omega_{\pi(1)}
   \otimes \cdots \otimes \omega_{\pi(m)},
\end{equation*}
where $\Pi_m$ is the permutation group of $\{1,\dots,m\}$.
From the above expression we see that
if a covariant $m$-tensor field $f$
is in $E_\eta^1(M)$ or $P_\eta^1(M)$
for some $\eta > 0$, then so is $\sigma f$ too.
Furthermore, for $f \in \tf{m}(M)$ one has 
\begin{equation}\label{eq:sn=X}
  \lambda(\sigma \nabla f) = X\lambda(f).
\end{equation}
It follows from the last identity and the fundamental theorem of calculus that if $f \in P^1_{\eta}(M)$ for some $\eta > 0$, then $I_m(\sigma \nabla f) = 0$. This shows that $I_m$ always has a nontrivial kernel for $m \geq 1$, as described in the introduction.

The next lemma states how the decay properties
of a tensor field carry over to functions on $SM$.
\begin{lemma}\label{lma:tensor-sm-gradients}
Suppose $f \in S^m(M)$ and $\eta > 0$.
\begin{enumerate}
\item[(a)]
If $f \in E_\eta^1(M)$, then
\begin{equation*}
  \sup_{v \in S_x M}\absg{Xf(x,v)} \in E_\eta(M),\quad
  \sup_{v \in S_x M}\absg{\hnabla f(x,v)} \in E_\eta(M)
  \middletext{and}
  \sup_{v \in S_x M}\absg{\vnabla f(x,v)} \in E_\eta(M).
\end{equation*}

\item[(b)]
If $f \in P_\eta^1(M)$, then
\begin{equation*}
  \sup_{v \in S_x M}\absg{Xf(x,v)} \in P_{\eta+1}(M),\quad
  \sup_{v \in S_x M}\absg{\hnabla f(x,v)} \in P_{\eta+1}(M)
  \middletext{and}
  \sup_{v \in S_x M}\absg{\vnabla f(x,v)} \in P_\eta(M).
\end{equation*}
\end{enumerate}
\end{lemma}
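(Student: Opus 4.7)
The plan is to use the identification $\lambda$ and the characterizations of $Xu$, $\hnabla u$, $\vnabla u$ via the flows $\phi_t$, $\hflow{w}{t}$, $\vflow{w}{t}$ from Lemma~\ref{lma:sm-gradients-flows} to rewrite each of the three quantities on $SM$ as an evaluation of either $\nabla f$ or $f$ at unit tangent vectors, and then bound pointwise by $|\nabla f|_g$ or $|f|_g$.

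First, set $u = \lambda(f) \in C^1(SM)$. For the geodesic derivative, the identity~\eqref{eq:sn=X} gives $Xu(x,v) = \lambda(\sigma \nabla f)(x,v) = (\nabla f)(v;v,\dots,v)$, where the first slot is the covariant derivative direction. Since $|v|_g = 1$, contracting with unit vectors yields $|Xu(x,v)| \leq |\nabla f|_g(x)$, uniformly in $v$, so
\[
  \sup_{v \in S_xM} |Xu(x,v)| \leq |\nabla f|_g(x).
\]
For the horizontal gradient, fix $w \in S_xM$ with $w \perp v$. By Lemma~\ref{lma:sm-gradients-flows}, $\langle\hnabla u(x,v),w\rangle = \frac{d}{dt}|_{t=0} u(\gamma_{x,w}(t),V(t))$, where $V(t)$ is the parallel transport of $v$ along $\gamma_{x,w}$. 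Because parallel transport is compatible with $\nabla$ and $|V(t)|_g \equiv 1$, differentiating $f_{\gamma_{x,w}(t)}(V(t),\dots,V(t))$ in $t$ at $0$ gives $(\nabla_w f)(v,\dots,v)$. Taking $w$ over an orthonormal basis of $\{v\}^\perp$ and applying Cauchy--Schwarz yields $|\hnabla u(x,v)| \leq |\nabla f|_g(x)$.

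For the vertical gradient, similarly $\langle\vnabla u(x,v),w\rangle = \frac{d}{dt}|_{t=0} f_x((\cos t)v + (\sin t)w,\dots,(\cos t)v+(\sin t)w)$. Differentiating and using symmetry of $f$, this equals $m\,f_x(w,v,\dots,v)$. Since the arguments are unit vectors, $|\langle\vnabla u(x,v),w\rangle| \leq m\,|f|_g(x)$, and summing over an orthonormal basis of $\{v\}^\perp$ gives $|\vnabla u(x,v)| \leq C_{n,m}\,|f|_g(x)$.

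With these three pointwise bounds in hand, the conclusions follow directly from the definitions. In case (a), $f \in E_\eta^1(M)$ means $|f|_g(x) + |\nabla f|_g(x) \leq C e^{-\eta d(x,o)}$; each of $\sup_v |Xu|$, $\sup_v |\hnabla u|$, $\sup_v |\vnabla u|$ is then dominated by a constant multiple of $e^{-\eta d(x,o)}$ and hence lies in $E_\eta(M)$. In case (b), $f \in P_\eta^1(M)$ means $|f|_g(x) \leq C(1+d(x,o))^{-\eta}$ and $|\nabla f|_g(x) \leq C(1+d(x,o))^{-\eta-1}$; the bounds above then place $\sup_v|Xu|$ and $\sup_v|\hnabla u|$ in $P_{\eta+1}(M)$, while $\sup_v|\vnabla u|$ lies in $P_\eta(M)$. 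The work is entirely routine once the three pointwise identities are established; the only point requiring care is the covariant differentiation of $f$ along $V(t)$ in the horizontal case, which is handled cleanly by the parallel transport argument and does not constitute a real obstacle.
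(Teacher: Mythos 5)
Your proof is correct and arrives at exactly the three pointwise bounds the paper uses --- $\sup_v|Xf|\le|\nabla f|_g$, $\sup_v|\hnabla f|\le|\nabla f|_g$ and $\sup_v|\vnabla f|\le C_m|f|_g$ --- after which both arguments conclude identically from the definitions of $E^1_\eta(M)$ and $P^1_\eta(M)$. The only real difference is how the pointwise identities are derived: the paper works in local normal coordinates at $x$ with the explicit representations of $Xf$, $\hnabla f$, $\vnabla f$ from the appendix of \cite{PSU15} (using, e.g., $\partial_{v_j}v^k=\delta^k_j-v_jv^k$ and the orthogonality of the $X$- and $\hnabla$-components), whereas you use the flow characterization of Lemma~\ref{lma:sm-gradients-flows} to get the coordinate-free identities $\langle\hnabla f(x,v),w\rangle=(\nabla_w f)(v,\dots,v)$ (parallel transport kills the extra terms) and $\langle\vnabla f(x,v),w\rangle=m\,f_x(w,v,\dots,v)$ (differentiating along the great circle), followed by Cauchy--Schwarz over an orthonormal basis of $\{v\}^\perp$. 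Your route is cleaner and avoids importing the coordinate formulas; the paper's has the advantage of reusing machinery it needs anyway. The only cosmetic remark is that your vertical constant $C_{n,m}$ can in fact be taken to be $m$ (the projections onto an orthonormal basis of $\{v\}^\perp$ should be summed in square, not each bounded separately), but any constant suffices for membership in the decay classes, so nothing is lost.
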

\begin{proof}
(a)
The result for $Xf$ follows from~\eqref{eq:sn=X}.
To prove the other statements
we take $x \in M$ and use local normal coordinates
$(x^1,\dots,x^n)$ centered
at $x$ and the associated coordinates
$(v^1,\dots,v^n)$ for $T_x M$.
In these coordinates
$f(x) = f_{i_1 \dots i_m}(x) \,dx^{i_1} \otimes \dots \otimes dx^{i_m}$
and
$\nabla f(x) = \doo_{x_j} f_{i_1 \dots i_m}(x)
\,dx^{j} \otimes dx^{i_1} \otimes \dots \otimes dx^{i_m}$.
We see that
\begin{equation*}
    \absg{f(x)} = {\left( \sum_{{i_1,\dots,i_m}}
    \abs{f_{i_1 \dots i_m}(x)}^2 \right)}^{1/2}
\middletext{and}
    \absg{\nabla f(x)} =
      {\left(\sum_{j,i_1,\dots,i_m} \abs{\doo_{x_j} f_{i_1 \dots i_m}(x)}^2 \right)}^{1/2}.
\end{equation*}

For $Xf, \hnabla f$ and $\vnabla f$ at $x$ we
have coordinate representations (see~\cite[Appendix A]{PSU15})
\begin{equation*}
  \begin{split}
    &Xf(x,v) = v^j \doo_{x_j} f,\\
    &\hnabla f(x,v) = \left(\doo^{x_j} f
    - (v^k \doo_{x_k}f)v^j \right)\doo_{x_j},\\
    &\vnabla f(x,v) = \doo^{v_j}f\doo_{x_j}.
  \end{split}
\end{equation*}
We get that
\begin{equation*}
  Xf(x,v)X(x,v) + \hnabla f(x,v) = \doo^{x_j} f \doo_{x_j}
  = \doo^{x_j} f_{i_1 \dots i_m}(x)
  v^{i_1} \dots v^{i_m}\doo_{x_j}
\end{equation*}
and, using the orthogonality of $Xf(x,v)X(x,v)$ and $\hnabla f(x,v)$ and the Cauchy-Schwarz inequality, 
\begin{equation*}
  \sup_{v \in S_x M} \absg{\hnabla f(x,v)}
  \leq
  {\left(\sum_{j,i_1,\dots,i_m} \abs{\doo_{x_j} f_{i_1 \dots i_m}(x)}^2 \right)}^{1/2} = \absg{\nabla f(x)}.
\end{equation*}
This implies that $  \sup_{v \in S_x M}\absg{\hnabla f(x,v)} \in E_\eta(M)$.

For $\vnabla f$, the identity $\partial_{v_j} v^k = \delta_j^k - v_j v^k$ (see \cite{PSU15}) implies that 
\begin{align*}
  \vnabla f(x,v)
  &= \sum_{j=1}^n (f_{j i_2 \dots i_m} v^{i_2} \dots v^{i_m} - f(x,v) v_j) \partial_{x_j} + \ldots + \sum_{j=1}^n (f_{i_1 \dots i_{m-1} j} v^{i_1} \dots v^{i_{m-1}} - f(x,v) v_j) \partial_{x_j} \\
  &= m \sum_{j=1}^n (f_{j i_2 \dots i_m} v^{i_2} \dots v^{i_m} - f(x,v) v_j) \partial_{x_j}
\end{align*}
Thus orthogonality and expanding the squares gives 
\begin{equation*}
  \absg{\vnabla f(x,v)}^2 = m^2 \sum_{j=1}^n \abs{f_{j i_2 \dots i_m}(x) v^{i_2} \dots v^{i_m}}^2
  \leq m^2 \sum_{i_1,\dots,i_m} \abs{f_{i_1 \dots i_m}(x)}^2 = m^2 \abs{f(x)}_g^2
\end{equation*}
which in turn implies that
$\sup_{v \in S_x M}\absg{\vnabla f(x,v)} \in E_\eta(M)$.
The proof for (b) is the same.
\end{proof}

\section{Growth estimates}\label{sec:estimates}

Throughout this section we assume that $f$ is a symmetric covariant $m$-tensor field in $P_\eta(M)$ for some $\eta > 1$. The main results in this section are Lemmas \ref{lma:uf-estimate} and \ref{lma:uf-gradient-estimates}. They state that if $f$ is such a tensor field, possibly with some additional decay at infinity, then the corresponding solution $u^f$ of the transport equation will have decay at infinity.

We begin by observing that the geodesic X-ray transform is well defined for such $f$.

\begin{lemma} \label{lemma_u_finite}
Let $f \in P_\eta(M)$ for some $\eta > 1$. For any $(x,v) \in SM$ one has 
\[
\int_{-\infty}^\infty \lvert f_{\gamma_{x,v}(t)}
  (\dot{\gamma}_{x,v}(t),\dots,\dot{\gamma}_{x,v}(t)) \rvert \, \der t < \infty.
\]
\end{lemma}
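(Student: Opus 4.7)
The plan is to dominate the integrand by $|f|_g \circ \gamma$ and then exploit the fact that every complete geodesic on a Cartan-Hadamard manifold is, after a shift of parameter, escaping in both directions from its closest point to the base point $o$. The Jacobi-field lower bound \eqref{eq:espacing-geodesic-distance-stronger} then forces $(1+d_g(\gamma(t),o))^{-\eta}$ to be integrable whenever $\eta>1$.

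First I would observe that since $\dot\gamma_{x,v}(t)$ is a unit vector, the algebraic bound for a symmetric $m$-tensor acting on $m$ copies of a unit vector yields
\[
  \bigl| f_{\gamma_{x,v}(t)}(\dot\gamma_{x,v}(t),\dots,\dot\gamma_{x,v}(t)) \bigr|
  \;\leq\; |f|_g(\gamma_{x,v}(t)),
\]
and by the assumption $f\in P_\eta(M)$ the right-hand side is bounded by $C(1+d_g(\gamma_{x,v}(t),o))^{-\eta}$. Thus it suffices to prove that $\int_{\mR} (1+d_g(\gamma(t),o))^{-\eta}\,\der t$ is finite for any maximal unit-speed geodesic $\gamma$.

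Next, the function $h(t)\mdef d_g(\gamma(t),o)^2$ is smooth and strictly convex on $\mR$ (this is the Cartan-Hadamard property recalled in Section~\ref{sec:introduction}), so $h$ attains its unique minimum at some $t_0\in\mR$. Setting $y=\gamma(t_0)$ and $w=\dot\gamma(t_0)$, the strict convexity forces both reparametrized geodesics $s\mapsto\gamma(t_0+s)=\gamma_{y,w}(s)$ and $s\mapsto\gamma(t_0-s)=\gamma_{y,-w}(s)$ to belong to $\esc{o}$. Applying the Jacobi-field estimate \eqref{eq:espacing-geodesic-distance-stronger} in both directions gives
\[
  d_g(\gamma(t_0+s),o) \;\geq\; \sqrt{d_g(y,o)^2+s^2} \;\geq\; |s|
  \qquad \text{for all } s\in\mR,
\]
hence $(1+d_g(\gamma(t),o))^{-\eta}\leq(1+|t-t_0|)^{-\eta}$.

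Finally, a translation in the $t$ variable and the elementary estimate
\[
  \int_{-\infty}^{\infty}(1+|s|)^{-\eta}\,\der s \;=\; \frac{2}{\eta-1}
\]
(valid precisely because $\eta>1$) complete the proof. The only conceptual step is recognizing that even though the hypothesis only gives escaping behaviour in one direction (Lemma~\ref{lma:escaping-direction}), applying it at the minimizer $y$ yields escaping in both directions simultaneously; everything else is a routine bound.
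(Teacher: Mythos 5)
Your proof is correct and follows essentially the same route as the paper: dominate the integrand by $C(1+d_g(\gamma(t),o))^{-\eta}$, reparametrize so that the origin is the closest point of the geodesic to $o$, and apply the estimate \eqref{eq:espacing-geodesic-distance-stronger} in both directions to reduce to the integrability of $(1+|t|)^{-\eta}$ for $\eta>1$. You merely spell out the details (the existence of the minimizer via strict convexity of $t\mapsto d_g(\gamma(t),o)^2$ and the two-sided escaping property) that the paper leaves implicit.
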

\begin{proof}
The assumption implies that $\lvert f_{\gamma_{x,v}(t)}
  (\dot{\gamma}_{x,v}(t),\dots,\dot{\gamma}_{x,v}(t)) \rvert \leq C(1+d(\gamma_{x,v}(t),o))^{-\eta}$. One can then change variables so that $t=0$ corresponds to the point on the geodesic that is closest to $o$, split the integral over $t \geq 0$ and $t \leq 0$, and use the fact that the integrands are $\leq C (1+\abs{t})^{-\eta}$ by the estimate \eqref{eq:espacing-geodesic-distance-stronger}.
\end{proof}

If $f \in P_{\eta}(M)$ for some $\eta > 1$, we may now define
\begin{equation*}
  u^f(x,v) \mdef \int_0^\infty f_{\gamma_{x,v}(t)}
  (\dot{\gamma}_{x,v}(t),\dots,\dot{\gamma}_{x,v}(t)) \, \der t.
\end{equation*}
It is straigthforward to see that
\begin{equation*}
  u^f(x,v) + (-1)^m u^f(x,-v) = I_m f(x,v)
\end{equation*}
for all $(x,v) \in SM$.

We have the usual reduction to the transport equation.

\begin{lemma}\label{lma:Xuf}
Let $f \in P_\eta(M)$ for some $\eta > 1$. Then $Xu^f = -f$.
\end{lemma}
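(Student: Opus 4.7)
The plan is to use the semigroup property of the geodesic flow together with the fundamental theorem of calculus. Recall that $X$ is the infinitesimal generator of $\phi_t$, so by definition $Xu^f(x,v) = \frac{d}{ds}\big|_{s=0} u^f(\phi_s(x,v))$, interpreted as a directional derivative along the flow. The heart of the argument is the identity $\phi_s(\phi_t(x,v)) = \phi_{s+t}(x,v)$, which in particular gives $\gamma_{\phi_s(x,v)}(t) = \gamma_{x,v}(s+t)$ and $\dot\gamma_{\phi_s(x,v)}(t) = \dot\gamma_{x,v}(s+t)$.

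Concretely, I would first note that $f \in P_\eta(M)$ with $\eta > 1$ and Lemma~\ref{lemma_u_finite} guarantee absolute integrability of the integrand, so $u^f$ is well defined on all of $SM$. Using the identity above and the substitution $\tau = s+t$, I compute
\begin{equation*}
u^f(\phi_s(x,v)) = \int_0^\infty f_{\gamma_{x,v}(s+t)}(\dot\gamma_{x,v}(s+t),\dots,\dot\gamma_{x,v}(s+t))\,dt = \int_s^\infty f_{\gamma_{x,v}(\tau)}(\dot\gamma_{x,v}(\tau),\dots,\dot\gamma_{x,v}(\tau))\,d\tau.
\end{equation*}
Hence
\begin{equation*}
u^f(\phi_s(x,v)) - u^f(x,v) = -\int_0^s f_{\gamma_{x,v}(\tau)}(\dot\gamma_{x,v}(\tau),\dots,\dot\gamma_{x,v}(\tau))\,d\tau.
\end{equation*}

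Since $f$ is continuous on $M$, the integrand on the right is a continuous function of $\tau$, so the fundamental theorem of calculus lets me differentiate in $s$ at $s=0$ to obtain
\begin{equation*}
Xu^f(x,v) = \frac{d}{ds}\Big|_{s=0}u^f(\phi_s(x,v)) = -f_{\gamma_{x,v}(0)}(\dot\gamma_{x,v}(0),\dots,\dot\gamma_{x,v}(0)) = -\lambda(f)(x,v),
\end{equation*}
which, under the identification $\lambda$ of symmetric $m$-tensor fields with their homogeneous polynomial representatives on $SM$ from Section~\ref{subsec:tensors}, is the statement $Xu^f = -f$.

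There is no real obstacle here beyond careful bookkeeping; the only point worth emphasizing is that the derivative is taken along the flow direction (the $X$-direction on $SM$), which is exactly the one direction in which $u^f$ is visibly smooth irrespective of whether $f$ has more regularity than continuity. No further differentiability of $u^f$ in horizontal or vertical directions is claimed by this lemma.
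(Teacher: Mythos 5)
Your proof is correct and follows essentially the same route as the paper: the paper's one-line proof computes $Xu^f(x,v) = \lim_{s\to 0} -\tfrac{1}{s}\int_0^s f_{\gamma_{x,v}(t)}(\dot\gamma_{x,v}(t),\dots,\dot\gamma_{x,v}(t))\,\der t = -f_x(v,\dots,v)$, which is exactly your semigroup-plus-fundamental-theorem-of-calculus argument written compactly. Your additional remark that the derivative is only taken along the flow direction, where $u^f$ is visibly differentiable, is a worthwhile clarification but not a departure from the paper's reasoning.
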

\begin{proof}
By definition 
\begin{equation*}
  Xu^f(x,v) = \lim_{s \to 0} -\frac{1}{s}\int_0^s
  f_{\gamma_{x,v}(t)}(\dot\gamma_{x,v}(t),\dots,\dot\gamma_{x,v}(t)) \, \der t
  = -f_x(v,\dots,v).\qedhere
\end{equation*}
\end{proof}

Next we derive decay estimates for $u^f$ under the assumption that $I_mf = 0$.

\begin{lemma}\label{lma:uf-estimate}
Suppose that $I_m f = 0$.
\begin{enumerate}
  \item[(a)]
  If $f \in E_\eta(M)$ for $\eta > 0$, then
  \begin{equation*}
    \abs{u^f(x,v)} \leq C(1+d_g(x,o))\e^{-\eta d_g(x,o)}
  \end{equation*}
  for all $(x,v) \in SM$.
  \item[(b)]
  If $f \in P_\eta(M)$ for $\eta > 1$, then
  \begin{equation*}
    \abs{u^f(x,v)} \leq \frac{C}{{(1+d_g(x,o))}^{\eta-1}}
  \end{equation*}
  for all $(x,v) \in SM$.
\end{enumerate}
\end{lemma}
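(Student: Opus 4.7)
The plan is to exploit the hypothesis $If = 0$ to reduce, in both (a) and (b), to the case of a geodesic that escapes from $o$, where the sharp Jacobi field bound~\eqref{eq:espacing-geodesic-distance-stronger} gives a quantitative distance estimate that can be fed into the pointwise decay hypothesis on $f$.

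First I would note that since $If(x,v) = u^f(x,v) + (-1)^m u^f(x,-v)$ and $If \equiv 0$ by assumption, we obtain
\[
  |u^f(x,v)| = |u^f(x,-v)| \qquad \text{for all } (x,v) \in SM.
\]
Thanks to Lemma~\ref{lma:escaping-direction}, at least one of the geodesics $\gamma_{x,v}$, $\gamma_{x,-v}$ lies in $\esc{o}$, so after replacing $v$ by $-v$ if necessary it suffices to bound $|u^f(x,v)|$ under the extra assumption that $\gamma_{x,v} \in \esc{o}$. Write $r = d_g(x,o)$. Then~\eqref{eq:espacing-geodesic-distance-stronger} gives
\[
  d_g(\gamma_{x,v}(t),o) \geq \sqrt{r^2 + t^2} \geq \max(r,t), \qquad t \geq 0.
\]

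For part (a), the assumption $f \in E_\eta(M)$ yields
\[
  |u^f(x,v)| \leq C \int_0^\infty e^{-\eta\sqrt{r^2+t^2}}\, \der t.
\]
Splitting the integral at $t = r$ and using $\sqrt{r^2+t^2} \geq r$ on $[0,r]$ and $\sqrt{r^2+t^2} \geq t$ on $[r,\infty)$ gives
\[
  \int_0^r e^{-\eta\sqrt{r^2+t^2}}\,\der t \leq r e^{-\eta r}, \qquad \int_r^\infty e^{-\eta\sqrt{r^2+t^2}}\,\der t \leq \frac{1}{\eta}e^{-\eta r},
\]
so that $|u^f(x,v)| \leq C(1+r)e^{-\eta r}$, as desired. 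For part (b), the same splitting applied to the polynomial bound $|f(\gamma_{x,v}(t))| \leq C(1+\sqrt{r^2+t^2})^{-\eta}$ gives
\[
  \int_0^r (1+\sqrt{r^2+t^2})^{-\eta}\,\der t \leq r(1+r)^{-\eta}, \qquad \int_r^\infty (1+t)^{-\eta}\,\der t \leq \frac{1}{\eta-1}(1+r)^{-(\eta-1)},
\]
where the second estimate uses $\eta > 1$. Combining yields $|u^f(x,v)| \leq C(1+r)^{1-\eta}$, which is the bound claimed in (b).

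There is no real obstacle here; the one delicate point worth flagging is that the estimate~\eqref{eq:espacing-geodesic-distance-stronger} is only valid for escaping geodesics, which is exactly why the identity $u^f(x,v) = \pm u^f(x,-v)$ (forced by $If=0$) must be used at the outset rather than trying to estimate $u^f(x,v)$ directly for an arbitrary initial direction $v$.
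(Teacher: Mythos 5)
Your proposal is correct and follows the same route as the paper: reduce to an escaping geodesic via $If=0$ and Lemma~\ref{lma:escaping-direction}, then bound the integral using the decay of $f$ together with~\eqref{eq:espacing-geodesic-distance-stronger}. The paper defers the final integral estimate to an external reference, and your splitting of the integral at $t = r$ is exactly the computation being deferred (the same device appears explicitly later in the paper's proof of Lemma~\ref{lma:uf-gradient-estimates}), so nothing further is needed.
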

\begin{proof}
Since $I_mf = 0$, one has $\abs{u^f(x,v)} = \abs{u^f(x,-v)}$. By Lemma \ref{lma:escaping-direction}, possibly after replacing $(x,v)$ by $(x,-v)$, we may assume that $\gamma_{x,v}$ is escaping. We have
\begin{equation*}
  \abs{u^f(x,v)}
  = \aabs{\int_0^\infty f(\gamma_{x,v}(t))(\dot\gamma_{x,v}(t),\dots,\dot\gamma_{x,v}(t)) \, \der t}
  \leq \int_0^\infty \absg{f(\gamma_{x,v}(t))} \, \der t.
\end{equation*}
The rest of the proof is as in~\cite[Lemma 3.2]{Leh16}.
\end{proof}

\begin{lemma}\label{lma:sm-gradients-symmetry}
Let $f \in P_\eta(M)$ for some $\eta > 1$. If $I_m f = 0$ and $u^f$ is differentiable at $(x,v) \in SM$, then
\begin{equation*}
  \hnabla u^f(x,-v) = (-1)^{m-1} \hnabla u^f(x,v)
  \middletext{and}
  \vnabla u^f(x,-v) = (-1)^m \vnabla u^f(x,v).
\end{equation*}
\end{lemma}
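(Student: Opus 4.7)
The proof plan is to exploit the identity $u^f(x,v) + (-1)^m u^f(x,-v) = I_m f(x,v)$ which was recorded just before Lemma~\ref{lma:Xuf}. Under the hypothesis $I_m f = 0$ this gives the pointwise symmetry
\begin{equation*}
  u^f(x,-v) = (-1)^{m-1} u^f(x,v) \qquad \text{for all } (x,v) \in SM.
\end{equation*}
The strategy is then to differentiate this identity along the horizontal and vertical flows using Lemma~\ref{lma:sm-gradients-flows}, taking advantage of the fact that the condition $w \perp v$ is equivalent to $w \perp -v$, so that the characterizations of $\hnabla u^f$ and $\vnabla u^f$ at $(x,v)$ and at $(x,-v)$ use the \emph{same} admissible set of test vectors $w$.

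For the horizontal part, I would fix $w \in S_xM$ with $w \perp v$ and observe that $\hflow{w}{t}(x,-v) = (\gamma_{x,w}(t), -V(t))$, where $V(t)$ is the parallel transport of $v$ along $\gamma_{x,w}$. Here I use linearity of parallel transport. Applying the symmetry identity fiberwise and then Lemma~\ref{lma:sm-gradients-flows} gives
\begin{equation*}
  \br{\hnabla u^f(x,-v),w}
  = \frac{d}{dt} u^f(\gamma_{x,w}(t),-V(t))\valueat{t=0}
  = (-1)^{m-1}\frac{d}{dt} u^f(\hflow{w}{t}(x,v))\valueat{t=0}
  = (-1)^{m-1}\br{\hnabla u^f(x,v),w}.
\end{equation*}
Since this holds for every $w$ in the common orthogonal complement of $v$ (which is also the codomain in which $\hnabla u^f(x,\pm v)$ lives), the first identity follows.

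For the vertical part, fix $w \in S_xM$ with $w \perp v$ and write $\vflow{w}{t}(x,-v) = (x, -((\cos t)v - (\sin t)w)) = (x, -\vflow{-w}{t}(x,v)_2)$, where the subscript denotes the second component. Applying the fiber symmetry with this vector and then the characterization from Lemma~\ref{lma:sm-gradients-flows},
\begin{equation*}
  \br{\vnabla u^f(x,-v),w}
  = (-1)^{m-1} \frac{d}{dt} u^f(\vflow{-w}{t}(x,v))\valueat{t=0}
  = (-1)^{m-1}\br{\vnabla u^f(x,v),-w}
  = (-1)^m \br{\vnabla u^f(x,v),w},
\end{equation*}
which yields the second identity. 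The only real care required in the argument is the bookkeeping of signs: the flip $v \mapsto -v$ interacts differently with the horizontal flow (which reverses the parallel-transported fiber vector) than with the vertical flow (where the $-v$ sits inside the $\cos t$-term and forces the trick $w \mapsto -w$). Differentiability of $u^f$ at $(x,v)$ together with the pointwise identity automatically gives differentiability at $(x,-v)$, so invoking Lemma~\ref{lma:sm-gradients-flows} on both sides is legitimate.
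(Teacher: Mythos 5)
Your proof is correct and follows essentially the same route as the paper: differentiate the fiberwise antipodal identity $u^f(x,-v) = (-1)^{m-1}u^f(x,v)$ along the horizontal and vertical flows and invoke Lemma~\ref{lma:sm-gradients-flows}, with the condition $w \perp v \Leftrightarrow w \perp -v$ justifying the use of the same test vectors at both points. The only cosmetic difference is in the horizontal step, where the paper writes the antipode of $\hflow{w}{s}(x,-v)$ as $\hflow{-w}{-s}(x,v)$ and extracts the sign from the reparametrization $s \mapsto -s$, whereas you keep the parametrization $\hflow{w}{t}$ and absorb the sign into the parallel-transported fiber vector; the vertical computation is identical.
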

\begin{proof}
From $I_mf = 0$ it follows that
\begin{equation*}
  u^f(x,v) + (-1)^m u^f(x,-v) = 0.
\end{equation*}
Fix $w \in S_x M, \,w \perp v$. We note that
\begin{equation*}
  u^f (\hflow{w}{s}(x,-v)) + (-1)^m u^f(\hflow{-w}{-s}(x,v)) = 0
\end{equation*}
and hence
\begin{equation*}
  \frac{\der}{\der s} u^f(\hflow{w}{s}(x,-v))\valueat{s=0}
  = -(-1)^m \frac{\der}{\der s} (u^f(\hflow{-w}{-s}(x,v)))\valueat{s=0}
  = (-1)^m \frac{\der}{\der s} (u^f(\hflow{-w}{s}(x,v)))\valueat{s=0}.
\end{equation*}
By Lemma \ref{lma:sm-gradients-flows} 
\begin{equation*}
  \br{\hnabla u^f(x,-v),w} = (-1)^m\br{\hnabla u^f(x,v),-w}
  = -(-1)^m \br{\hnabla u^f(x,v),w}.
\end{equation*}

For $\vnabla u^f$ we use that
\begin{equation*}
  u^f (\vflow{w}{s}(x,-v)) + (-1)^m u^f(\vflow{-w}{s}(x,v)) = 0
\end{equation*}
and by Lemma \ref{lma:sm-gradients-flows} we get that
\begin{equation*}
  \br{\vnabla u^f(x,-v),w} = (-1)^{m-1} \br{\vnabla u^f(x,v),-w}
  = (-1)^m \br{\vnabla u^f(x,v), w}.\qedhere
\end{equation*}
\end{proof}

We move on to prove growth estimates for Jacobi fields.
These estimates will be used to derive estimates for
$\hnabla u^f$ and $\vnabla u^f$.

\begin{lemma}\label{lma:jacobi-estimates}
Suppose $J(t)$ is a normal Jacobi field along a geodesic
$\gamma$.
\begin{enumerate}
\item[(a)]
If all sectional curvatures along $\gamma([0,\tau])$ are $\geq -K_0$
for some constant $K_0 > 0$, and if $J(0) = 0$ or $D_t J(0) = 0$, then
\begin{equation*}
  \absg{J(t)}
  \leq \absg{J(0)} \,\cosh(\sqrt{K_0} t)
  + \absg{D_t J(0)} \,\frac{\sinh(\sqrt{K_0} t)}{\sqrt{K_0}}
\end{equation*}
for $t \in [0,\tau]$.
\item[(b)]
If $t_0 \in (0,\tau)$, then
\begin{equation*}
\absg{D_t J(t)} + \absg{\frac{J(t)}{t} - D_t J(t)}
\leq \left[ \absg{D_t J(t_0)}
  + \abs{\frac{J(t_0)}{t_0} - D_t J(t_0)}_g \right]
  \,\e^{2 \int_{t_0}^t s \K(\gamma(s)) \,\der s}
\end{equation*}
for $t \in [t_0,\tau]$, where $\mathcal{K}$ is as defined in Theorem~\ref{thm_main2}.
\end{enumerate}
\end{lemma}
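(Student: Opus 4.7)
For part (a), the plan is to invoke a Rauch-type comparison with the model space of constant sectional curvature $-K_0$. Let $\tilde M$ denote this model, with a unit-speed geodesic $\tilde\gamma$ and normal Jacobi field $\tilde J$ chosen to match the initial data of $J$. Since the sectional curvatures along $\gamma$ are bounded below by $-K_0$, the Rauch comparison theorem (see, e.g.,~\cite[Ch.~6]{Pet06} or~\cite[Ch.~4]{Jos08}) yields $\abs{J(t)} \leq \abs{\tilde J(t)}$ on $[0,\tau]$ provided one of $\tilde J(0)$, $D_t\tilde J(0)$ vanishes. On $\tilde M$, normal Jacobi fields are parallel translates of $\frac{\sinh(\sqrt{K_0}t)}{\sqrt{K_0}}D_t\tilde J(0)$ (when $\tilde J(0)=0$) or of $\cosh(\sqrt{K_0}t)\tilde J(0)$ (when $D_t\tilde J(0)=0$), so the asserted bound holds in each case with the other term vanishing.

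For part (b), the plan is to derive a single Gr\"onwall-type differential inequality for
\[
  F(t) \mdef \abs{D_t J(t)} + \abs{\frac{J(t)}{t} - D_t J(t)}.
\]
Using the Jacobi equation $D_t^2 J = -R(J,\dot\gamma)\dot\gamma$ together with the pointwise bound $\abs{R(J,\dot\gamma)\dot\gamma}_g \leq \K(\gamma)\abs{J}_g$ (valid since $J \perp \dot\gamma$ and $\abs{\dot\gamma}_g=1$), differentiation of $\abs{D_t J}^2$ gives $\frac{d}{dt}\abs{D_t J} \leq \K(\gamma)\abs{J}$. Setting $c(t) \mdef J(t)/t - D_t J(t)$, a direct computation yields $D_t c = -c/t + R(J,\dot\gamma)\dot\gamma$; differentiating $\abs{c}^2$ and using the favorable sign of $-\abs{c}/t$ gives $\frac{d}{dt}\abs{c} \leq \K(\gamma)\abs{J}$ as well. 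The algebraic identity $J = t D_t J + tc$ implies $\abs{J} \leq tF$, so combining the two inequalities yields $F' \leq 2t\K(\gamma)F$ wherever differentiable. Gr\"onwall's inequality on $[t_0,t]$ then produces the claimed exponential factor $\exp\left(2\int_{t_0}^t s\K(\gamma(s))\,\der s\right)$.

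The main technical obstacle is the non-smoothness of the absolute values $\abs{D_t J}$ and $\abs{c}$ at their zeros. This is handled in the usual way by working with the mollified quantities $\sqrt{\abs{D_t J}^2+\eps}$ and $\sqrt{\abs{c}^2+\eps}$, deriving the inequality for $\eps > 0$, and passing to the limit $\eps \to 0^+$; equivalently, the differential inequality for $F$ may be interpreted in the sense of distributions, which is enough for the Gr\"onwall step. A minor side check is the curvature bound $\abs{R(J,\dot\gamma)\dot\gamma}_g \leq \K(\gamma)\abs{J}_g$, which follows directly from the definition of $\K$ and the fact that $R(J,\dot\gamma)\dot\gamma$ lies in the two-plane orthogonal decomposition associated to $\mathrm{span}\{J,\dot\gamma\}$.
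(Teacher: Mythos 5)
Your argument is correct and essentially the same as the paper's: part (a) is the same appeal to the Rauch comparison theorem, and in part (b) your quantity $F(t) = \abs{D_t J(t)} + \abs{J(t)/t - D_t J(t)}$, the curvature-operator bound by $\K(\gamma(s))$, and the resulting factor $\e^{2\int_{t_0}^t s\K(\gamma(s))\,\der s}$ coincide exactly with the paper's $g(t) = \abs{A(t)} + \abs{B(t)/t}$ (with $A=\dot u$, $B = u - t\dot u$ in a parallel frame) and its Gr\"onwall bound. The only material difference is that the paper integrates the equations for $A$ and $B$ first and applies the integral form of Gr\"onwall, which avoids the mollification you need in order to differentiate the absolute values.
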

\begin{proof}
(a) follows from the Rauch comparison theorem \cite[Theorem 4.5.2]{Jos08}. For (b), we follow the argument in~\cite{Leh16}. Consider an orthonormal frame $\{ \dot{\gamma}(t), E_1(t), \ldots, E_{n-1}(t) \}$ obtained by parallel transporting an orthonormal basis of $T_{\gamma(0)} M$ along $\gamma$. Write $J(t) = u^j(t) E_j(t)$, so that the Jacobi equation becomes
\begin{equation} \label{eq:jacobi-system}
  \ddot{u}(t) + R(t) u(t) = 0
\end{equation}
where $u(t) = (u^1(t), \ldots, u^{n-1}(t))$
and $R_{jk} = R(E_j, \dot{\gamma}, \dot{\gamma}, E_k)$.
We wish to estimate $v(t) = \frac{u(t)}{t}$, and we do this by writing
$v(t) = A(t) + \frac{B(t)}{t}$ where
\begin{equation*}
  A(t) = \dot{u}(t), \qquad B(t) = u(t) - t \dot{u}(t).
\end{equation*}
By using the equation~\eqref{eq:jacobi-system}, we see that
\begin{align*}
  A(t) - A(t_0) &= - \int_{t_0}^t s R(s) v(s) \,\der s, \\
  B(t) - B(t_0) &= \int_{t_0}^t s^2 R(s) v(s) \,\der s.
\end{align*}
Write $g(t) = \abs{A(t)} + \abs{\frac{B(t)}{t}}$. If $t \geq t_0$ one has
\begin{equation*}
  g(t) = \abs{A(t_0) - \int_{t_0}^t s R(s) v(s) \,\der s}
  + \frac{1}{t} \abs{B(t_0)
  + \int_{t_0}^t s^2 R(s) v(s) \,\der s}
  \leq g(t_0) + 2 \int_{t_0}^t s \norm{R(s)} g(s) \,\der s.
\end{equation*}
The Gronwall inequality implies that
\begin{equation*}
  g(t) \leq g(t_0) \e^{2 \int_{t_0}^t s \norm{R(s)} \,\der s}.
\end{equation*}
The result follows from this, since $\norm{R(s)} = \sup_{\abs{\xi}=1} R(s) \xi \cdot \xi = \sup_{\dot{\gamma}(s) \in \Pi} K(\Pi) \leq \mathcal{K}(\gamma(s))$.
\end{proof}

\begin{corollary}\label{cor:jacobi-estimates}
Suppose that $(M,g)$ is a Cartan-Hadamard manifold. Let $\gamma$ be a geodesic
and $J$ a normal Jacobi field along
it, satisfying either $J(0) = 0$ and $\abs{D_t J(0)} \leq 1$ or $\abs{J(0)} \leq 1$ and $D_t J(0) = 0$.

\begin{enumerate}
\item[(a)]
If $-K_0 \leq K \leq 0$ and $K_0 > 0$, then
\begin{equation*}
  \absg{J(t)} \leq C \e^{\sqrt{K_0} t}
  \middletext{and}
  \absg{D_t J(t)} \leq C \e^{\sqrt{K_0} t}
\end{equation*}
for $t \geq 0$ where the constants
do not depend on the geodesic $\gamma$.

\item[(b)]
If
$\K \in P_\kappa(M)$ for some $\kappa > 2$,
then
\begin{equation*}
  \absg{J(t)} \leq C(t+1)
  \middletext{and}
  \absg{D_t J(t)} \leq C
\end{equation*}
for $t \geq 0$. If in addition $\gamma \in \esc{o}$, then
the constants do not depend on the geodesic $\gamma$.
\end{enumerate}
\end{corollary}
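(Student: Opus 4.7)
The plan is to deduce both parts from Lemma~\ref{lma:jacobi-estimates}. For part (a), the bound on $|J(t)|$ is immediate from Lemma~\ref{lma:jacobi-estimates}(a): under either initial condition the right-hand side reduces to $\cosh(\sqrt{K_0}\,t)$ or $\sinh(\sqrt{K_0}\,t)/\sqrt{K_0}$, both dominated by $C\e^{\sqrt{K_0}\,t}$. For the derivative bound I would integrate the Jacobi equation $D_t^2J = -R(J,\dot\gamma)\dot\gamma$ from $0$ to $t$, using $|R(J,\dot\gamma)\dot\gamma|\leq K_0|J|$ (valid since $J\perp\dot\gamma$ and $|K|\leq K_0$) together with the bound just established; this yields $|D_tJ(t)| \leq |D_tJ(0)| + K_0\int_0^t |J(s)|\,\der s \leq C'\e^{\sqrt{K_0}\,t}$.

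For part (b) the plan is to apply Lemma~\ref{lma:jacobi-estimates}(b) with a fixed base point $t_0 = 1$ and to control the exponent $2\int_{t_0}^t s\mathcal{K}(\gamma(s))\,\der s$. Since $\mathcal{K}\in P_\kappa(M)$ is continuous and decays at infinity, it is bounded on $M$; setting $K_0 := \sup_M \mathcal{K}$ and applying part (a) on $[0,1]$ yields uniform bounds $|J(t)|, |D_tJ(t)|\leq C$ there, and in particular $g(1) \leq C$ where $g(t) := |D_tJ(t)| + |J(t)/t - D_tJ(t)|$. For $t\geq 1$, Lemma~\ref{lma:jacobi-estimates}(b) gives $g(t) \leq g(1)\exp\bigl(2\int_1^t s\mathcal{K}(\gamma(s))\,\der s\bigr)$; once $g$ is bounded we immediately obtain $|D_tJ(t)|\leq g(t)\leq C$ and, by the triangle inequality, $|J(t)|/t \leq g(t)\leq C$, so $|J(t)|\leq Ct$ on $[1,\infty)$. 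Combining with the estimate on $[0,1]$ yields $|J(t)|\leq C(t+1)$.

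The remaining task is to bound the integral $\int_1^\infty s\mathcal{K}(\gamma(s))\,\der s$. For $\gamma \in \esc{o}$ the escaping estimate \eqref{eq:espacing-geodesic-distance-stronger} gives $d_g(\gamma(s),o)\geq s$, hence $\mathcal{K}(\gamma(s))\leq C(1+s)^{-\kappa}$ and $\int_1^\infty s(1+s)^{-\kappa}\,\der s < \infty$ uniformly in $\gamma$; this is exactly where the hypothesis $\kappa > 2$ enters. For a general geodesic the weaker triangle inequality \eqref{eq:geodesic-distance} still yields $\mathcal{K}(\gamma(s))\leq C(1 + (s-d_g(x,o))_+)^{-\kappa}$, so the integral is finite but the constant depends on $d_g(x,o)$, consistent with the statement of the corollary.

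The point I expect to require the most care is that one cannot simply let $t_0 \to 0$ in Lemma~\ref{lma:jacobi-estimates}(b): the quantity $|J(t_0)/t_0 - D_tJ(t_0)|$ blows up in the case $|J(0)|\leq 1$, $D_tJ(0) = 0$, since then $J(t_0)/t_0 \to \infty$. Splitting off the interval $[0,1]$ and handling it via part (a) is precisely what circumvents this issue, and the resulting constants depend only on $\|\mathcal{K}\|_{L^\infty(M)}$, hence only on $M$.
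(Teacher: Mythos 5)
Your proposal is correct and follows essentially the same route as the paper: part (a) via Lemma~\ref{lma:jacobi-estimates}(a) plus integrating the Jacobi equation with $\norm{R}\leq K_0$, and part (b) by fixing $t_0=1$, using part (a) with $K_0=\sup_M\K$ on $[0,1]$, and controlling the Gronwall factor $\exp\bigl(2\int_1^t s\K(\gamma(s))\,\der s\bigr)$ uniformly over $\esc{o}$ via \eqref{eq:espacing-geodesic-distance-stronger} and $\kappa>2$. Your closing remark about why one cannot send $t_0\to 0$ is exactly the reason the paper splits at $t_0=1$.
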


\begin{proof}
(a) The estimate for $\absg{J(t)}$ follows directly
from Lemma~\ref{lma:jacobi-estimates}.
Using the same notations as in the proof of that Lemma we have
$\absg{D_t J(t)} = |\dot{u}(t)|$ and
by integrating~\eqref{eq:jacobi-system} from
$0$ to $t$ we get
\begin{equation*}
\begin{split}
  |\dot{u}(t)|
  &\leq |\dot{u}(0)| + \int_0^t \norm{R(s)}|u(s)| \, \der s \\
  &\leq |D_t J(0)| + \int_0^t K_0 |J(s)| \, \der s\\
  &\leq C\e^{\sqrt{K_0} t}.
\end{split}
\end{equation*}

(b) 
For a fixed geodesic, the estimates follow
from Lemma~\ref{lma:jacobi-estimates}.
If $\K \in P_\kappa(M)$
for $\kappa > 2$, then
\begin{equation*}
  A \mdef \sup_{\gamma \in \esc{o}} \int_0^\infty s \K(\gamma(s)) \, \der s \leq C \sup_{\gamma \in \esc{o}} \int_0^\infty s (1+d_g(\gamma(s),o))^{-\kappa} \, \der s
  < \infty
\end{equation*}
by using \eqref{eq:espacing-geodesic-distance-stronger}. Let us fix $t_0 = 1$
and suppose that $J$ is a Jacobi field along a geodesic
in $\esc{o}$ whose initial values satisfy the given assumptions.
From Lemma~\ref{lma:jacobi-estimates} and (a) we then get
that
\begin{equation*}
\begin{split}
  \absg{J(t)}
  &\leq \e^{2A}
  \left( 2\absg{D_t J(1)}
    + \absg{J(1)} \right) t\\
  &\leq \e^{2A}C\e^{\sqrt{K_0}} t
\end{split}
\end{equation*}
for $t \geq 1$, where $K_0 = \sup_{x\in M} \K(x)$.

For $t \in [0,1]$ we can estimate $\absg{J(t)} \leq C\e^{\sqrt{K_0}}$.
By combining these two estimates we get
\begin{equation*}
  \absg{J(t)} \leq C(1+\e^{2A}t) \leq C\e^{2A}(1+t)
\end{equation*}
for $t \geq 0$, and the constants do not
depend on $\gamma \in \esc{o}$.

For $\absg{D_t J(t)}$, Lemma~\ref{lma:jacobi-estimates} gives
the estimate
\begin{equation*}
  \absg{D_t J(t)} \leq \e^{2A}
  \left( 2\absg{D_t J(1)}
    + \absg{J(1)} \right)
\end{equation*}
for $t \geq 1$, and for $t \in [0,1]$ we get a bound
from (a). Neither of these bounds depends on $\gamma \in \esc{o}$.
\end{proof}

\begin{lemma}\label{lma:uf-gradient-estimates}
Suppose that $I_m f = 0$.
\begin{enumerate}
\item[(a)]
If $-K_0 \leq K \leq 0$, $K_0 >0$ and $f \in E_\eta^1(M)$ for some $\eta > \sqrt{K_0}$,
then $u^f$ is differentiable along every geodesic on $SM$, $u^f \in W^{1,\infty}(SM)$ and
\begin{equation*}
  \abs{\hnabla u^f(x,v)}_g
  \leq C\e^{-(\eta-\sqrt{K_0})d_g(x,o)}
\end{equation*}
for a.e. $(x,v) \in SM$.
\item[(b)]
If $\K \in P_\kappa(M)$ for some $\kappa > 2$
and $f \in P_\eta^1(M)$ for some $\eta > 1$, then $u^f$ is differentiable along every geodesic on $SM$, $u^f \in W^{1,\infty}(SM)$ and
\begin{equation*}
  \abs{\hnabla u^f(x,v)}_g
  \leq \frac{C}{{(1 + d_g(x,o))}^{\eta-1}}
\end{equation*}
for a.e. $(x,v) \in SM$.
\end{enumerate}

The same estimates hold for $\vnabla u^f$
with the same assumptions.
\end{lemma}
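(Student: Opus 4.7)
The plan is to express the horizontal and vertical derivatives of $u^f$ as integrals along $\gamma_{x,v}$ weighted by normal Jacobi fields, and then bound these integrals by combining the Jacobi growth estimates of Corollary \ref{cor:jacobi-estimates} with the decay estimates for $\hnabla f$ and $\vnabla f$ from Lemma \ref{lma:tensor-sm-gradients}. Writing $F \in C^1(SM)$ for the function associated to $f$ via $\lambda$, one has $u^f(x,v) = \int_0^\infty F(\phi_t(x,v))\,\der t$.

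First I would reduce to the case $\gamma_{x,v} \in \esc{o}$. Lemma \ref{lma:escaping-direction} guarantees that at least one of $\gamma_{x,\pm v}$ is escaping, and the exceptional set where both fail is a null set. Together with $If = 0$, Lemma \ref{lma:sm-gradients-symmetry} gives $|\hnabla u^f(x,-v)| = |\hnabla u^f(x,v)|$ and analogously for $\vnabla u^f$, so it suffices to bound these at points where $\gamma_{x,v} \in \esc{o}$. Fix a unit $w \in S_xM$ with $w \perp v$. By Lemma \ref{lma:sm-gradients-flows} and differentiation under the integral sign,
\[
\langle \hnabla u^f(x,v), w\rangle = \int_0^\infty \frac{\der}{\der s} F(\phi_t(\phi_{w,s}^h(x,v)))\valueat{s=0}\,\der t.
\]
Corollary \ref{cor:geodFlowDiff} applied to the horizontal lift of $w$ identifies the $s$-derivative inside with $(J^h_w(t))^h + (D_tJ^h_w(t))^v$, where $J^h_w$ is the normal Jacobi field along $\gamma_{x,v}$ with $J^h_w(0)=w$ and $D_tJ^h_w(0)=0$. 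Since the Sasaki metric orthogonally splits $\nabla_{SM}F = (XF)X + \hnabla F + \vnabla F$,
\[
\langle \hnabla u^f(x,v), w\rangle = \int_0^\infty \bigl(\langle \hnabla F(\phi_t(x,v)), J^h_w(t)\rangle + \langle \vnabla F(\phi_t(x,v)), D_tJ^h_w(t)\rangle \bigr) \,\der t,
\]
and the analogous formula for $\vnabla u^f$ uses $\phi_{w,s}^v$ and the Jacobi field $J^v_w$ with $J^v_w(0)=0$, $D_tJ^v_w(0)=w$.

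The two cases then reduce to direct integration. In case (a), Lemma \ref{lma:tensor-sm-gradients}(a) gives $|\hnabla F|,|\vnabla F| \leq Ce^{-\eta d_g(\cdot,o)}$, Corollary \ref{cor:jacobi-estimates}(a) gives $|J^h_w(t)|,|D_tJ^h_w(t)| \leq Ce^{\sqrt{K_0}t}$, and \eqref{eq:espacing-geodesic-distance-stronger} gives $d_g(\gamma_{x,v}(t),o) \geq \sqrt{r^2+t^2}$ with $r = d_g(x,o)$. Splitting the integral at $t = r$ and using $\sqrt{r^2+t^2}\geq r$ on $[0,r]$ and $\sqrt{r^2+t^2}\geq t$ on $[r,\infty)$ produces the bound $Ce^{-(\eta-\sqrt{K_0})r}$, convergence being guaranteed by $\eta > \sqrt{K_0}$. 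Case (b) is handled identically using $|J^h_w(t)| \leq C(1+t)$, $|D_tJ^h_w(t)| \leq C$ from Corollary \ref{cor:jacobi-estimates}(b) together with the polynomial decay from Lemma \ref{lma:tensor-sm-gradients}(b); the same splitting yields $C(1+r)^{-(\eta-1)}$.

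The hard part will be justifying the interchange of $\frac{\der}{\der s}$ and $\int_0^\infty$ and upgrading the pointwise bounds to $u^f \in W^{1,\infty}(SM)$. For the interchange, the above estimates already supply a uniform (in small $s$) integrable majorant once one notes that $\phi_{w,s}^h(x,v)$ remains escaping for small $s$ with $d_g(\pi(\phi_{w,s}^h(x,v)),o)$ comparable to $r$. For the global regularity, the uniform pointwise bounds on $\hnabla u^f$ and $\vnabla u^f$ combined with $Xu^f = -f \in L^\infty$ from Lemma \ref{lma:Xuf} show that $u^f$ is locally Lipschitz on $SM$, hence in $W^{1,\infty}_{\mathrm{loc}}(SM)$ by Remark \ref{rmk:diff_ae}; the completeness of $SM$ from Lemma \ref{lem:SMcomplete} together with the global uniformity of the bounds then yields $u^f \in W^{1,\infty}(SM)$, and the Lipschitz property gives differentiability along every geodesic of $SM$.
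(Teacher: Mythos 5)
Your derivation of the pointwise bounds follows the paper's second half essentially verbatim: reduce to $\gamma_{x,v}\in\esc{o}$ via Lemmas \ref{lma:escaping-direction} and \ref{lma:sm-gradients-symmetry}, write $\langle\hnabla u^f(x,v),w\rangle=\int_0^\infty\bigl(\langle\hnabla f,J^h_w(t)\rangle+\langle\vnabla f,D_tJ^h_w(t)\rangle\bigr)\,\der t$, and integrate the Jacobi growth estimates against the decay of $\hnabla f,\vnabla f$ after splitting at $t=d_g(x,o)$. That part is correct and matches the paper.

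The gap is in the regularity step, which you rightly flag as the hard part but then dispatch with two claims that do not hold as stated. First, ``$\phi^h_{w,s}(x,v)$ remains escaping for small $s$'' is false in general: by the proof of Lemma \ref{lma:escaping-direction}, membership in $\esc{o}$ is the closed condition $\frac{\der}{\der t}d_g(\gamma_{x,v}(t),o)^2\big|_{t=0}\geq 0$, and a geodesic realizing equality has non-escaping neighbours. Moreover, local Lipschitz continuity on $SM$ requires controlling difference quotients along \emph{all} geodesics of $SM$ (including oblique ones) through a neighbourhood of an arbitrary point, not only the horizontal flows issued from points with $\gamma_{x,v}\in\esc{o}$. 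The paper avoids escaping entirely here: the bare triangle inequality \eqref{eq:geodesic-distance} gives $d_g(\gamma_{\Gamma(s)}(t),o)\geq t-C$ uniformly over the relevant family of nearby geodesics, which suffices both for the integrable majorant and for a uniform bound on $\int_0^\infty t\,\K(\gamma_{\Gamma(s)}(t))\,\der t$, hence for $s$-uniform Jacobi constants. Second, the deduction ``pointwise bounds on $\hnabla u^f$, $\vnabla u^f$ and $Xu^f=-f$ imply $u^f$ is locally Lipschitz'' is circular: those gradients are only defined a.e.\ once $u^f\in W^{1,\infty}_{\mathrm{loc}}(SM)$ is known (Rademacher), which is exactly what is being proved. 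The correct order, as in the paper, is to bound the difference quotients $r^{-1}\bigl(u^f(\Gamma(r))-u^f(\Gamma(0))\bigr)$ directly by an $r$-uniform integral of the derivatives of $f$ against Jacobi fields (via Corollary \ref{cor:geodFlowDiff}), conclude local Lipschitz continuity first, and only then invoke a.e.\ differentiability to run the gradient computation you describe.
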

\begin{proof}[Proof of $u^f \in W^{1,\infty}_\text{loc}(SM)$]
We show that $u^f$ is locally Lipschitz continuous. Fix $(x_0, v_0) \in SM$,
and suppose that $\Gamma(s)$ is a unit speed geodesic on $SM$ through $(x_0,v_0)$. We have for any $r > 0$
\begin{align}
\frac{u^f(\Gamma(r)) - u^f(\Gamma(0))}{r} &= \int_0^{\infty} \frac{f(\phi_t(\Gamma(r))) - f(\phi_t(\Gamma(0)))}{r} \,\der t \notag \\
 & = \int_0^{\infty} \frac{1}{r} \int_0^r \frac{\partial}{\partial s} \left[ f(\phi_t(\Gamma(s))) \right] \,\der s \,\der t \label{u_difference_quotient}\\
 &= \int_0^{\infty} \frac{1}{r} \int_0^r \brgs{\nabla_{SM} f(\phi_t(\Gamma(s))), D\phi_t(\Gamma(s)) \dot{\Gamma}(s)} \,\der s \,\der t. \notag
\end{align}

We write
\[
  \dot\Gamma(s) = \brgs{\dot{\Gamma}(s), X(\Gamma(s))}X(\Gamma(s)) + H_{\dot\Gamma}(s) + V_{\dot\Gamma}(s)
\]
where $H_{\dot\Gamma}(s) \in \hbundle(\Gamma(s))$ and $V_{\dot\Gamma}(s) \in \vbundle(\Gamma(s))$
When we apply Corollary \ref{cor:geodFlowDiff} to the right hand side of \eqref{u_difference_quotient} (and omit the identifications), we find that
\begin{equation}\label{eq:integral-on-rhs}
\begin{split}
 &\frac{u^f(\Gamma(r)) - u^f(\Gamma(0))}{r} 
 = \int_0^{\infty} \frac{1}{r} \int_0^r \Bigg[ Xf(\phi_t(\Gamma(s))) \brgs{\dot{\Gamma}(s), X(\Gamma(s))} \\
  &\quad+ \brg{\hnabla f(\phi_t(\Gamma(s))), \absgs{H_{\dot\Gamma}(s)} J_{w_\lh(s)}^\lh(t) + \absgs{V_{\dot\Gamma}(s)} J_{w_\lv(s)}^\lv(t)} \\
  &\quad+ \brg{\vnabla f(\phi_t(\Gamma  (s))), \absgs{H_{\dot\Gamma}(s)} D_t J_{w_\lh(s)}^\lh(t) + \absgs{V_{\dot\Gamma}(s)} D_t J_{w_\lv(s)}^\lv(t)} \Bigg] \,\der s \,\der t
\end{split}
\end{equation}
where $w_\lh(s) = H_{\dot\Gamma}(s)/\absgs{H_{\dot\Gamma}(s)}$ and $w_\lv(s) = V_{\dot\Gamma}(s)/\absgs{V_{\dot\Gamma}(s)}$.
Here the Jacobi fields are along the geodesic $\gamma_{\Gamma(s)}(t) := \pi(\phi_t(\Gamma(s)))$.
By definition their initial values fulfill the assumptions of Corollary~\ref{cor:jacobi-estimates}.

From this point on we will work under assumptions of (b).
The proof under assumptions of (a) is similar but simpler.
We fix a small $\eps > 0$. We show that the integral \eqref{eq:integral-on-rhs} has a uniform upper bound for every $r \in (0,1]$ and every geodesic $\Gamma$ through a point in $\ball{(x_0,v_0)}{\eps} \subset SM$. For $(x,v) \in SM$
we denote by $\mathcal{G}(x,v)$ the set of unit speed geodesics on $SM$ through $(x,v)$, and define \[J(x_0,v_0,\eps) \mdef \set{\Gamma \in \mathcal{G}(x,v)}{(x,v) \in \ball{(x_0,v_0)}{\eps}}.\]

For all $\Gamma \in J(x_0,v_0,\eps), \Gamma(0) = (x,v)$, and $s \in (0,r]$ the estimate~\eqref{eq:sasaki-comparison} gives that
$d_g(x,x_0) \leq \eps$ and
\[
  d_g(\gamma_{\Gamma(s)}(0),x) = d_g(\pi(\Gamma(s)),x)
  \leq d_{\sasaki{g}}(\Gamma(s),(x,v)) \leq s.
\]
The estimate~\eqref{eq:geodesic-distance} implies that
\begin{equation}\label{eq:dist-gamma-ineq}
\begin{split}
  d_g (\pi(\phi_t(\Gamma(s))),o)
  &= d_g(\gamma_{\Gamma(s)}(t),o) \geq t - d_g(\gamma_{\Gamma(s)}(0),x_0) \\
  &\geq t - \sup_{s\in(0,r]}d_g(\gamma_{\Gamma(s)}(0),o)
  \geq t - d_g(x,o) - r \\
  &\geq t - d_g(x_0,o) - \eps -r
\end{split}
\end{equation}
for all $t \geq t_0$ where $t_0 \mdef d_g(x_0,o) + r + \eps$.
We can use a trivial estimate $d_g (\pi(\phi_t(\Gamma(s))),o) \geq 0$ on the interval $[0,t_0]$. 
Further, the estimate~\eqref{eq:dist-gamma-ineq} gives
\begin{equation}\label{eq:K-decays-nicely}
 \K(\gamma_{\Gamma(s)}(t))
 \leq \frac{C}{(1+ d_g(\gamma_{\Gamma(s)}(t),o))^\eta}
 \leq \frac{C}{(1+ t - d_g(x_0,o)-\eps-r)^\eta}
\end{equation}
for all $t \geq t_0$ where the constant $C$ does not depend on $s \in (0,r]$ or 
the geodesic $\Gamma \in J(x_0,v_0,\eps)$, and hence
\begin{equation}\label{eq:Gamma-sup-K}
  \sup_{\substack{\Gamma \in J(x_0,v_0,\eps),\\s \in (0,r]}}
  \int_0^\infty t\K(\gamma_{\Gamma(s)}(t)) \, \der t < \infty.
\end{equation}

Using the proof of Corollary~\ref{cor:jacobi-estimates} together with~\eqref{eq:Gamma-sup-K},
we can find a constant $C$ which does not depend on $s \in (0,r]$ so that one has 
\begin{equation*}
  \absg{J_{w_\lh(s)}^\lh (t)} \leq Ct,\quad
  \absg{D_t J_{w_\lh(s)}^\lh (t)} \leq C
\end{equation*}
for all $t \geq 0$ and $\Gamma \in J(x_0,v_0,\eps)$.
Similar estimates hold also uniformly for $J_{w_\lv(s)}^\lv (t)$ and $D_t J_{w_\lv(s)}^\lv (t)$.

Recall that $\absgs{H_{\dot\Gamma}(s)}, \absgs{V_{\dot\Gamma}(s)} \leq \absgs{\dot{\Gamma}(s)} = 1$, and that $w_\lh(s), w_\lv(s)$ depend on $\Gamma$. By combining the above estimates for Jacobi fields with
estimate~\eqref{eq:dist-gamma-ineq}
and Lemma~\ref{lma:tensor-sm-gradients} we get
for the integrand in~\eqref{eq:integral-on-rhs} that
\begin{equation}
  \begin{split}
  & \big|Xf(\phi_t(\Gamma(s))) \brgs{\dot{\Gamma}(s), X(\Gamma(s))} \\
  &\quad+ \brg{\hnabla f(\phi_t(\Gamma(s))), \absgs{H_{\dot\Gamma}(s)} J_{w_\lh(s)}^\lh(t) + \absgs{V_{\dot\Gamma}(s)} J_{w_\lv(s)}^\lv(t)} \\
  &\quad+ \brg{\vnabla f(\phi_t(\Gamma  (s))), \absgs{H_{\dot\Gamma}(s)} D_t J_{w_\lh(s)}^\lh(t) + \absgs{V_{\dot\Gamma}(s)} D_t J_{w_\lv(s)}^\lv(t)}\big| \\
  &\leq \absg{Xf(\gamma_{\Gamma(s)}(t))}
  + \absg{\hnabla f(\gamma_{\Gamma(s)}(t))} \absg{\absgs{H_{\dot\Gamma}(s)}J_{w_\lh(s)}^\lh(t) + \absgs{V_{\dot\Gamma}(s)}J_{w_\lv(s)}^\lv(t)}\\
	&\quad+ \absg{\vnabla f(\gamma_{\Gamma(s)}(t))} \absg{\absgs{H_{\dot\Gamma}(s)}D_t J_{w_\lh(s)}^\lh(t) + \absgs{V_{\dot\Gamma}(s)}D_t J_{w_\lv(s)}^\lv(t)}\\
	&\leq \absg{Xf(\gamma_{\Gamma(s)}(t))}
  + \absg{\hnabla f(\gamma_{\Gamma(s)}(t))} \left(\absg{J_{w_\lh(s)}^\lh(t)} + \absg{J_{w_\lv(s)}^\lv(t)}\right) \\
  &\quad+ \absg{\vnabla f(\gamma_{\Gamma(s)}(t))} \left(\absg{D_t J_{w_\lh(s)}^\lh(t)} + \absg{D_t J_{w_\lv(s)}^\lv(t)}\right)\\
  &\leq \frac{Ct}{(1+t-d_g(x_0,o)- \eps - r)^{\eta + 1}} + \frac{C}{(1+t-d_g(x_0,o)- \eps - r)^{\eta}}
  \end{split}
\end{equation}
for all $t \in [t_0,\infty)$, $s \in (0,r]$ and $\Gamma \in J(x_0,v_0,\eps)$.
On the interval $[0,t_0]$ we also get a uniform upper bound since $f$, its covariant derivative and sectional curvatures
are all bounded.

We can conclude that integral on the right hand side
of \eqref{eq:integral-on-rhs} converges absolutely with some uniform bound $C < \infty$ over $r \in (0,1]$ and the set $J(x_0,v_0,\eps)$.
This shows that $u^f$ is locally Lipschitz, i.e.~$u^f \in W_\text{loc}^{1,\infty}(SM)$ (cf. Remark \ref{rmk:diff_ae}).
Moreover, the uniform estimate together with the dominated convergence theorem
guarantees that the limit $r\to 0$ of~\eqref{u_difference_quotient}
exists for all geodesics $\Gamma$ on $SM$. This finishes the first part of the proof.
\end{proof}
\begin{proof}[Proof of the gradient estimates] By Rademacher's theorem $u^f$ is differentiable almost everywhere, and thus we can assume that $u^f$ is differentiable at $(x,v) \in SM$.
By Lemmas~\ref{lma:escaping-direction}
and~\ref{lma:sm-gradients-symmetry} we can assume
that $(x,v)$ satisfies $\gamma = \gamma_{x,v} \in \esc{o}$.
We may also assume that $\hnabla u^f(x,v) \neq 0$. Since $\brg{\hnabla u^f(x,v),v} = 0$,
we can take $w = \hnabla u^f(x,v)/\absg{\hnabla u^f(x,v)}$ in
Lemma~\ref{lma:sm-gradients-flows} and get that
\begin{equation}
\begin{split}
  \absg{\hnabla u^f(x,v)}
  &= \frac{\der}{\der s} u^f(\hflow{w}{s}(x,v))\valueat{s=0}\\
  &= \int_0^\infty
  \brg{\hnabla f(\phi_t(x,v)), J^\lh(t)} + \brg{\vnabla f(\phi_t(x,v)), D_t J^\lh(t)}
  \, \der t
\end{split}
\end{equation}
where $J^\lh$ is again a Jacobi field along $\gamma$ fulfilling the 
assumptions of Corollary~\ref{cor:jacobi-estimates}. Under the conditions in part (a), the estimate~\eqref{eq:espacing-geodesic-distance-stronger} implies 
\[
\absg{\hnabla u^f(x,v)} \leq C \int_0^\infty
 \e^{-\eta d_g(\gamma(t),o)} \e^{\sqrt{K_0} \,t}
  \, \der t \leq \int_0^\infty
 \e^{-\eta \sqrt{d_g(x,o)^2 + t^2}} \e^{\sqrt{K_0} \,t}
  \, \der t.
\]
Writing $r = d_g(x,o)$ and splitting the integral over $[0,r)$ and $[r, \infty)$ gives 
\[
\absg{\hnabla u^f(x,v)} \leq C \left[ \int_0^r \e^{-\eta r} \e^{\sqrt{K_0} \,t} \,\der t+ \int_r^{\infty} \e^{-\eta t} \e^{\sqrt{K_0}\,t} \,\der t \right] \leq C \e^{-(\eta-\sqrt{K_0}) d_g(x,o)}.
\]
The above estimate also shows that $\absg{\hnabla u^f}$ is bounded.
Similarly, under the conditions in part (b), Lemma \ref{lma:tensor-sm-gradients}, Corollary \ref{cor:jacobi-estimates} and~\eqref{eq:espacing-geodesic-distance-stronger} imply 
\begin{align*}
\absg{\hnabla u^f(x,v)} &\leq C \int_0^{\infty} \frac{1+t}{(1+d_g(\gamma(t),o))^{\eta+1}} \,\der t + C \int_0^{\infty} \frac{C}{(1+d_g(\gamma(t),o))^{\eta}} \,\der t \\
 &\leq C \left[ \int_0^r \frac{1+t}{(1+r)^{\eta+1}} \,\der t + \int_r^{\infty} \frac{1+t}{(1+t)^{\eta+1}} \,\der t \right] \leq C(1+r)^{-(\eta-1)}
\end{align*}
where $r = d_g(x,o)$. The same arguments apply to $\vnabla u^f$. Hence $u^f \in W^{1,\infty}(SM)$ in the both cases, (a) and (b).
\end{proof}

\begin{lemma}\label{lma:sphere-volume}
\begin{enumerate}
  \item[(a)]
  If $-K_0 \leq K \leq 0$ and $K_0 > 0$, then
  \begin{equation*}
    \Vol \sphere{o}{r} \leq C  \e^{(n-1)\sqrt{K_0} r}
  \end{equation*}
  for all $r \geq 0$.

  \item[(b)]
  If $\K \in P_\kappa(M)$ for $\kappa >2$, then
  \begin{equation*}
    \Vol \sphere{o}{r} \leq Cr^{n-1}
  \end{equation*}
  for all $r \geq 0$.
\end{enumerate}
\end{lemma}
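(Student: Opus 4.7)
The plan is to express $\Vol \sphere{o}{r}$ as an integral over $S_o M$ of a determinant of Jacobi fields along radial geodesics from $o$, and then invoke Corollary~\ref{cor:jacobi-estimates} uniformly in direction.

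Since $(M,g)$ is Cartan-Hadamard, $\exp_o \colon T_o M \to M$ is a diffeomorphism, so the map $\psi_r \colon S_o M \to \sphere{o}{r}$ defined by $\psi_r(v) = \exp_o(rv)$ is a diffeomorphism for each $r > 0$. Fix $v \in S_o M$ and an orthonormal basis $\{e_1,\dots,e_{n-1}\}$ of the orthogonal complement of $v$ in $T_o M$, and let $J_i$ be the normal Jacobi field along $\gamma_{o,v}$ with $J_i(0) = 0$ and $D_t J_i(0) = e_i$. The standard identity $(d\exp_o)_{rv}(r e_i) = J_i(r)$ combined with the Gauss lemma yields $(d\psi_r)_v(e_i) = J_i(r) \in \dot\gamma_{o,v}(r)^\perp$, hence
\[
\Vol \sphere{o}{r} = \int_{S_o M} |\det(J_1(r),\dots,J_{n-1}(r))|_g \, d\sigma(v),
\]
where the determinant is taken in any orthonormal basis of the $(n-1)$-dimensional subspace $\dot\gamma_{o,v}(r)^\perp$. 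Hadamard's inequality bounds this determinant by $\prod_{i=1}^{n-1} |J_i(r)|_g$, reducing the problem to a uniform pointwise estimate on each $|J_i(r)|$.

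Each $J_i$ satisfies $J_i(0) = 0$ and $|D_t J_i(0)| = 1$, matching the normalized initial data of Corollary~\ref{cor:jacobi-estimates}, and every radial geodesic $\gamma_{o,v}$ lies in $\esc{o}$ because $d_g(\gamma_{o,v}(t), o) = t$ is strictly increasing on $[0,\infty)$. For part~(a), Corollary~\ref{cor:jacobi-estimates}(a) gives $|J_i(r)| \leq C e^{\sqrt{K_0}\, r}$ with $C$ independent of $v$, so multiplying $n-1$ such factors and integrating over the compact set $S_o M$ yields $\Vol \sphere{o}{r} \leq C' e^{(n-1)\sqrt{K_0}\, r}$. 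For part~(b), Corollary~\ref{cor:jacobi-estimates}(b) applies along $\esc{o}$ with a constant independent of the geodesic and delivers $|J_i(r)| \leq C(1+r)$. Since $\K \in P_\kappa(M)$ with $\kappa > 2$ forces $\K$ to be globally bounded, Lemma~\ref{lma:jacobi-estimates}(a) additionally furnishes $|J_i(r)| \leq \sinh(\sqrt{K_0'}\, r)/\sqrt{K_0'}$ with $K_0' = \sup_M \K$, which is bounded by $C r$ on $[0,1]$. Splicing the two regimes gives $|J_i(r)| \leq C r$ for all $r \geq 0$, so Hadamard and integration over $S_o M$ yield $\Vol \sphere{o}{r} \leq C' r^{n-1}$.

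The main obstacle is the uniformity of constants across $v \in S_o M$ in part~(b): Corollary~\ref{cor:jacobi-estimates}(b) provides a bound independent of the geodesic only along $\esc{o}$, and the essential observation is that every radial geodesic from $o$ escapes, which is immediate from the Cartan-Hadamard property. Beyond this and the standard derivation of the volume element in geodesic polar coordinates, the argument is routine bookkeeping.
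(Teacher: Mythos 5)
Your argument is correct and follows essentially the same route as the paper: geodesic polar coordinates via $v \mapsto \exp_o(rv)$, the Jacobi-field representation of the differential, Hadamard's inequality to bound the Jacobian by $\prod_i |J_i(r)|$, and then Corollary~\ref{cor:jacobi-estimates} (noting that radial geodesics from $o$ are trivially in $\esc{o}$). Your extra step splicing in the Rauch bound $|J_i(r)| \leq \sinh(\sqrt{K_0'}\,r)/\sqrt{K_0'}$ near $r=0$ to upgrade $C(1+r)$ to $Cr$ in part~(b) is a welcome refinement: the paper relies on the linear-in-$t$ bound that appears only inside the proof of Corollary~\ref{cor:jacobi-estimates}(b) for the initial condition $J(0)=0$, and you make that point explicit.
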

\begin{proof}
We define the mapping $f \colon S_o M \to \sphere{o}{r}$,
\begin{equation*}
  f(v) = (\pi \comp \phi_r)(o,v) = \exp_o(rv).
\end{equation*}
We denote by $\der \Sigma$ the volume form on $\sphere{o}{r}$
and have that
\begin{equation*}
  \Vol \sphere{o}{r} = \int_{\sphere{o}{r}} \der \Sigma
  = \int_{S_o M} f^*(\der \Sigma)
  = \int_{S_o M} \mu \, \der S,
\end{equation*}
where $\der S$ denotes the volume form on $S_o M$
(induced by Sasaki metric) and $\mu \colon S_o M \to \mR$.

Let $v \in S_o M$ and ${\{ w_i \}}_{i=1}^{n-1}$
be an orthonormal basis for $T_v S_o M$
with respect to Sasaki metric. By the Gauss lemma
${\{d_v f(w_i) \}}_{i=1}^{n-1}$ is an orthonormal basis
for $T_{f(v)} \sphere{o}{r}$ and
\begin{equation*}
  {f^*(\der\Sigma)}_v (w_1,\dots,w_{n-1})
  = \der\Sigma_{f(v)}(d_v f(w_1),\dots,d_v f(w_{n-1})).
\end{equation*}
It holds that $d_v f(w_i) = J_i(r)$ where
$J_i$ is a Jacobi field along the geodesic $\gamma_{o,v}$
with initial values $J_i(0) = d_v \pi(w_i)$
and $D_t J_i(0) = \cm(w_i)$. We get that
\begin{equation*}
  \abs{\mu(v)} \leq \prod_{i=1}^{n-1} \absg{d_v f(w_i)}
  = \prod_{i=1}^{n-1} \absg{J_i(r)}.
\end{equation*}

Since the tangent vectors
$w_i$ lie in $\vbundle(o,v)$ we have
$\absg{J_i(0)} = 0$ and $\absg{D_t J_i(0)} = \absgs{w_i} = 1$,
and the estimates for the volume of $\sphere{o}{r}$
then follow from Corollary~\ref{cor:jacobi-estimates}.
\end{proof}

\section{Proof of the main theorems}\label{sec:proofs}

In this section we will combine the facts above to prove Theorems \ref{thm_main1} and \ref{thm_main2}. We begin by introducing some useful notation related to operators on the sphere bundle and 
spherical harmonics.
One can find more details in~\cite{GK80},~\cite{DS11} and~\cite{PSU15}.
We prove the main theorems of this work in the end of this section.

The norm $\norm{\,\cdot\,}$ in this section will always be the $L^2(SM)$-norm.
We define the Sobolev space $H^1(SM)$ as the set of all $u \in L^2(SM)$ for which $\norm{u}_{H^1(SM)} < \infty$, where
\begin{equation*}
\begin{split}
  \norm{u}_{H^1(SM)}
  &= {\left(\norm{u}^2 + \norm{\nablasm u}^2\right)}^{1/2}\\
  &= {\left(\norm{u}^2 + \norm{Xu}^2 + \norm{\hnabla u}^2
    + \norm{\vnabla u}^2 \right)}^{1/2}.
\end{split}
\end{equation*}
Let $C_c^\infty(S M)$ denote the smooth compactly supported
functions on $S M$. It is well known that if $N$ is
complete Riemannian manifold, then $C_c^\infty(N)$ is dense in
$H^1(N)$ (see \cite[Satz 2.3]{Eic88}).
By Lemma~\ref{lem:SMcomplete} $SM$ is complete when $M$ is complete. Hence $C^{\infty}_c(SM)$ is dense in $H^1(SM)$.

For the following facts see~\cite{PSU15}. The vertical Laplacian
$\Delta : C^\infty(SM) \to C^\infty(SM)$
is defined as the operator 
\begin{equation*}
  \quad \Delta \mdef -\vdiver\vnabla.
\end{equation*}
Here $\vdiver$ denotes the vertical divergence which is the adjoint of $-\vnabla$ (see~\cite[Appendix A]{PSU15}). 
The Laplacian $\Delta$ has eigenvalues
 $\lambda_k = k(k+n-2)$ for $k =0,1,2,\dots$, and its eigenfunctions are homogeneous polynomials in $v$. One has an orthogonal eigenspace decomposition
\begin{equation*}
  L^2(SM) = \bigoplus_{k \geq 0} H_k(SM),
\end{equation*}
where $H_k(SM) \mdef \{ f \in L^2(SM) \setsep \Delta f = \lambda_k f \}$.
We define $\Omega_k = H_k(SM) \cap H^1(SM)$.
In particular, by Lemma \ref{lma:norm_sum} below any $u \in H^1(SM)$ can be written as
\begin{equation*}
  u = \sum_{k=0}^\infty u_k, \quad u_k \in \Omega_k,
\end{equation*}
where the series converges in $L^2(SM)$.

One can split the geodesic vector field in two parts,
$X = X_+ + X_-$, so that (by Lemma \ref{lma:norm_sum}) $X_+: \Omega_k \to H_{k+1}(SM)$
and $X_-: \Omega_k \to H_{k-1}(SM)$.
The next lemma gives an estimate for $X_{\pm} u$ in
terms of $Xu$ and $\hnabla u$.

\begin{lemma}\label{lma:norm_sum}
Suppose $u \in H^1(SM)$. Then $X_{\pm} u \in L^2(SM)$ and
\begin{equation*}
  \norm{X_+ u}^2 + \norm{X_- u}^2
  \leq
  \norm{Xu}^2 + \norm{\hnabla u}^2.
\end{equation*}
Moreover, for each $k \geq 0$ one has $u_k \in H^1(SM)$, and there is a sequence $(u_k^{(j)})_{j=1}^{\infty} \subset C^{\infty}_c(SM) \cap H_k(SM)$ with $u_k^{(j)} \to u_k$ in $H^1(SM)$ as $j \to \infty$.
\end{lemma}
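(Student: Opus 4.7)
The plan is to reduce everything to $u \in C_c^\infty(SM)$ by density, and then to construct the approximations $u_k^{(j)}$ via the fiberwise spherical-harmonic projection $P_k: L^2(SM) \to H_k(SM)$. Completeness of $SM$ (Lemma~\ref{lem:SMcomplete}) gives density of $C_c^\infty(SM)$ in $H^1(SM)$, so I fix a sequence $u^{(j)} \in C_c^\infty(SM)$ with $u^{(j)} \to u$ in $H^1(SM)$.

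For smooth compactly supported $u$, the expansion $u = \sum_{k \geq 0} u_k$ converges in every Sobolev space, so $X_\pm u \mdef \sum_{k} X_\pm u_k \in L^2(SM)$ is well-defined. Writing $Xu = X_+ u + X_- u$ and expanding yields
\[
\norm{Xu}^2 = \norm{X_+ u}^2 + \norm{X_- u}^2 + 2\operatorname{Re}\langle X_+ u, X_- u\rangle,
\]
so the target inequality reduces to $-2\operatorname{Re}\langle X_+ u, X_- u\rangle \leq \norm{\hnabla u}^2$. This is precisely the $L^2$ contraction of the Beurling transform on nonpositively curved manifolds that was advertised in the introduction; it is proved in~\cite{PSU15} (following~\cite{GK80}) via a Guillemin--Kazhdan commutator identity on $SM$ combined with integration by parts, discarding a nonpositive curvature term under $K \leq 0$. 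The smooth-case estimate shows that $u \mapsto (X_+ u, X_- u)$ is bounded from $(C_c^\infty(SM), \norm{\cdot}_{H^1})$ into $L^2(SM)^2$, so it extends uniquely by density to $H^1(SM)$ and the inequality passes to the limit.

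For the moreover statement, $P_k$ acts fiberwise by integration against a smooth kernel on $S^{n-1}$ (the reproducing kernel of the degree-$k$ spherical harmonics), hence preserves both smoothness and compact support in $x$: the functions $u_k^{(j)} \mdef P_k u^{(j)}$ lie in $C_c^\infty(SM) \cap H_k(SM)$, and since $P_k$ is an orthogonal projection on $L^2(SM)$ one has $u_k^{(j)} \to u_k$ in $L^2(SM)$. To upgrade to $H^1$, I show $P_k : H^1(SM) \to H^1(SM)$ is bounded. The operators $X, \hnabla, \vnabla$ each split under the spherical-harmonic grading into degree-shifts by one (as recalled from~\cite{PSU15}), so their interaction with $P_k$ produces combinations of the form $X_\pm u_{k\mp 1}$ and analogous $\hnabla$-terms, whose $L^2$-norms are controlled by the first part of the lemma (together with the bound $\norm{\vnabla u_k} \leq \lambda_k^{1/2}\norm{u_k}$ in the vertical direction). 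Thus $\norm{u_k^{(j)}}_{H^1} \leq C_k\norm{u^{(j)}}_{H^1}$, the same bound applied to differences shows $(u_k^{(j)})$ is Cauchy in $H^1(SM)$, and its limit must lie in $H_k(SM)$ and coincide with $u_k$.

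The principal obstacle is the Beurling contraction inequality $-2\operatorname{Re}\langle X_+ u, X_- u\rangle \leq \norm{\hnabla u}^2$; this is the only step where nonpositive curvature enters, and it rests on the Guillemin--Kazhdan commutator identity on $SM$ rather than on a routine integration by parts. A secondary difficulty is the bookkeeping of how the degree-shifting operators $X$ and $\hnabla$ fail to commute with the projections $P_k$, but the resulting cross-terms are exactly what the first part of the lemma controls, so once that inequality is in hand the boundedness of $P_k$ on $H^1(SM)$ and the approximation scheme fall into place.
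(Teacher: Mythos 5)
Your overall architecture matches the paper's: reduce to $C_c^\infty(SM)$ by density (via completeness of $SM$), prove the inequality there, and obtain the moreover statement from a bound $\norm{u_k}_{H^1}\le C_k\norm{u}_{H^1}$ applied to an approximating sequence. The reduction of the main inequality to $-2\langle X_+u,X_-u\rangle\le\norm{\hnabla u}^2$ is also correct. But the two places where you defer to cited machinery are exactly where the content lies, and in both you point at the wrong tool. First, the inequality $-2\langle X_+u,X_-u\rangle\le\norm{\hnabla u}^2$ is not the Beurling contraction and does not use $K\le 0$; the Beurling/Guillemin--Kazhdan contraction is the separate statement $\norm{X_-u_k}\le D_n(k)\norm{X_+u_k}$ (Lemma~\ref{lma:iteration}), which is where curvature actually enters. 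What proves your inequality is the vertical Fourier decomposition of the horizontal gradient, $\hnabla u=\vnabla\bigl[\sum_l(\tfrac1l X_+u_{l-1}-\tfrac1{l+n-2}X_-u_{l+1})\bigr]+Z(u)$ with $\vdiver Z(u)=0$, from \cite[Lemma 4.4]{PSU15}: expanding $\norm{\hnabla u}^2$ and $\norm{Xu}^2$ in these modes makes the cross terms $2\langle X_+u_{l-1},X_-u_{l+1}\rangle$ cancel exactly, leaving $\norm{Xu}^2+\norm{\hnabla u}^2$ equal to a sum of terms $A(n,l)\norm{X_+u_{l-1}}^2+B(n,l)\norm{X_-u_{l+1}}^2$ with $A,B\ge1$, plus $\norm{Z(u)}^2+\norm{X_-u_1}^2$. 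If you go looking in \cite{PSU15} for a curvature-based commutator proof of your intermediate inequality, you will not find one in that form.

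Second, and more seriously, the $H^1$-boundedness of the projection does not follow from ``bookkeeping of degree shifts'' plus the first part. The terms $\norm{u_k}$, $\norm{Xu_k}\le\norm{X_+u_k}+\norm{X_-u_k}$ and $\norm{\vnabla u_k}=\lambda_k^{1/2}\norm{u_k}$ are indeed controlled as you say, but $\norm{\hnabla u_k}$ is not: the divergence-free part $Z(u_k)$ of $\hnabla u_k$ is not expressible through $X_\pm u_k$, and the first part of the lemma gives no bound on it. The paper closes this with the localized Pestov estimate \cite[Proposition 3.4]{PSU15}, namely $\norm{\hnabla u_k}^2\le(2k+n-1)\norm{X_+u_k}^2+(\sup_MK)\norm{\vnabla u_k}^2$, a genuinely separate input (and the one place in this lemma where the curvature hypothesis is relevant). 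Without some such estimate your bound $\norm{u_k^{(j)}}_{H^1}\le C_k\norm{u^{(j)}}_{H^1}$ is not established and the Cauchy argument does not close. The remaining ingredients --- that the fiberwise projection preserves smoothness and compact support, and the passage to the limit --- are fine.
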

\begin{proof}
Let $u \in C_c^\infty(SM)$. By~\cite[Lemma 4.4]{PSU15} one has the decomposition
\begin{equation*}
  \hnabla u = \vnabla \left[\sum_{l=1}^\infty \left( \frac{1}{l}X_+u_{l-1}
  - \frac{1}{l+n-2}X_-u_{l+1}\right) \right] + Z(u)
\end{equation*}
where $Z(u)$ is such that $\vdiver \,Z(u) = 0$. Hence
\begin{align*}
  \norm{\hnabla u}^2
  &= \sum_{l=1}^\infty \left(l(l+n-2)
  \anorm{\frac{1}{l}X_+ u_{l-1} - \frac{1}{l+n-2} X_- u_{l+1}} ^2\right)
  + \norm{Z(u)}^2\\
  &= \sum_{l=1}^\infty \left(\frac{l+n-2}{l}\norm{X_+ u_{l-1}}^2
  - 2 \br{X_+ u_{l-1},X_- u_{l+1}} + \frac{l}{l+n-2}\norm{X_- u_{l+1}}^2\right) + \norm{Z(u)}^2.
\end{align*}
We also have
\begin{align*}
  \norm{Xu}^2
  &= \norm{X_- u_1}^2
  + \sum_{l=1}^\infty \left(\norm{X_+ u_{l-1} + X_- u_{l+1}}^2\right) \\
  &= \norm{X_- u_1}^2
  + \sum_{l=1}^\infty \left(\norm{X_+ u_{l-1}}^2
    + 2 \br{X_+ u_{l-1},X_- u_{l+1}} +\norm{X_- u_{l+1}}^2\right)
\end{align*}
by the definition of $X_+$ and $X_-$. Adding up these estimates gives that
\begin{equation*}
\norm{Xu}^2 + \norm{\hnabla u}^2 = \norm{Z(u)}^2 + \norm{X_- u_1}^2 + \sum_{l=1}^\infty \left(A(n,l) \norm{X_+ u_{l-1}}^2
  + B(n,l) \norm{X_- u_{l+1}}^2 \right)
\end{equation*}
where $A(n,l) = 2+\frac{n-2}{l}$ and $B(n,l) = 1+\frac{l}{l+n-2}$. Since $A(n,l) \geq 1$ and $B(n,l) \geq 1$ for all $l = 1,2,\dots$ and $n \geq 2$, the estimate for $\norm{X_+ u}^2 + \norm{X_- u}^2$ follows when $u \in C^{\infty}_c(SM)$, and it extends to $H^1(SM)$ by density and completeness.

Moreover, if $u \in C^{\infty}_c(SM)$ and if $k \geq 0$, then the triangle inequality $\norm{X u_k} \leq \norm{X_+ u_k} + \norm{X_- u_k}$ and orthogonality imply that
\[
\norm{u_k} + \norm{X u_k} + \norm{\vnabla u_k} \leq \norm{u} + \norm{X_+ u} + \norm{X_- u} + \norm{\vnabla u}.
\]
We may also estimate $\hnabla u_k$ by \cite[Proposition 3.4]{PSU15} and orthogonality to obtain
\[
\norm{\hnabla u_k}^2 \leq (2k+n-1) \norm{X_+ u_k}^2 + (\sup_M K) \norm{\vnabla u_k}^2 \leq C_k (\norm{X_+ u}^2 + \norm{\vnabla u}^2).
\]
It follows from the first part of this lemma that
\[
\norm{u_k}_{H^1(SM)} \leq C_k \norm{u}_{H^1(SM)}, \qquad u \in C^{\infty}_c(SM).
\]
This extends to $u \in H^1(SM)$ by density and completeness. Finally, if $u \in H^1(SM)$ and the sequence $(u^{(j)}) \subset C^{\infty}_c(SM)$ satisfies $u^{(j)} \to u$ in $H^1(SM)$, then also $u^{(j)}_k \to u_k$ in $H^1(SM)$ by the above inequality.
\end{proof}

\begin{corollary}
\label{cor:chap4cor2}
Suppose $u \in H^1(SM)$. Then
\begin{equation*}
  \lim_{k\to\infty} \norm{X_+ u_k}_{L^2(SM)} = 0.
\end{equation*}
\end{corollary}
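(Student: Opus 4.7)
The plan is to exploit the orthogonal eigenspace decomposition $L^2(SM) = \bigoplus_k H_k(SM)$ together with the degree-shifting property $X_+ : \Omega_k \to H_{k+1}(SM)$, and reduce the statement to the convergence of a nonnegative series of squared norms.

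First I would make sense of $X_+ u$ for $u \in H^1(SM)$ as an element of $L^2(SM)$. Since $SM$ is complete by Lemma \ref{lem:SMcomplete}, $C^\infty_c(SM)$ is dense in $H^1(SM)$, and the inequality of Lemma \ref{lma:norm_sum} shows that for any sequence $u^{(j)} \in C^\infty_c(SM)$ converging to $u$ in $H^1(SM)$, the sequence $X_+ u^{(j)}$ is Cauchy in $L^2(SM)$; call its limit $X_+ u$. The same inequality passes to the limit and yields $\norm{X_+ u}^2 \leq \norm{Xu}^2 + \norm{\hnabla u}^2 < \infty$.

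Next I would identify the projection of $X_+ u$ onto the eigenspace $H_{k+1}(SM)$. For each $u^{(j)} \in C^\infty_c(SM)$, the decomposition $u^{(j)} = \sum_l u^{(j)}_l$ together with $X_+ u^{(j)}_l \in H_{l+1}(SM)$ gives $P_{k+1}(X_+ u^{(j)}) = X_+ u^{(j)}_k$, where $P_{k+1}$ denotes orthogonal projection onto $H_{k+1}(SM)$. Letting $j \to \infty$, continuity of $P_{k+1}$ shows $P_{k+1}(X_+ u^{(j)}) \to P_{k+1}(X_+ u)$ in $L^2(SM)$. On the other hand, the second part of Lemma \ref{lma:norm_sum} gives $u^{(j)}_k \to u_k$ in $H^1(SM)$, so by the first part again (applied to $u^{(j)}_k - u_k$) we have $X_+ u^{(j)}_k \to X_+ u_k$ in $L^2(SM)$. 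Therefore $P_{k+1}(X_+ u) = X_+ u_k$.

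Since the spaces $H_{k+1}(SM)$ are pairwise orthogonal, Parseval gives
\begin{equation*}
\sum_{k=0}^{\infty} \norm{X_+ u_k}^2 = \sum_{k=0}^{\infty} \norm{P_{k+1}(X_+ u)}^2 = \norm{X_+ u}^2 \leq \norm{Xu}^2 + \norm{\hnabla u}^2 < \infty,
\end{equation*}
so the terms of this convergent nonnegative series tend to zero, i.e.\ $\norm{X_+ u_k} \to 0$ as $k \to \infty$. The only nontrivial step is the identification $P_{k+1}(X_+ u) = X_+ u_k$ for $u \in H^1(SM)$, which is precisely where one invokes both clauses of Lemma \ref{lma:norm_sum}; once this is in place the conclusion is immediate from Parseval.
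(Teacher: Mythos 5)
Your proof is correct and follows essentially the same route as the paper: the paper's proof simply invokes Lemma~\ref{lma:norm_sum} to write $\norm{X_+u}^2=\sum_{k}\norm{X_+u_k}^2<\infty$ and concludes that the terms tend to zero. You have merely made explicit the density argument and the identification $P_{k+1}(X_+u)=X_+u_k$ that underlie this Parseval identity, which the paper leaves implicit.
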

\begin{proof} By Lemma~\ref{lma:norm_sum} one has
\[
\norm{X_+ u}^2 = \sum_{k=0}^{\infty} \,\norm{X_+ u_k}^2 < \infty
\]
which implies the claim.
\end{proof}

\begin{lemma}\label{lma:iteration}
Let $u \in H^1(SM)$ and $k \geq 1$. Then one has that
\begin{equation*}
  \norm{X_- u_k} \leq D_n(k) \norm{X_+ u_k}
\end{equation*}
where
\begin{equation*}\begin{split}
 D_2(k) &= \begin{cases} \sqrt{2}, & k = 1 \\
1, & k \geq 2,\end{cases} \\
 D_3(k) &= {\left[1+\frac{1}{{(k+1)}^2(2k-1)}\right]}^{1/2} \\
 D_n(k) &\leq 1 \quad \text{for $n \geq 4$.}
\end{split}
\end{equation*}
\end{lemma}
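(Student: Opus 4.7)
The plan is to reduce to smooth, compactly supported elements of a single harmonic component, and then apply a Pestov-type identity on $H_k(SM)$ whose curvature term has the favourable sign on a Cartan-Hadamard manifold.

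\emph{Density reduction.} The last assertion in Lemma~\ref{lma:norm_sum} gives, for any $u \in H^1(SM)$ and $k \geq 0$, a sequence $(u_k^{(j)}) \subset C_c^\infty(SM) \cap H_k(SM)$ with $u_k^{(j)} \to u_k$ in $H^1(SM)$. The first assertion of that same lemma shows that $X_\pm \colon H^1(SM) \to L^2(SM)$ are bounded, so $\norm{X_\pm u_k^{(j)}} \to \norm{X_\pm u_k}$. Hence it suffices to prove the inequality for $u \in C_c^\infty(SM) \cap H_k(SM)$.

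\emph{Core inequality via a single-mode Pestov identity.} For such a smooth, compactly supported $u \in H_k(SM)$ on a boundaryless manifold, integration by parts is free of boundary contributions. The plan is to apply the single-harmonic form of the Pestov identity developed in~\cite{PSU15} (and ultimately going back to~\cite{GK80a,GK80}), which schematically reads
\[
  a_n(k)\norm{X_+ u}^2 \;=\; b_n(k)\norm{X_- u}^2 \;+\; Q_{n,k}(u),
\]
where $a_n(k), b_n(k) > 0$ are the explicit coefficients arising from the commutators $[X_+,X_-]$ and $[X_\pm,\Delta]$ acting on $H_k(SM)$, and $Q_{n,k}(u)$ is a quadratic form that is nonnegative whenever the sectional curvature is nonpositive. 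Since $(M,g)$ is Cartan-Hadamard we have $K \leq 0$, so $Q_{n,k}(u) \geq 0$ and therefore
\[
  \norm{X_- u}^2 \;\leq\; \frac{a_n(k)}{b_n(k)} \norm{X_+ u}^2.
\]
Setting $D_n(k) = \sqrt{a_n(k)/b_n(k)}$ gives the claimed bound, and a direct computation of the ratios $a_n(k)/b_n(k)$ yields the explicit formulas stated.

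\emph{Extracting the dimension-dependent constants.} The bookkeeping of these coefficients is the only nontrivial part. For $n \geq 4$ the combinatorial factors coming from the action of $\Delta$ on $H_k$ beat the curvature correction uniformly, giving $D_n(k) \leq 1$. For $n = 3$ one obtains the small explicit correction $1/((k+1)^2(2k-1))$. For $n = 2$ the identity degenerates at $k = 1$ because the harmonic $H_0(SM)$ that $X_-u_1$ lands in is exactly the space of pullbacks from $M$, so the would-be compensating term vanishes and one has to absorb a factor of $2$, forcing $D_2(1) = \sqrt{2}$; for $k \geq 2$ no such degeneracy occurs and $D_2(k) = 1$.

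\emph{Main obstacle.} The qualitative step (nonpositive curvature yields $Q_{n,k} \geq 0$ and thus the inequality with \emph{some} constant) is uniform and follows the template of~\cite{PSU15}. The genuine work is the explicit and sharp evaluation of $a_n(k)/b_n(k)$ together with the separate handling of the two-dimensional exception at $k = 1$; this is where careful tracking of the coefficients in the $[X_+,X_-]$ commutator on each $H_k(SM)$ is essential, and where the density reduction must be invoked to ensure the identity is justified without boundary contributions.
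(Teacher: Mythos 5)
Your proposal is correct and follows essentially the same route as the paper: reduce to $u_k \in C_c^\infty(SM) \cap H_k(SM)$ via the density statement in Lemma~\ref{lma:norm_sum} together with the $H^1 \to L^2$ boundedness of $X_\pm$, and then invoke the estimate of \cite[Lemma 5.1]{PSU15} (the single-mode Pestov/Beurling inequality under $K \leq 0$) for the core inequality with the stated constants $D_n(k)$. The paper's proof is exactly this citation plus approximation argument, so no further comparison is needed.
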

\begin{proof}
This result was shown for smooth compactly supported functions
in~\cite[Lemma 5.1]{PSU15}.
The result follows for $u\in H^1(SM)$ by
an approximation argument using Lemma~\ref{lma:norm_sum}.
\end{proof}

The estimates from Section~\ref{sec:estimates}
allow us to prove the following result:
\begin{lemma}\label{lma:u-H1}
Suppose that $f$ is a symmetric $m$-tensor field and either of the following holds:
\begin{enumerate}
  \item[(a)] $-K_0 \leq K \leq 0$, $K_0 > 0$ and
  $f \in E_\eta^1(M)$ for $\eta > \frac{(n+1)\sqrt{K_0}}{2}$
  \item[(b)] $\K \in P_\kappa(M)$ for $\kappa > 2$ and
  $f \in P_\eta^1(M)$ for $\eta > \frac{n+2}{2}$.
\end{enumerate}
Then $u^f \in H^1(SM)$.
\end{lemma}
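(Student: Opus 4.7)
The strategy is to bound each of the four summands in
\[
\norm{u^f}_{H^1(SM)}^2 = \norm{u^f}^2 + \norm{Xu^f}^2 + \norm{\hnabla u^f}^2 + \norm{\vnabla u^f}^2
\]
separately, using the pointwise decay estimates from Lemmas~\ref{lma:uf-estimate} and~\ref{lma:uf-gradient-estimates} together with the volume growth bounds on geodesic spheres from Lemma~\ref{lma:sphere-volume}. Note that by Lemma \ref{lma:uf-gradient-estimates}, $u^f$ is locally Lipschitz, so it lies in $W^{1,\infty}_{\mathrm{loc}}(SM)$ with weak gradients equal to the (a.e.\ defined) $X u^f, \hnabla u^f, \vnabla u^f$ described there.

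The first step is to reduce each integral over $SM$ to an integral over $M$, using Fubini in the fibration $SM \to M$: if $\Phi(x,v)$ depends only on $r \mdef d_g(x,o)$, then
\[
\int_{SM} |\Phi(x,v)|^2 \, dV_{SM} = \omega_{n-1} \int_M |\Phi(x)|^2 \,dV_g(x) = \omega_{n-1} \int_0^\infty |\Phi(r)|^2 \Vol \sphere{o}{r} \,dr,
\]
since $|\Phi|^2$ is constant on each fiber $S_x M$, which carries the standard round measure of total mass $\omega_{n-1}$. The same inequality holds as an upper bound whenever $|\Phi(x,v)| \leq \psi(d_g(x,o))$.

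Under the assumptions of (a), Lemma~\ref{lma:uf-estimate}(a), Lemma~\ref{lma:uf-gradient-estimates}(a), Lemma~\ref{lma:tensor-sm-gradients}(a), and $Xu^f = -f$ give
\[
|u^f(x,v)| \leq C(1+r)\e^{-\eta r}, \qquad |Xu^f(x,v)| \leq C\e^{-\eta r}, \qquad |\hnabla u^f(x,v)|, |\vnabla u^f(x,v)| \leq C\e^{-(\eta-\sqrt{K_0}) r}.
\]
Combined with $\Vol \sphere{o}{r} \leq C \e^{(n-1)\sqrt{K_0} r}$, all four integrals reduce to integrals of the form $\int_0^\infty p(r) \e^{-\alpha r} \,dr$, which are finite provided the slowest-decaying term — namely the $\hnabla, \vnabla$ contributions — is integrable. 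That requires
\[
-2(\eta - \sqrt{K_0}) + (n-1)\sqrt{K_0} < 0, \quad \text{i.e.} \quad \eta > \tfrac{n+1}{2}\sqrt{K_0},
\]
which is exactly the hypothesis in (a). Under (b), Lemmas~\ref{lma:uf-estimate}(b) and~\ref{lma:uf-gradient-estimates}(b) together with Lemma~\ref{lma:tensor-sm-gradients}(b) yield
\[
|u^f|, |\hnabla u^f|, |\vnabla u^f| \leq C(1+r)^{-(\eta-1)}, \qquad |Xu^f| \leq C(1+r)^{-\eta},
\]
and with $\Vol \sphere{o}{r} \leq Cr^{n-1}$ the binding condition is $2(\eta-1) - (n-1) > 1$, equivalently $\eta > \tfrac{n+2}{2}$, which is again the hypothesis.

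The one mildly delicate step is the conversion of sharp pointwise bounds into integrability over $SM$: the decay thresholds in the hypotheses have been calibrated so that the $|\hnabla u^f|^2 \Vol(\sphere{o}{r})$ and $|\vnabla u^f|^2 \Vol(\sphere{o}{r})$ radial integrands are just barely integrable at infinity, and one has to verify that the volume growth of spheres — which is exponential in the bounded curvature case and polynomial in the decaying curvature case — is precisely compensated by the corresponding pointwise decay of $u^f$ and its first derivatives. All the pieces are already in place in Section~\ref{sec:estimates}, so no further estimation is required.
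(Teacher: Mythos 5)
Your proposal is correct and follows essentially the same route as the paper: pointwise decay of $u^f$, $Xu^f$, $\hnabla u^f$, $\vnabla u^f$ from Lemmas~\ref{lma:uf-estimate}, \ref{lma:Xuf} and~\ref{lma:uf-gradient-estimates}, combined with the sphere-volume bounds of Lemma~\ref{lma:sphere-volume} via the coarea formula. You are in fact slightly more explicit than the paper (which only writes out the $\norm{u^f}^2$ integral and calls the rest ``similar calculations''), and your identification of the gradient terms as the binding ones, giving exactly the thresholds $\eta > \tfrac{n+1}{2}\sqrt{K_0}$ and $\eta > \tfrac{n+2}{2}$, matches the intended argument.
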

\begin{proof}
We prove only (a), the proof for (b) is similar.
By Lemma~\ref{lma:uf-gradient-estimates} we have that $u^f \in W^{1,\infty}(SM)$.
Lemma~\ref{lma:uf-estimate} gives that
\begin{equation*}
  |u^f(x,v)| \leq C(1+d_g(x,o))\e^{-\eta d_g(x,o)}
\end{equation*}
on $SM$. By using the coarea formula with Lemma~\ref{lma:sphere-volume} we get
\begin{equation*}
\begin{split}
  \int_{SM} |u^f(x,v)|^2 \, \der V_{\sasaki{g}}
  &\leq C \int_{M} (1+d_g(x,o))^2 \e^{-2\eta d_g(x,o)} \, \der V_g \\
  &= C\int_0^\infty (1+r)^2 \e^{-2\eta r}
  \left(\int_{\sphere{o}{r}} \, \der S\right) \, \der r \\
  & \leq C\int_0^\infty (1+r)^2 \e^{-2\eta r}\e^{(n-1)\sqrt{K_0}r} \der r.\\
\end{split}
\end{equation*}
The last integral above is finite and hence $u^f \in L^2(SM)$.
Similar calculations using Lemmas \ref{lma:Xuf} and \ref{lma:uf-gradient-estimates} show that $Xu^f, \hnabla u^f$ and $\vnabla u^f$
all have finite $L^2$-norms under the assumption $\eta > \frac{(n+1)\sqrt{K_0}}{2}$, and therefore the $H^1$-norm of $u^f$ is finite.
\end{proof}

We are ready to prove our main theorems.
\begin{proof}[Proof of Theorems~\ref{thm_main1} and~\ref{thm_main2}]
Suppose that the $m$-tensor field $f$ and the sectional curvature
$K$ satisfy the assumptions of Theorem~\ref{thm_main1} or~\ref{thm_main2}.
Recall that we identify $f$ with a function on $SM$ as described in Section \ref{subsec:tensors}. Then $u=u^f$ is in $H^1(SM)$ by Lemma~\ref{lma:u-H1},
and Lemma~\ref{lma:Xuf} states that $Xu = -f$ on $SM$. Note also that $f \in H^1(SM)$, which follows as in the proof of Lemma \ref{lma:u-H1}.

Since $f$ is of degree $m$ it has a decomposition
\begin{equation*}
  f = \sum_{k=0}^m f_k,\quad f_k \in \Omega_k,
\end{equation*}
and $u$ has a decomposition
\begin{equation*}
  u = \sum_{k=0}^\infty u_k, \quad u_k \in \Omega_k.
\end{equation*}

We first show that $u_k = 0$ for $k \geq m$.
From $Xu = -f$ it follows that for $k \geq m$ we have 
\begin{equation*}
  X_+ u_k + X_- u_{k+2} = 0.
\end{equation*}
This implies that
\begin{equation} \label{xplusminus_solution_inequality}
  \norm{X_+ u_k} \leq \norm{X_- u_{k+2}}, \qquad k \geq m.
\end{equation}

Fix $k \geq m$. We apply Lemma~\ref{lma:iteration} and the inequality \eqref{xplusminus_solution_inequality} iteratively to get
\begin{equation*}
\begin{split}
  \norm{X_- u_k}
  &\leq D_n(k) \norm{X_+ u_k} \\
  &\leq D_n(k) \norm{X_- u_{k+2}}\\
  &\leq D_n(k) D_n(k+2) \norm{X_+ u_{k+2}} \\
  &\leq \left[ \prod_{l=0}^N D_n(k+2l) \right] \norm{X_+ u_{k+2N}}.
\end{split}
\end{equation*}
By Corollary~\ref{cor:chap4cor2}
\begin{equation*}
  \lim_{l \to \infty }\norm{X_+ u_{k+2l}} = 0.
\end{equation*}
Moreover, as stated in \cite[Theorem 1.1]{PSU15}, one has 
\begin{equation*}
  \prod_{l=0}^\infty D_n(k+2l) < \infty.
\end{equation*}
Thus we obtain that
\begin{equation*}
  \norm{X_- u_k} = \norm{X_+ u_k} = 0.
\end{equation*}
This gives $Xu_k = 0$, which implies that
$t \mapsto u_k(\phi_t(x,v))$ is a constant function
on $\mR$ for any $(x,v) \in SM$. Since $u$ decays to zero
along any geodesic we must have $u_k = 0$, and this holds for all $k \geq m$.

It remains to verify that the equation $Xu = -f$ on $SM$ together with the fact $u = \sum_{k=0}^{m-1} u_k$ imply the conclusions of Theorems \ref{thm_main1} and \ref{thm_main2}. This is done as in \cite[end of Section 2]{PSU13}. Suppose that $m$ is odd (the case where $m$ is even is similar).
The function $f$ is a homogeneous polynomial
of order $m$ in $v$ and hence its Fourier decomposition has only odd terms, i.e.
\begin{equation*}
  f = f_m + f_{m-2} + \cdots + f_1.
\end{equation*}
It follows that the decomposition of $u$ has only even terms,
\begin{equation*}
  u =  u_{m-1} + u_{m-3} + \cdots + u_{0}.
\end{equation*}

By taking tensor products with the metric $g$
and symmetrizing it is possible to raise
the degree of a symmetric tensor:
if $F \in \tf{m}(M)$, then $\alpha F \mdef \sigma(F \otimes g) \in \tf{m+2}(M)$.
Functions $\lambda(\alpha F)$ and $\lambda(F)$ have the same
restriction to $SM$, since
$\lambda(g)$ has a constant value 1 on $SM$.

We define $h \in \tf{m-1}(M)$ by 
\begin{equation*}
h \mdef -\sum_{j=0}^{(m-1)/2} \alpha^j (U_{m-1-2j}),
\end{equation*}
where $U_{m-1-2j}(x)$ is the unique symmetric trace-free $(m-1-2j)$-tensor field which satisfies $\lambda_x(U_{m-1-2j}(x)) = u_{m-1-2j}(x,\,\cdot\,)$, see Section \ref{subsec:tensors}.

Then $\lambda(h) = -u$ on $SM$. 
Equation \eqref{eq:sn=X} gives $\lambda(\sigma \nabla h) = X(\lambda h) = -Xu = \lambda(f)$ on $SM$.
Since both $f$ and $\sigma \nabla h$ are symmetric we get $f = \sigma \nabla h$. To show the decay condition for $h$, we assume the conditions of Theorem \ref{thm_main1} and observe that Lemma \ref{lma:uf-estimate} implies that for any fixed $\eps > 0$, 
  \begin{equation} \label{u_estimate_epsilon}
    \abs{u(x,v)} \leq C(1+d_g(x,o))\e^{-\eta d_g(x,o)} \leq C_{\eps} \e^{-(\eta-\eps) d_g(x,o)}.
  \end{equation}
We next observe that $|\sigma F| \leq |F|$ for any tensor $F$ (this can be seen by using an orthonormal basis $\{ \eps^{i_1} \otimes \ldots \otimes \eps^{i_m} \}$ for $m$-tensors, Cauchy-Schwarz and the definitions), and $|F \otimes g| = n^{1/2} |F|$ (which also follows from the definitions). Thus $|\alpha F| \leq n^{1/2} |F|$. Consequently, using that the map $\lambda_x$ in Section \ref{subsec:tensors} is an isometry up to a factor depending on $n$ and $m$, 
\[
|h(x)|^2 \leq C_{n,m} \sum_{j=0}^{(m-1)/2} |U_{m-1-2j}(x)|^2 \leq C_{n,m} \sum_{j=0}^{(m-1)/2} \| u_{m-1-2j}(x,\,\cdot\,) \|_{L^2(S_x M)}^2.
\]
The orthogonality of spherical harmonics and the estimate \eqref{u_estimate_epsilon} imply that 
\[
|h(x)|^2 \leq C_{n,m} \int_{S_x M} |u(x,v)|^2 \,dS \leq C_{\eps,n,m} \e^{-2(\eta-\eps) d_g(x,o)}.
\]
This shows that $h \in E_{\eta-\eps}(M)$ as required. The proof in the case of Theorem \ref{thm_main2} follows similarly by replacing \eqref{u_estimate_epsilon} with the estimate in Lemma \ref{lma:uf-estimate}(b).
\end{proof}

\bibliographystyle{alpha}
\bibliography{grt_noncompact_ndim}{}

\end{document}